\newcommand{\R}{\mathbb{R}}
\newcommand{\e}{\mathrm{e}}
\newcommand{\Id}{\mathrm{Id}}
\newcommand{\indicator}{\mathbbm{1}}
\newcommand{\supp}{\mathop{\mathrm{supp}}}
\newcommand{\arcosh}{\mathop{\mathrm{arcosh}}}
\newcommand{\rank}{\mathop{\mathrm{rank}}}
\newcommand{\Hess}{\mathop{\mathrm{Hess}}}
\newcommand{\Z}{\mathbb{Z}}
\newcommand{\fm}{\mathfrak{m}}
\newcommand{\0}{\mathbf{0}}
\newcommand{\CK}{\mathscr{A}}
\newcommand{\CE}{\mathscr{E}}
\newcommand{\CR}{\mathfrak{R}}
\newcommand{\bc}{{ \mathcal{H} }}
\newcommand{\fc}{\mathfrak{w}}
\newcommand{\J}{\mathfrak{J}}
\newcommand{\F}{\mathscr{F}}
\newcommand{\fe}{\mathfrak{E}}
\newcommand{\fa}{\mathfrak{a}}
\newcommand{\CC}{\mathbb{C}}
\renewcommand{\Re}{\mathop{\mathrm{Re}}}
\renewcommand{\SS}{\mathbb{S}}
\renewcommand{\H}{\mathbb{H}}
\renewcommand{\i}{\mathrm{i}}
\renewcommand{\d}{\mathrm{d}}
\numberwithin{equation}{section}
\numberwithin{figure}{section}
\newtheorem{thm}{Theorem}
\newtheorem{lem}[thm]{Lemma}
\newtheorem{prop}[thm]{Proposition}
\theoremstyle{definition}
\theoremstyle{remark}
\newtheorem{rk}[thm]{Remark}
\numberwithin{thm}{section}
\title{The Spherical Maximal Operators  on Hyperbolic Spaces }
\author[P. Chen, M. Shen, Y. Wang, L. Yan]{Peng Chen, Minxing Shen, Yunxiang Wang, Lixin Yan}
\address{Peng Chen, Department of Mathematics, Sun Yat-sen University, Guangzhou, 510275, P. R. China}
\email{\href{mailto: chenpeng3@mail.sysu.edu.cn}{chenpeng3@mail.sysu.edu.cn}}
\address{Mingxing Shen, Chern Institute of Mathematics, Nankai University, Tianjin, 300071, P. R. China}
\email{\href{mailto: 9820240068@nankai.edu.cn}{9820240068@nankai.edu.cn}}
\address{Yunxiang Wang, Department of Mathematics, Sun Yat-sen
University, Guangzhou, 510275, P. R. China}
\email{\href{mailto: wangyx93@mail2.sysu.edu.cn}{wangyx93@mail2.sysu.edu.cn}}
\address{Lixin Yan, Department of Mathematics, Sun Yat-sen University, Guangzhou, 510275, P. R. China}
\email{\href{mailto: mcsylx@mail.sysu.edu.cn}{mcsylx@mail.sysu.edu.cn}}
\date{\today}
\subjclass{42B25, 22E30, 35S30, 43A90}
\keywords{Spherical  maximal     operators,    hyperbolic spaces, Fourier integral operators,  Fourier multipliers, local smoothing.}
\begin{document}

\begin{abstract}
In this article we investigate $L^p$ boundedness of  the spherical maximal operator $\mathfrak{m}^\alpha$ of (complex) order $\alpha$ on the $n$-dimensional hyperbolic space $\mathbb{H}^n$, which was introduced and studied by El Kohen. We prove that when $n\geq 2$, for $\alpha\in\mathbb{R}$ and $1<p<\infty$, if $\mathfrak{m}^\alpha$ is bounded on $L^p(\mathbb{H}^n)$, then we must have $\alpha>1-n+n/p$ for $1<p\leq 2$; or $\alpha\geq \max\{1/p-(n-1)/2,(1-n)/p\}$ for $2<p<\infty$. Furthermore, we improve El Kohen's result [J. Operator Theory 3 (1980)] on $L^p$ boundedness of $\mathfrak{m}^\alpha$ by showing that $\mathfrak{m}^\alpha$ is bounded on $L^p(\mathbb{H}^n)$ provided that
$\mathop{\mathrm{Re}}\alpha> \max \{{(2-n)/p}-{1/(p p_n)},{(2-n)/p}- (p-2)/[p p_n(p_n-2)]\}
$ for $2\leq p\leq \infty$, with $p_n=2(n+1)/(n-1)$ for $n\geq 3$ and $p_n=4$ for $n=2$.
\end{abstract}
\maketitle


\section{Introduction}\label{sec:1}
\setcounter{equation}{0}

\subsection{Background}
The spherical maximal operator of (complex) order $\alpha$ on ${\mathbb R^n}$ was introduced in 1976 by Stein \cite{St2},  which is defined by
\begin{eqnarray}\label{e1.1}
{\frak M}^\alpha f(x)= \sup_{t>0}
|{\frak M}^\alpha_tf (x) |,
\end{eqnarray}
where ${\frak M}^\alpha_t$ is the spherical operator given by
\begin{eqnarray}\label{e1.2}
{\frak M}^\alpha_t f (x) =
{1\over   \Gamma(\alpha)  }  \int_{\{|y|\leq 1\}} \left(1-{|y|^2 }\right)^{\alpha -1} f(x-ty)\,\d y.
\end{eqnarray}
In the  case $\alpha=1$,  ${\frak M}^\alpha$ corresponds to the Hardy-Littlewood maximal operator and in the case  $\alpha=0$, one  recovers the spherical maximal operator  ${\frak M}  f(x)= \sup_{t>0}
|{\frak M}_tf (x) |$ in which
\begin{eqnarray}\label{e1.3}
{\frak M}_tf (x) = {1\over\omega_{n-1}} \int_{{\mathbb S}^{n-1}}   f(x-t\omega) \,\text{d}\omega, \ \ \ x\in   \R^n,\ t>0,
\end{eqnarray}
where ${\mathbb S}^{n-1}$ denotes the standard unit sphere in ${\mathbb R^n}$, $\omega_{n-1}$ is the surface area of $\SS^{n-1}$ and $\text{d}\omega$ is the induced Lebesgue measure on the unit sphere ${\mathbb S}^{n-1}.$ The following brief summary gives an overview of the progress
so far in this direction of research.

\smallskip
(i)\, In \cite[Theorem 2]{St2}, Stein showed that for $n\geq 2$
\begin{eqnarray}\label{e1.4}
\|	{\frak M}^{\alpha}  f \|_{L^p({\mathbb R^n})} \leq C\|f \|_{L^p({\mathbb R^n})}
\end{eqnarray}
provided that
$
\alpha>1-n+n/p$  when $1<p\leq 2;
$
or
$
\alpha>(2-n)/p$  when $2\leq p\leq \infty.
$
The above maximal theorem tells us  that    when  $\alpha=0$ and $n\geq 3$,  the    operator ${\frak  M} $ is bounded on $L^p({\mathbb R^n})$ for $p>n/(n-1)$.
This range of $p$ is sharp. As is pointed out in \cite{St2, St1}, no such result can hold for $p\leq  n/(n-1)$ if $n\geq 2$.

\smallskip

(ii)\, In \cite{Bo},  Bourgain   proved that  when  $\alpha=0$ and $n=2,$ the operator    ${\frak M} $ is bounded on $L^p{(\mathbb R^2)} $ for $p>2$.    Bourgain's theorem indicates that there exists $\varepsilon(p)>0$ such that
\begin{eqnarray}\label{e1.5}
\|	{\frak M}^{\alpha}  f \|_{L^p({\mathbb R^2})} \leq C\|f \|_{L^p({\mathbb R^2})}, \ \ \  \Re \alpha > -\varepsilon(p), \ \ \  2< p< \infty.
\end{eqnarray}
This result cannot hold even for $\alpha=0$ when $p=2$, see \cite{St2,St1}.
Subsequently Mockenhaupt, Seeger and Sogge \cite{MSS} gave an alternative proof of Bourgain's result by using local smoothing estimates for the wave operator in $2+1$ dimensions.

\smallskip

(iii)\, Using  the Bourgain-Demeter decoupling theorem \cite{BD}, Miao, Yang and Zheng \cite{MiYaZh} extended certain range of $\alpha$ in Stein's result \cite[Theorem 2]{St2} by showing that for $  n\geq 2$ and $p>2$,  \eqref{e1.4} holds
whenever
$
\Re\alpha >\max\left\{{ (1-n)/4} +{(3-n)/(2p)}, \,  {(1-n)/p} \right\}.
$

\smallskip

(iv)\, Recently,   Liu,  Song,   and the second and  fourth named authors  \cite{LiShSoYa} obtained the necessary condition on $(\alpha,p)$ for the   operator $	{\frak M}^\alpha$ to be bounded on $L^p(\R^n)$ by showing that for $n\geq 2$ and $p>2$ \eqref{e1.4}
holds only if
$
\Re\alpha \geq   \max \{1/p-(n-1)/2,\ -(n-1)/p \}.
$
In the two dimensional case   $n=2$, they showed  that  \eqref{e1.4} holds    whenever   $\Re\alpha>\max\{1/p-1/2,\ -1/p\}$  by applying    the work  of Guth-Wang-Zhang \cite{GWZ} on  local smoothing estimates for the wave operator  in $2+1$ dimensions.

Note that in \cite{NRS},   Nowak, Roncal and Szarek  \cite{NRS} found 
 sharp conditions 
for the    operator ${\frak M}^\alpha$ on radial functions to be bounded on 
 $L^p_{\rm rad}({\mathbb R}^n)$. They
  proved that for $n\geq 2 $ and $ \alpha> (1-n)/2$,    $	{\frak M}^\alpha$  is bounded on $L^p_{\rm rad}({\mathbb R}^n)$ if and only if  
$\alpha> 1-n + n/p$ if $1<p\leq 2$; $\alpha> 1/p + (1-n)/2$ if $2\leq p \leq 2n/(n-1)$; and $\alpha\geq (1-n)/p$ if $p> 2n/(n-1)$.   

\smallskip

In the last decades   the spherical maximal operators
have attracted a lot of attention and have been studied extensively by many authors.
The maximal theorem for spherical operator has some applications; for example, Stein \cite{St2} used it to derive a Fatou's theorem for wave equation.
The spherical operator can be extended to a more general hypersurface in   ${\mathbb R^n}$, see \cite{Bo, G, Io, IS,  MSS, So2,So1,   SS1, SoSt, St1,   SWa} for the references therein.

\smallskip

\subsection{Main results}

The purpose of this article is to investigate
the spherical maximal operators of (complex) order $\alpha$ on the $n$-dimensional  hyperbolic space $\H^n$, which was  introduced and studied by El Kohen (\cite{Ko}).  To do this,  we denote the Minkowski metric on ${\mathbb R^{n+1}}$   by
$$
[z,w]=z_0w_0 -z_1w_1-\cdots -z_nw_n,\ \ \  \  z =(z_0,\dots, z_n), w =(w_0,\dots,w_n)\in {\mathbb R^{n+1}}.
$$
The  hyperbolic space $\H^n$ is given by
\begin{eqnarray*}
\H^n=\{z=(z_0,z')\in {\mathbb R^{n+1}}: [z]:=[z, z]=1, \  z_0>0\},
\end{eqnarray*}
and we equip this space  with the  Riemannian metric induced by the above Minkowski metric  $[\cdot,\cdot]$.  This Riemannian metric induces in turn a measure, by which we denote $\d z$. Define the  spherical maximal operators by
\begin{eqnarray}\label{e1.7}
{\mathfrak m}^\alpha(f)(z)  =\sup_{t>0}|M^\alpha_t(f)(z)|,
\end{eqnarray}
where
\begin{eqnarray}\label{e1.8}
M^\alpha_t(f)(z) =2\e^t\left({\e^t-1\over \sinh t}\right)^{n-2}{1\over(\e^t-1)^{2\alpha+n-2}} {1\over \Gamma(\alpha)}\int_{B_t(z)}[\e^tz-w]^{\alpha-1}f(w)\,\d w,
\end{eqnarray}
$B_t(z)$ stands for the geodesic ball in $\H^n$ centered at $z\in\H^n$ with radius $t$ and $f$ is in the Schwartz space on $\H^n$.  These operators $\fm^\alpha$ and $M^\alpha_t$ are initially defined for $\Re\alpha >0,$   but the definition can be extended to all complex $\alpha$ with   $\Re\alpha> (1-n)/2$ by analytic continuation, see Lemma~\ref{lem:2.2} below. For $\alpha=0,$ one recovers the spherical maximal operator
\begin{eqnarray}\label{e1.9}
{\mathfrak m} (f)(z)
&=& \sup_{t>0} |f\ast \d\sigma_t(z)|,
\end{eqnarray}
where $\d\sigma_t$ is the normalized spherical measure defined on \cite[p. 277]{Io}.

A natural question is for which $\alpha$ the spherical maximal opeators ${\mathfrak m}^\alpha$  are bounded on $L^p$. In \cite[Theorem 3]{Ko}, El Kohen showed that for $n\geq 2$,
\begin{eqnarray}\label{e1.10}
\|{\mathfrak m}^\alpha f\|_{L^p(\H^n)}\leq C\| f\|_{L^p(\H^n)}
\end{eqnarray}
holds under the following circumstances:
\begin{eqnarray}\label{e1.11}
{\rm Re}\, \alpha>1-n+ {n\over p} \ \ \ \     {\rm when}\     1<p\leq 2;
\end{eqnarray}
or
\begin{eqnarray}\label{e1.12}
\ \ {\rm Re}\, \alpha>{2-n\over p}   \ \ \ \   \ \ \ \  {\rm when}\   2< p\leq \infty.
\end{eqnarray}
It was not until 2000 that Ionescu \cite{Io} showed that ${\mathfrak m}^0 $ is bounded on $L^p(\H^2)$ for $2<p\leq \infty$ by making use of the local smoothing estimates for Fourier integral operators (\cite{So2}).
The above two admissible relations for $\alpha$ and $p$ are summarized when $n\geq 3$ in Figure \ref{fig:1}
below, where the relations \eqref{e1.11} and \eqref{e1.12} correspond to the dashed segments $AB$ and $OB$, respectively.

\begin{figure}[H]
\begin{tikzpicture}[x=1.00mm, y=1.00mm, inner xsep=0pt, inner ysep=0pt, outer xsep=0pt, outer ysep=0pt]
\path[line width=0mm] (32.13,17.11) rectangle +(70.36,83.05);
\definecolor{L}{rgb}{0,0,0}
\path[line width=0.15mm, draw=L] (49.95,49.95) -- (100.05,49.95);
\definecolor{F}{rgb}{0,0,0}
\path[line width=0.15mm, draw=L, fill=F] (100.05,49.95) -- (98.65,50.30) -- (100.05,49.95) -- (98.65,49.60) -- (100.05,49.95) -- cycle;
\path[line width=0.15mm, draw=L] (50.00,20.00) -- (50.00,94.97);
\path[line width=0.15mm, draw=L, fill=F] (50.00,94.97) -- (49.65,93.57) -- (50.00,94.97) -- (50.35,93.57) -- (50.00,94.97) -- cycle;
\draw(100.49,45.06) node[anchor=base east]{\fontsize{11.38}{13.66}\selectfont $1\over p$};
\draw(46.38,95.03) node[anchor=base west]{\fontsize{11.38}{13.66}\selectfont $\alpha$};
\path[line width=0.15mm, draw=L] (49.93,49.92) circle (0.73mm);
\path[line width=0.15mm, draw=L] (70.00,30.00) circle (0.68mm);
\path[line width=0.15mm, draw=L] (89.96,90.01) circle (0.70mm);
\draw(48.88,88.60) node[anchor=base east]{\fontsize{11.38}{13.66}\selectfont 1};
\path[line width=0.15mm, draw=L, dash pattern=on 2.00mm off 1.00mm] (49.94,90.00) -- (90.00,90.00);
\draw(89.99,46.09) node[anchor=base]{\fontsize{11.38}{13.66}\selectfont 1};
\path[line width=0.15mm, draw=L, dash pattern=on 2.00mm off 1.00mm] (90.00,90.01) -- (90.00,49.95);
\path[line width=0.60mm, draw=L, dash pattern=on 2.00mm off 1.00mm] (89.98,90.07) -- (70.02,29.96);
\path[line width=0.60mm, draw=L, dash pattern=on 2.00mm off 1.00mm] (70.01,30.05) -- (49.99,49.99);
\path[line width=0.15mm, draw=L, dash pattern=on 2.00mm off 1.00mm] (70.01,30.01) -- (70.01,49.89);
\draw(70.00,52.55) node[anchor=base]{\fontsize{11.38}{13.66}\selectfont $1\over 2$};
\path[line width=0.15mm, draw=L, dash pattern=on 2.00mm off 1.00mm] (70.06,30.01) -- (50.01,30.01);
\draw(43.18,29.11) node[anchor=base west]{\fontsize{11.38}{13.66}\selectfont $2-n\over 2$};
\path[line width=0.15mm, draw=L] (59.98,36.34) circle (0.73mm);
\path[line width=0.15mm, draw=L, dash pattern=on 2.00mm off 1.00mm] (59.99,36.20) -- (60.01,49.84);
\draw(59.31,52.55) node[anchor=base]{\fontsize{11.38}{13.66}\selectfont $1\over p_n$};
\path[line width=0.15mm, draw=L, dash pattern=on 2.00mm off 1.00mm] (59.98,36.34) -- (49.96,36.35);
\draw(34.13,35.28) node[anchor=base west]{\fontsize{11.38}{13.66}\selectfont ${2-n\over p_n}-{1\over p_n^2}$};
\path[line width=0.60mm, draw=L, dash pattern=on 0.60mm off 0.50mm] (49.91,50.02) -- (60.06,36.33);
\path[line width=0.60mm, draw=L, dash pattern=on 0.60mm off 0.50mm] (59.99,36.30) -- (70.02,30.01);
\path[line width=0.15mm, draw=L, dash pattern=on 2.00mm off 1.00mm] (63.81,23.71) -- (50.01,23.66);
\path[line width=0.15mm, draw=L, fill=F] (63.72,23.75) circle (0.73mm);
\draw(36.30,22.83) node[anchor=base west]{\fontsize{11.38}{13.66}\selectfont $-{(n-1)^2\over 2n}$};
\path[line width=0.60mm, draw=L] (49.96,49.93) -- (63.73,23.74);
\path[line width=0.60mm, draw=L] (63.72,23.75) -- (70.01,30.02);
\draw(49.01,50.65) node[anchor=base east]{\fontsize{11.38}{13.66}\selectfont $O$};
\draw(64.59,52.55) node[anchor=base]{\fontsize{11.38}{13.66}\selectfont $n-1\over 2n$};
\path[line width=0.15mm, draw=L, dash pattern=on 2.00mm off 1.00mm] (63.73,49.94) -- (63.73,23.78);
\draw(91.38,90.56) node[anchor=base west]{\fontsize{11.38}{13.66}\selectfont $A$};
\draw(71.03,27.13) node[anchor=base west]{\fontsize{11.38}{13.66}\selectfont $B$};
\draw(61.24,31.53) node[anchor=base]{\fontsize{11.38}{13.66}\selectfont $C$};
\draw(64.48,19.98) node[anchor=base west]{\fontsize{11.38}{13.66}\selectfont $D$};
\end{tikzpicture}
\caption{The $(1/p,\alpha)$-plot for $n\geq 3$. El Kohen's result \cite[Theorem 3]{Ko} indicates that $\fm^\alpha$ is bounded on $L^p(\H^n)$ if $(1/p,\alpha)$ is strictly above the dashed folded  segments $OBA$. We extend the range of $(1/p,\alpha)$ in El Kohen's result for $2<p<\infty$ to the range strictly above the dotted folded segments $OCB$. In addition, we showed that $\fm^\alpha$ is bounded on $L^p(\H^n)$ only if $(1/p,\alpha)$ is on or above the solid folded segments $ODB$ for $2<p<\infty$, or strictly above the dashed segment $BA$ for $1<p\leq 2$.}
\label{fig:1}
\end{figure}
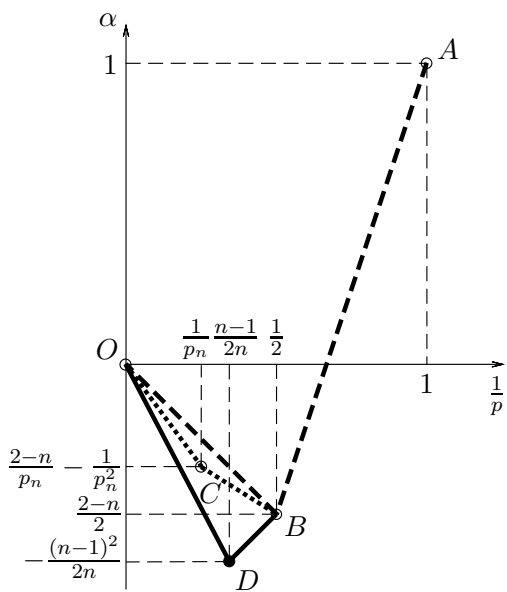

The present article  can be viewed as a continuation of the above body of work in   \cite{Io, Ko}.
Firstly,  we give a necessary condition on $(\alpha,p)$ for $\fm^\alpha$ to be bounded on $L^p(\H^n)$
for $\alpha\in\R$.

\begin{thm}\label{thm:1.1}
Let $\alpha\in\R$ and $n\geq 2$. Suppose  $\fm^\alpha$ is bounded on $L^p(\H^n)$ for $1<p<\infty$. Then we must have:
\begin{eqnarray}\label{e1.13}
\alpha>1-n+ {n\over p} \ \ \ \     {\rm when}\     1<p\leq 2;
\end{eqnarray}
or
\begin{eqnarray}\label{e1.14}
\alpha\geq  \max\left\{{1\over p}-{n-1\over 2},\ -{n-1\over p}\right\}\ \ \ \     {\rm when}\     2<p\leq \infty.
\end{eqnarray}
\end{thm}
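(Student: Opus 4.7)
The plan is to establish each of the three threshold conditions by constructing explicit test functions and deriving the required bound via direct comparison of $L^p$ norms. A common ingredient is the expansion $[e^t z - w] = e^{2t} + 1 - 2 e^t \cosh d(z, w) \approx 2 e^t \sinh t \cdot (t - d(z, w))$ for $d(z, w)$ near $t$, valid by the hyperbolic cosine rule. Combined with the prefactor in \eqref{e1.8}, the exponential-in-$t$ factors cancel when we integrate $[e^t z - w]^{\alpha - 1}$ against a narrow shell in $B_t(z)$.

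For \eqref{e1.13} with $1 < p \leq 2$, take $f_\delta = \indicator_{B_\delta(z_0)}$ for $z_0 \in \H^n$ fixed and $\delta \to 0^+$. For $z$ at distance $T = 1$ from $z_0$ and $t = T + \delta$, the set $\{w \in B_\delta(z_0) : t - d(z, w) \in (\eta, \eta + d\eta)\}$ has measure comparable to $\delta^{n-1} d\eta$ for $\eta \in (0, 2\delta)$; integrating gives $M^\alpha_t f_\delta(z) \gtrsim \delta^{n - 1 + \alpha}$ on a set of unit volume. Since $\|f_\delta\|_p \sim \delta^{n/p}$, the comparison forces $\alpha \geq 1 - n + n/p$. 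Strict inequality follows via a logarithmically refined test function $f(z) = d(z, z_0)^{-n/p} (\log(1/d(z, z_0)))^{-\beta} \indicator_{B_{1/2}(z_0)}(z)$, combined with a dyadic decomposition argument showing that $\fm^\alpha f \notin L^p$ at $\alpha = 1 - n + n/p$ for appropriate $\beta$.

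For the two bounds in \eqref{e1.14} with $2 < p$, I use two different test functions. For $\alpha \geq -(n-1)/p$: take the thin shell $f = \indicator_{A_{1, \delta}}$ with $A_{1, \delta} = \{w : 1 - \delta < d(z_0, w) < 1 + \delta\}$, so $\|f\|_p^p \sim \delta$. For $z \in B_\delta(z_0)$, the triangle inequality shows $\partial B_1(z)$ lies in $A_{1, \delta}$ up to lower-order corrections, and the same cancellation yields $M^\alpha_1 f(z) \gtrsim \delta^\alpha$, hence $\|\fm^\alpha f\|_p^p \gtrsim \delta^{\alpha p + n}$ and the comparison forces $\alpha \geq (1-n)/p$. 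For $\alpha \geq 1/p - (n-1)/2$: use a Knapp-type cap $T \subset A_{1, \delta}$ of angular width $\sqrt{\delta}$ about a fixed tangent direction from $z_0$, so $\|f\|_p^p \sim \delta^{(n+1)/2}$. For $z$ along the opposite geodesic at distance $1$ from $z_0$ and $t = 2$, the hyperbolic cosine rule expands $t - d(z, w) \approx K \theta'^2 - \rho$ for $w \in T$ (angular shift $\theta'$, radial shift $\rho$) with $K > 0$; integrating $(K\theta'^2 - \rho)^{\alpha - 1}$ against $\theta'^{n-2} d\theta' d\rho$ yields $M^\alpha_t f(z) \gtrsim \delta^{\alpha + (n-1)/2}$. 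A perturbation analysis (rotation and radial adjustment) shows this bound holds on a shadow tube of $z$'s of measure $\sim \delta^{(n-1)/2}$, yielding $\|\fm^\alpha f\|_p^p \gtrsim \delta^{(\alpha + (n-1)/2) p + (n-1)/2}$ and thus $\alpha \geq 1/p - (n-1)/2$ upon comparison.

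The main technical obstacle is the Knapp-type calculation in the third case, where carefully tracking the intersection of the sphere $\partial B_2(z)$ with the cap $T$ (using the hyperbolic law of cosines in polar coordinates) is essential --- both for the lower bound on $M^\alpha_t f(z)$ and for computing the measure of the shadow tube. A secondary subtlety is upgrading the inequality in \eqref{e1.13} from $\geq$ to $>$ at the critical exponent, requiring the logarithmic refinement above.
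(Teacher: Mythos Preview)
Your overall strategy---three test functions for the three thresholds---matches the paper's, but there is a genuine gap in how you handle $\alpha\leq 0$. The integral in \eqref{e1.8} defines $M^\alpha_t$ only for $\Re\alpha>0$; for smaller $\alpha$ (precisely the range of interest when $p>2$) the operator is obtained by analytic continuation, and the pointwise formula $M^\alpha_t f(z)=C_\alpha\int_{B_t(z)}[\e^t z-w]^{\alpha-1}f(w)\,\d w$ is valid only when $\supp f$ stays away from the sphere $\partial B_t(z)$ where the kernel is singular. In your shell example ($f=\indicator_{A_{1,\delta}}$, $t=1$) and your Knapp cap ($T\subset A_{1,\delta}$, $t=2$), the support of $f$ straddles $\partial B_t(z)$, so your lower bounds are not justified: the implicit integral $\int_0^{c\delta}\eta^{\alpha-1}\,\d\eta$ diverges for $\alpha\leq 0$, and analytic continuation does not preserve inequalities. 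The paper fixes this by pushing each test function strictly inside the relevant ball: the shell sits at radius $1-2\varepsilon$ rather than $1$ (Lemma~\ref{le4.3}), and the Knapp set is a box $R_j$ near the origin with $t=\rho(z)$ and a small constant $c_2$ chosen so that $\supp g_j\subset\operatorname{int}B_{\rho(z)}(z)$ for all $z$ in the tube (Lemma~\ref{le4.2}). Once the support is bounded away from $\partial B_t(z)$, the kernel is smooth there and the integral formula holds for every real $\alpha$.

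Two smaller points. First, your ``logarithmic refinement'' for the strict inequality in \eqref{e1.13} is only asserted; the paper instead builds the logarithm into a single test function $f_\delta(w)=\rho(w)^{1-n-\alpha}/(-\log\rho(w))$ supported in a cone, and shows directly that $\|\fm^\alpha f_\delta\|_p\gtrsim\log\log(1/\delta)$ while $\|f_\delta\|_p$ stays bounded at the critical exponent. Second, your Knapp geometry (cap at distance $1$, observation point $z$ on the opposite side, $t=2$) is more delicate than the paper's (box near the origin, $z$ at distance $\sim 1$ in an explicit tube, $t=\rho(z)$); in particular your ``perturbation analysis'' producing a shadow tube of measure $\delta^{(n-1)/2}$ is not carried out, whereas the paper's choice makes both the tube of admissible $z$'s and the containment $\supp g_j\subset B_t(z)$ transparent.
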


Secondly, we extend the range of $\alpha$ in \eqref{e1.12} for $2<p< \infty$  to the following result.

\begin{thm}\label{thm:1.2}
Suppose $2< p< \infty$ and $n\geq 2$. Then $\fm^\alpha$ is bounded on $L^p(\H^n)$ provided that
\begin{eqnarray*}
\Re\alpha>\max\left\{{2-n\over p}-{1\over p p_n}, \ {2-n\over p} - {p-2\over p p_n(p_n-2) }\right\},
\end{eqnarray*}
where
\begin{eqnarray}\label{e1.15}
p_n=\begin{cases}
\displaystyle{2(n+1)\over n-1}&\mbox{for }n\geq 3,\\
4&\mbox{for }n=2.
\end{cases}
\end{eqnarray}
\end{thm}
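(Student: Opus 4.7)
The plan is to deduce Theorem~\ref{thm:1.2} by Stein's analytic interpolation in the parameter $\alpha$, applied to three endpoint estimates whose interpolation lines meet at the exponent $p_n$. Indeed, the first expression $(2-n)/p-1/(p p_n)$ inside the maximum is the line joining $(1/p,\alpha)=(0,0)$ to $(1/p_n,(2-n)/p_n-1/p_n^2)$, while the second expression $(2-n)/p-(p-2)/[p p_n(p_n-2)]$ is the line joining $(1/2,(2-n)/2)$ to the same point. It therefore suffices to prove three endpoint estimates: an $L^2$ bound for $\Re\alpha>(2-n)/2$, a trivial $L^\infty$ bound for $\Re\alpha>0$, and the crucial $L^{p_n}$ bound for $\Re\alpha>(2-n)/p_n-1/p_n^2$. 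Complex interpolation of the analytic family $\alpha\mapsto\fm^\alpha$ then delivers Theorem~\ref{thm:1.2}.

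For the $L^2$ endpoint one linearizes the supremum in $t$, passes to the Helgason--Fourier side on $\H^n$, and applies Plancherel together with the known asymptotic behavior of the spherical function in the spectral variable; the prefactor $1/\Gamma(\alpha)$ cancels the pole at the critical line $\Re\alpha=(2-n)/2$, yielding uniform $L^2$ bounds strictly above it. The $L^\infty$ endpoint is immediate: for $\Re\alpha>0$ the kernel $[\e^t z-w]^{\alpha-1}$ is absolutely integrable over $B_t(z)$ with total mass comparable to the normalization constant, so $M_t^\alpha$ is dominated by an averaging operator.

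The crux of the argument is the $L^{p_n}$ endpoint. The strategy is to split $\fm^\alpha$ into a local part $\sup_{0<t\leq 1}|M_t^\alpha|$ and a global part $\sup_{t\geq 1}|M_t^\alpha|$. On the local scale the hyperbolic spherical means agree with their Euclidean counterparts up to a smooth perturbation of the defining amplitude, so the sharp Euclidean local smoothing machinery of Mockenhaupt--Seeger--Sogge \cite{MSS}, together with the Bourgain--Demeter decoupling used by Miao--Yang--Zheng \cite{MiYaZh} and the refinements in \cite{GWZ,LiShSoYa}, can be transplanted to deliver $\Re\alpha>(2-n)/p_n-1/p_n^2$. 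The value $p_n=2(n+1)/(n-1)$ is precisely the Stein--Tomas exponent at which the sharpest fixed-time estimate for the wave propagator, combined with a Sobolev embedding in $t$ to remove the supremum, produces the gain of $1/p_n^2$ derivatives. For the global part one exploits the strong dispersion of the wave equation on $\H^n$: a Littlewood--Paley decomposition in the spectral variable of the hyperbolic Laplacian expresses $M_t^\alpha$ as a sum of Fourier integral operators associated with the hyperbolic wave propagator, to which fixed-time $L^{p_n}$ bounds can be applied piece by piece, and the exponential volume growth of $\H^n$ provides ample summability.

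The hardest step will be the $L^{p_n}$ bound on the local part, specifically the uniform transfer of sharp Euclidean local smoothing at the Stein--Tomas exponent to the frequency-localized short-time hyperbolic spherical means; once this input is in hand, the Helgason--Fourier $L^2$ estimate, the trivial $L^\infty$ bound, the dispersive treatment of the global part, and the final analytic interpolation should all proceed by essentially standard machinery.
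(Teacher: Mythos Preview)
Your overall architecture---prove the $L^{p_n}$ endpoint $\Re\alpha>(2-n)/p_n-1/p_n^2$ and then interpolate analytically with Kohen's $L^2$ and $L^\infty$ endpoints---is correct and is exactly what the paper does. However, you have mislocated the main difficulty, and your sketch of the global part contains a genuine gap.

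The binding constraint $\Re\alpha>(2-n)/p_n-1/p_n^2$ comes from the \emph{global} piece $\sup_{t\gg 1}|M_t^\alpha f|$, not the local one. The local maximal function $\sup_{0<t\leq 10}|M_t^\alpha f|$ is bounded on $L^{p_n}$ under the strictly weaker condition $\Re\alpha>(1-n)/p_n$ (Proposition~\ref{prop:5.7}), via a fairly direct reduction to Euclidean variable-coefficient local smoothing. Since $(1-n)/p_n<(2-n)/p_n-1/p_n^2$, the local part is not where the threshold is set.

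For the global part your proposal---fixed-time $L^{p_n}$ bounds plus ``exponential volume growth provides ample summability''---does not work as stated. Exponential volume growth is the obstacle, not the friend: once you localize to balls of bounded radius (which you must, to invoke any Euclidean-type FIO estimate), the number of interacting balls at distance $\approx T$ is $\sim \e^{(n-1)T}$, and H\"older's inequality then produces an unfavorable factor $\e^{(n-1)(1-1/p_n)T}$ (see the proof of Lemma~\ref{lem:5.6}). The kernel decay $\e^{-(n-1)T/2}$ alone cannot beat this for $p_n>2$. Moreover, fixed-time FIO bounds lose an additional $1/p_n$ of a derivative compared with local smoothing; with only fixed-time input you recover at best Kohen's threshold $(2-n)/p_n$, not the improved $(2-n)/p_n-1/p_n^2$. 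The paper's argument (following Ionescu~\cite{Io}) instead passes to Iwasawa coordinates, in which spheres of radius $\approx T$ become graphs with rotational and cinematic curvature bounded below \emph{uniformly in $T$}, and applies the variable-coefficient local smoothing of Beltran--Hickman--Sogge~\cite{BeHiSo} and Gao--Liu--Miao--Xi~\cite{GaLiMiXi} at the exponent $p_n$. The resulting estimate on each window $[T,T+1]$ is then interpolated against frequency-localized $L^2$ and $L^\infty$ bounds (Lemmas~\ref{lem:5.4}--\ref{lem:5.6}) and summed in the dyadic frequency and in $T$; this balancing is precisely where the extra $1/p_n^2$ emerges and where the competition between kernel decay and volume growth is resolved.
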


Theorems  ~\ref{thm:1.1}  and  ~\ref{thm:1.2} can be illustrated using Figure \ref{fig:1} above. The relations \eqref{e1.13} and \eqref{e1.14} in Theorem \ref{thm:1.1} correspond to the dashed segment $AB$ and the solid folded segments $ODB$, respectively. Theorem \ref{thm:1.2} indicates that compared to \eqref{e1.12}, there is an $1/(pp_n)$-downwards extension of the range of $\Re\alpha$ so that \eqref{e1.10} is valid for $p\geq p_n$, and there is a $(p-2)/[pp_n(p_n-2)]$-improvement for $2<p<p_n$. This is indicated in Figure \ref{fig:1} above by the dotted folded segments $OCB$.

The paper is organized as follows. In Section \ref{sec:2} we give some preliminary  facts about the space $\H^n$ as well as its Fourier theory. In Section \ref{sec:4} we show Theorem \ref{thm:1.1} by constructing three examples. In  Section \ref{sec:5} we  prove Theorem \ref{thm:1.2} by applying an approach of Ionescu \cite{Io}, whose core is to adopt the Iwasawa coordinates to use the work of Beltran-Hickman-Sogge \cite{BeHiSo} and Gao-Liu-Miao-Xi \cite{GaLiMiXi} on local smoothing estimates for Fourier integral operators on ${\mathbb R^n}$.

\medskip


\section{Fourier analysis on the hyperbolic space}\label{sec:2}
\setcounter{equation}{0}

In  this section  we recall some basic facts about the hyperbolic space $\H^n$ as well as its Fourier theory (see for examples, \cite{He, IoSt, Pe}).
The Minkowski metric on ${\mathbb R^{n+1}}$ is given by
$$
[z,w]=z_0w_0 -z_1w_1-\cdots -z_nw_n,\ \ \   \ \  z=(z_0,\dots, z_n), \ w= (w_0,\dots,w_n)\in {\mathbb R^{n+1}}.
$$
We define $\H^n$ as the hyperboloid
\begin{eqnarray*}
\H^n=\{z=(z_0,z')\in {\mathbb R^{n+1}}: [z, z]=1, \  z_0>0\}
\end{eqnarray*}
and equip this space with the Riemannian metric induced by the Minkowski metric $[\cdot,\cdot]$. This Riemannian metric induces in turn a measure, by which we denote $\d z$. We will further distinguish the point ${\bf 0}=(1, 0, \dots, 0)$.

The Laplace-Beltrami operator on $	\H^n$ is denoted by $\Delta_{	\H^n}$. It has spectrum $(-\infty, -(n-1)^2/4]$. For $\omega\in {\mathbb S}^{n-1}$, let
$$
b(\omega)=(1, \omega)\in {\mathbb R}^{n+1}.
$$
The analog of plane waves is provided by
$$
h_{\lambda, \omega}(z)=[z, b(\omega)]^{i\lambda-(n-1)/2}, \ \ \ z\in 	\H^n
$$
(notice that $[z, b(\omega)]>0 $ for $z\in 	\H^n$). They satisfy
$$
\Delta_{	\H^n}h_{\lambda, \omega}(z)=-\left(\lambda^2 +{(n-1)^2\over 4}\right)\, h_{\lambda, \omega}(z).
$$

The Helgason Fourier transform on $\H^n$ is defined as
\begin{equation}\label{e2.111}
{\F}(f)(\lambda, \omega)=\int_{\H^n} f(z)\, h_{\lambda, \omega}(z)\,\d z, \ \ \ \  (\lambda, \omega) \in {\mathbb R}_+ \times {\mathbb S}^{n-1},
\end{equation}
while the inverse Fourier transform is then given by
\begin{equation}\label{e2.112}
f(z)= {1\over \omega_{n-1}} \int_0^{\infty}\int_{\mathbb S^{n-1}}  {\F}(f)(\lambda, \omega)\, {\overline{
h_{\lambda, \omega}(z)}}\, |\bc(\lambda)|^{-2}\,\d\lambda\d\omega,
\end{equation}
for the Harish-Chandra function
\begin{eqnarray}\label{e2.1}
\bc(\lambda)= {2^{n-2}\Gamma(n/2)\over \sqrt{\uppi}}{\Gamma(\i\lambda)\over\Gamma((n-1)/2+\i\lambda)}.
\end{eqnarray}
This time the following Plancherel's theorem holds: the Fourier transform is an isometry from $L^2(\H^n, \d z)$ to $L^2({\mathbb R_+}\times {\mathbb S^{n-1}},  |\bc(\lambda)|^{-2} \d\lambda {\d\omega \over \omega_{n-1}} )$.

We need the estimates for the Harish-Chandra function $\bc$, which will be frequently used throughout the paper.
For its proof, we refer the reader to \cite[Proposition A1(a)(b)]{Io}.

\begin{lem}\label{lem:2.0}
Suppose $\lambda\in \R$.
The Harish-Chandra function $\bc(\lambda)$ satisfies
$
|\bc(\lambda)|^{-2}=(\bc(\lambda)\,\bc(-\lambda))^{-1}.
$
The function $ \lambda^{-1}(\bc(-\lambda))^{-1}$ belongs to $C^\infty(\R)$ and
\begin{eqnarray*}
\left|{\d^k\over\d\lambda^k}(\lambda^{-1}(\bc(\lambda))^{-1})\right|\leq C_k\,(1+|\lambda|)^{(n-1)/2-1-k}.
\end{eqnarray*}
\end{lem}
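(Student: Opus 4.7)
My plan is to split the lemma into three pieces --- the algebraic identity, the smoothness at the origin, and the size/derivative estimate at infinity --- and treat each using standard properties of the Euler $\Gamma$-function.

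The identity $|\bc(\lambda)|^{-2}=(\bc(\lambda)\bc(-\lambda))^{-1}$ I would get from Schwarz reflection: for real $\lambda$ one has $\overline{\Gamma(i\lambda)}=\Gamma(-i\lambda)$ and $\overline{\Gamma((n-1)/2+i\lambda)}=\Gamma((n-1)/2-i\lambda)$, so the definition \eqref{e2.1} gives $\overline{\bc(\lambda)}=\bc(-\lambda)$. Hence $|\bc(\lambda)|^2=\bc(\lambda)\overline{\bc(\lambda)}=\bc(\lambda)\bc(-\lambda)$ and the identity follows.

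For smoothness of $\lambda^{-1}(\bc(-\lambda))^{-1}$ on $\R$, I would write
\[
(\bc(-\lambda))^{-1}=\frac{\sqrt{\uppi}}{2^{n-2}\Gamma(n/2)}\cdot\frac{\Gamma((n-1)/2-\i\lambda)}{\Gamma(-\i\lambda)}.
\]
The denominator $1/\Gamma$ is entire with $1/\Gamma(z)=z+\gamma z^{2}+\cdots$, so $1/\Gamma(-\i\lambda)=-\i\lambda\cdot H(\lambda)$ with $H$ entire and $H(0)=1$; the numerator $\Gamma((n-1)/2-\i\lambda)$ is holomorphic on a neighborhood of $\R$ since $(n-1)/2\geq 1/2$ when $n\geq 2$, so it has no real poles. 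Dividing by $\lambda$ cancels the $-\i\lambda$ factor, leaving a real-analytic function of $\lambda\in\R$, which is in particular $C^{\infty}(\R)$.

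For the derivative estimate, my main tool would be Stirling's asymptotic expansion
\[
\log\Gamma(z)=(z-\tfrac12)\log z-z+\tfrac12\log(2\uppi)+\sum_{j=1}^{N}\frac{B_{2j}}{2j(2j-1)z^{2j-1}}+R_{N}(z),
\]
valid uniformly on sectors $|\arg z|\leq\uppi-\delta$ with $|\partial_{z}^{k}R_{N}(z)|\leq C_{k,N}|z|^{-2N-1-k}$ (the last bound coming from a Cauchy integral applied to the holomorphic remainder). Subtracting $\log\Gamma(\i\lambda)$ from $\log\Gamma((n-1)/2+\i\lambda)$ and exponentiating yields a differentiable asymptotic expansion
\[
\frac{\Gamma((n-1)/2+\i\lambda)}{\Gamma(\i\lambda)}=(\i\lambda)^{(n-1)/2}\Bigl(1+\sum_{j=1}^{N-1}\frac{a_{j}}{(\i\lambda)^{j}}+O(|\lambda|^{-N})\Bigr)\qquad\text{as }|\lambda|\to\infty,
\]
which can be differentiated term by term, each derivative in $\lambda$ contributing an extra factor of size $|\lambda|^{-1}$. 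Therefore $|\partial_{\lambda}^{k}(\bc(\lambda))^{-1}|\leq C_{k}(1+|\lambda|)^{(n-1)/2-k}$ for $|\lambda|\geq 1$, and pulling out the prefactor $\lambda^{-1}$ (via the Leibniz rule) shifts the exponent down by one, yielding the stated bound $(1+|\lambda|)^{(n-1)/2-1-k}$. Combined with the smoothness established above, which handles the region $|\lambda|\leq 1$, the estimate holds uniformly on $\R$.

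The main obstacle is the differentiability of the Stirling expansion: one needs not just the leading asymptotic of $\Gamma((n-1)/2+\i\lambda)/\Gamma(\i\lambda)$ but that each $\partial_{\lambda}^{k}$ produces exactly the decay obtained by formally differentiating the leading term. This is standard but a little delicate, and I would justify it by applying Cauchy's integral formula on a disk of radius $\sim|\lambda|$ inside the sector of validity of Stirling's formula, converting the size bound on $R_{N}$ into bounds on $\partial_{\lambda}^{k}R_{N}$.
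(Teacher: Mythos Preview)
Your argument is correct. The paper does not actually prove this lemma; it simply refers to \cite[Proposition~A1(a)(b)]{Io}. Your self-contained route---Schwarz reflection for the identity $\overline{\bc(\lambda)}=\bc(-\lambda)$, the entire function $1/\Gamma$ to remove the apparent singularity at $\lambda=0$, and Stirling's expansion (with Cauchy estimates on disks of radius comparable to $|\lambda|$ inside the sector to justify term-by-term differentiation of the remainder) for the symbol bounds at infinity---is the standard way such estimates are established, and it is essentially what underlies the cited reference as well as the paper's own one-line proof of the companion Lemma~\ref{lem:2.1}. So there is nothing substantive to compare: you have supplied the details the paper omits.
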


In analogy to $\bc$, we define another function
\begin{eqnarray}\label{e2.2}
\bc^\alpha(\lambda)={2^{n-2}\Gamma(n/2)\over \sqrt{\uppi}}{\Gamma(\i\lambda)\over\Gamma((n-1)/2+\alpha+\i\lambda)}, \ \ \  {\rm Re}\, \alpha>{(1-n)/2}.
\end{eqnarray} 
Specifically, when $\alpha=0$, $\bc^\alpha(\lambda) $ is exactly the Harish-Chandra function $\bc(\lambda)$ defined in \eqref{e2.1}. For the function $\bc^\alpha$ we have the following result.

\begin{lem}\label{lem:2.1}
Let  ${\rm Re}\, \alpha>{(1-n)/2}$ and $k\in{\mathbb N}$.  Then the  function $  \lambda\,\bc^\alpha(\lambda)$ belongs to $C^\infty(\R)$ and
\begin{eqnarray*}
\left|{\d^k\over\d\lambda^k}(\lambda \,\bc^\alpha(\lambda))\right|\leq C_{\alpha,k}\,(1+|\lambda|)^{1-(n-1)/2-\Re\alpha-k}.
\end{eqnarray*}

\end{lem}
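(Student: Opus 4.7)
The plan is to reduce the statement to an asymptotic analysis of a ratio of Gamma functions. First I would remove the $\lambda=0$ pole of $\Gamma(\i\lambda)$ via the functional equation $\Gamma(z+1)=z\,\Gamma(z)$ with $z=\i\lambda$, which gives $\lambda\,\Gamma(\i\lambda)=-\i\,\Gamma(1+\i\lambda)$ and hence
\[
\lambda\,\bc^\alpha(\lambda) \;=\; -\i\,\frac{2^{n-2}\Gamma(n/2)}{\sqrt{\uppi}}\cdot\frac{\Gamma(1+\i\lambda)}{\Gamma\bigl(\tfrac{n-1}{2}+\alpha+\i\lambda\bigr)}.
\]
The numerator $\Gamma(1+\i\lambda)$ has no poles for real $\lambda$, since the poles of $\Gamma$ lie on the non-positive real axis, and $\Gamma$ is nowhere zero on $\CC$, so the right-hand side extends holomorphically to an open strip about $\R$. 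In particular $\lambda\,\bc^\alpha(\lambda)\in C^\infty(\R)$, which settles the regularity claim.

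Set $G(\lambda):=\Gamma(1+\i\lambda)/\Gamma((n-1)/2+\alpha+\i\lambda)$. Stirling's asymptotic $|\Gamma(x+\i y)|=\sqrt{2\uppi}\,|y|^{x-1/2}\e^{-\uppi|y|/2}(1+O(|y|^{-1}))$, valid uniformly for $x$ in a compact set as $|y|\to\infty$, immediately yields the $k=0$ case $|G(\lambda)|\leq C_\alpha(1+|\lambda|)^{1-(n-1)/2-\Re\alpha}$.

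For $k\geq 1$ I would pass to logarithmic derivatives. Writing $\psi=\Gamma'/\Gamma$ for the digamma function, differentiating $j$ times gives
\[
(\log G)^{(j)}(\lambda)\;=\;\i^{\,j}\bigl[\psi^{(j-1)}(1+\i\lambda)-\psi^{(j-1)}\bigl(\tfrac{n-1}{2}+\alpha+\i\lambda\bigr)\bigr],\qquad j\geq 1.
\]
The standard asymptotics $\psi(z)=\log z+O(|z|^{-1})$ and $\psi^{(j-1)}(z)=(-1)^{j}(j-2)!\,z^{-(j-1)}+O(|z|^{-j})$ for $j\geq 2$, together with a first-order Taylor expansion in the bounded difference of the two arguments, show that $(\log G)^{(j)}(\lambda)=O(|\lambda|^{-j})$ for every $j\geq 1$ as $|\lambda|\to\infty$. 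Applying the Faà di Bruno formula to $G=\e^{\log G}$ then expresses $G^{(k)}/G$ as a finite sum of monomials $(\log G)^{(j_1)}\cdots(\log G)^{(j_m)}$ with $j_1+\cdots+j_m=k$ and each $j_i\geq 1$; each such monomial is $O(|\lambda|^{-k})$, so
\[
|G^{(k)}(\lambda)|\;=\;|G(\lambda)|\cdot O(|\lambda|^{-k})\;\leq\;C_\alpha(1+|\lambda|)^{1-(n-1)/2-\Re\alpha-k}
\]
for $|\lambda|\geq 1$; for $|\lambda|\leq 1$ the bound is trivial by smoothness on a compact interval.

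The only substantive point is the one-line asymptotic of the digamma/polygamma \emph{difference} that produces the extra factor $|\lambda|^{-1}$ per derivative; once this is recorded, the Faà di Bruno bookkeeping is automatic. An alternative that avoids digamma altogether is to apply Cauchy's integral formula to the holomorphic extension of $G$ on a disc of radius comparable to $|\lambda|$ centred at $\lambda$; a direct check with Stirling shows that this disc stays in the holomorphic region of $G$ for $|\lambda|$ large and that the $k=0$ bound holds uniformly on it, so Cauchy gives the same $|\lambda|^{-k}$-gain. Either route yields the stated estimate.
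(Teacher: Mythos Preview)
Your argument is correct and is precisely what the paper has in mind: the paper's entire proof is the single sentence ``This is a consequence of Stirling's formula,'' and your write-up is a careful unpacking of that sentence (removing the pole via $\Gamma(1+\i\lambda)=\i\lambda\,\Gamma(\i\lambda)$, then using the Stirling asymptotics for the ratio and controlling derivatives either via polygamma asymptotics and Fa\`a di Bruno or via Cauchy's estimate on a disc of radius $\sim|\lambda|$).
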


\begin{proof}
This is a consequence of  Stirling's formula (\cite[p. 151, Example. (ii)]{Ti}).
\end{proof}
\smallskip

In the space $\H^n$, we can adopt polar coordinates
\begin{eqnarray*}
z=(\cosh r,\omega\sinh r),  \ \ \  \ \ \   (r, \omega) \in {\mathbb R}_+ \times {\mathbb S}^{n-1},
\end{eqnarray*}
where $\SS^{n-1}\subset \R^n$ is the unit sphere on $\R^n$. The canonical Riemannian metric on $\H^n$ is then given by
\begin{eqnarray*}
\d r^2+\sinh^2 r\,\d s_{n-1}^2,
\end{eqnarray*}
where $r\geq 0$ is the geodesic distance to the orginal ${\bf 0}$ and $\d s^2_{n-1}$ is the canonical Riemannian metric on $\SS^{n-1}$. In these coordinates, the volume element becomes
$$\d z= (\sinh r)^{n-1}\, \d r \d\omega.
$$

The spherical function on $\H^n$ is defined by
\begin{eqnarray*}
\varphi_\lambda(z)={1\over \omega_{n-1}} \int_{{\mathbb S^{n-1}}} h_{\lambda, \omega}(z)\, \d\omega.
\end{eqnarray*}
It depends only on the distance $r$ of $z$ to the origin $\0$ and can be written as
\begin{eqnarray}\label{e2.4}
\varphi_\lambda(r)={\Gamma(n/2)\over \sqrt{\uppi}\,\Gamma((n-1)/2)\,}    \int_0^\uppi (\cosh r-\cos s\,\cosh r)^{-(n-1)/2+\i\lambda}(\sin s)^{n-2}\,\d s.
\end{eqnarray}
See \cite{Vi}.

Since radial   functions on $\H^n$ depend on $r$ only, the Helgason Fourier transform \eqref{e2.111} of such function $f$ can be written as
$$
{\F}(f)(\lambda)=\int_{\H^n}f(z)\,\varphi_\lambda(z)\,\d z=\omega_{n-1}\int_{\H^n} f(r)\, \varphi_\lambda(r) \,(\sinh r)^{n-1}\, \d r
$$
and the inverse Fourier transform \eqref{e2.112} is  given by
$$
f(r)=  \int_0^{\infty} {\F}(f)(\lambda )\,
\varphi_\lambda(r)\, |\bc(\lambda)|^{-2}\, \d\lambda.
$$
If $K$ is radial, then the convolution of $f$ and $K$ on $\H^n$ can be written as
$$
f\ast K(z)=\int_{\H^n} f(w) \,K(d(z, w)) \,\d w,
$$
where $d(z,w)= \arcosh ([z, w])$  denotes the geodesic distance between $z$ and $w$.

In the following, we denote  $D=\sqrt{-\Delta_{	\H^n} -(n-1)^2/4}$.
For any bounded Borel function $m:[0, \infty)\to {\mathbb C}$,  we define the
radial Fourier multipliers $m(D)$ by the formula
\begin{eqnarray}\label{e2.3}
m(D)(f)(z)= {\F}^{-1}[ m(\lambda)\, {\F}(f)(\lambda, \omega)](z).
\end{eqnarray}
See \cite[p. 10]{GeLe}. It follows from  \eqref{e2.3} that  the
radial Fourier multipliers $m(D)$  can be rewritten in the following:
\begin{eqnarray*}
m(D)(f)(z)=f*K(z)=\int_{\H^n}f(w)\,K(d(z,w))\,\d w,
\end{eqnarray*}
where
\begin{eqnarray*}
K(r)=\F^{-1}(m)(r)=\int_0^\infty m(\lambda)\,\varphi_\lambda(r)\,|\bc(\lambda)|^{-2}\,\d\lambda.
\end{eqnarray*}
\smallskip

As mentioned in Section \ref{sec:1}, the  operator $M^\alpha_t(f)$ in  \eqref{e1.8}  is  initially defined for $\Re \alpha >0,$   but the definition can be extended to all complex $\alpha$ with   $\Re \alpha> (1-n)/2$ by analytic continuation.
To explain this, we shall use the Legendre function. Recall that
in view of \cite[p. 155, (1)]{Er}, for $\Re(-\mu)\geq \Re\nu>-1$ and $\zeta\notin[-1,\infty)$   the Legendre function of order $\mu$ and degree $\nu$ has the following integral representation:
\begin{eqnarray*}
P^\mu_\nu(\zeta)={2^{-\nu}(\zeta^2-1)^{-\mu/2}\over \Gamma(-\mu-\nu)\,\Gamma(\nu+1)}\int_0^\infty (\zeta+\cosh s)^{\mu-\nu-1}(\sinh s)^{2\nu+1}\,\d s.
\end{eqnarray*}
Then we have the following result.

\begin{lem}\label{lem:2.2}
The operator   $M^\alpha_t(f)$   in  \eqref{e1.8} can be extended to
$\alpha\in\CC$ with $\Re\alpha>(1-n)/2$  via analytic continuation such that
\begin{eqnarray*}
M^\alpha_t(f)(z)=m^\alpha_t(D)(f)(z),
\end{eqnarray*}
where
\begin{eqnarray}\label{e2.6}
m^\alpha_t(\lambda)=2^{(n-2)/2+\alpha}  \Gamma\! \left({n\over 2}\right)
\e^{\alpha t}  (\e^t -1)^{-2\alpha} (\sinh t)^{\alpha-(n-2)/2}P^{-\alpha -(n-2)/2}_{-1/2+\i\lambda}(\cosh t).
\end{eqnarray}
\end{lem}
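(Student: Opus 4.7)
The plan is, for $\Re\alpha>0$, to recognize $M^\alpha_t$ as a radial convolution operator on $\H^n$, compute its spherical multiplier explicitly, identify the result with formula \eqref{e2.6} via an integral representation of the Legendre function, and then use \eqref{e2.6} itself to extend the definition to the strip $\Re\alpha>(1-n)/2$.

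First I would rewrite the Minkowski bracket in \eqref{e1.8} using $[z,w]=\cosh d(z,w)$: for $r=d(z,w)$,
\begin{eqnarray*}
[\e^t z-w]=\e^{2t}-2\e^t\cosh r+1=2\e^t(\cosh t-\cosh r).
\end{eqnarray*}
Thus the integrand in \eqref{e1.8} depends on $w$ only through $r$, so $M^\alpha_t(f)=f\ast K^\alpha_t$ with the radial kernel
\begin{eqnarray*}
K^\alpha_t(r)=\frac{2^\alpha\e^{\alpha t}}{\Gamma(\alpha)(\sinh t)^{n-2}(\e^t-1)^{2\alpha}}\,(\cosh t-\cosh r)^{\alpha-1}\,\indicator_{[0,t]}(r).
\end{eqnarray*}
The operator $M^\alpha_t$ is therefore the radial Fourier multiplier $m^\alpha_t(D)$ whose symbol is the spherical transform
\begin{eqnarray*}
m^\alpha_t(\lambda)=\omega_{n-1}\int_0^t K^\alpha_t(r)\,\varphi_\lambda(r)(\sinh r)^{n-1}\,\d r.
\end{eqnarray*}

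Matching this with \eqref{e2.6} reduces to the identity
\begin{eqnarray*}
\int_0^t(\cosh t-\cosh r)^{\alpha-1}\varphi_\lambda(r)(\sinh r)^{n-1}\,\d r=c_n\,\Gamma(\alpha)\,(\sinh t)^{\alpha+(n-2)/2}\,P^{-\alpha-(n-2)/2}_{-1/2+\i\lambda}(\cosh t)
\end{eqnarray*}
for an explicit $n$-dependent constant $c_n$. I would prove this by substituting the integral representation \eqref{e2.4} for $\varphi_\lambda$, swapping the order of integration, and then transforming the inner $r$-integral by an Euler substitution (for example $\cosh r=\cosh t-(\cosh t+\cos s\sinh t)u$) into the Erd\'elyi integral form for $P^\mu_\nu$ quoted just before the lemma, with $\mu=-\alpha-(n-2)/2$ and $\nu=-1/2+\i\lambda$; the constant $c_n$ is then pinned down by combining $\omega_{n-1}=2\uppi^{n/2}/\Gamma(n/2)$ with the gamma-factor normalization of $P^\mu_\nu$. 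This is the main technical obstacle: the Erd\'elyi representation requires $\zeta\notin[-1,\infty)$, whereas here $\zeta=\cosh t>1$, so the identification must be effected through analytic continuation in $\zeta$ (equivalently, through the Mehler-type representation valid on $(1,\infty)$, which is obtained from Erd\'elyi's by a standard Kummer transformation of the underlying hypergeometric function).

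Finally, the right-hand side of \eqref{e2.6} is holomorphic in $\alpha$ throughout $\Re\alpha>(1-n)/2$: the gamma factor in the Legendre normalization absorbs the pole of $1/\Gamma(\alpha)$ already present in \eqref{e1.8} near $\alpha=0$, and no new singularity is introduced on this strip. Moreover, for each fixed $t>0$ and $\alpha$ in this range, $m^\alpha_t(\lambda)$ is bounded in $\lambda\in\R$ (by Stirling's formula as in the proof of Lemma~\ref{lem:2.1}), so $m^\alpha_t(D)$ is bounded on $L^2(\H^n)$. We therefore \emph{define} $M^\alpha_t:=m^\alpha_t(D)$ for all $\Re\alpha>(1-n)/2$; it agrees with \eqref{e1.8} when $\Re\alpha>0$ by the identification above and provides the desired analytic continuation.
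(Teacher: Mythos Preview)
Your proposal is correct in outline but takes a different, more self-contained route than the paper. The paper's proof is very short: it simply cites Kohen \cite[Theorem~1]{Ko}, which already establishes the analytic continuation and gives the multiplier in the form
\[
m^\alpha_t(\lambda)=2^{{n-2\over 2}+\alpha}\Gamma(n/2)\,\e^{\alpha t}(\e^t-1)^{-2\alpha}(\sinh t)^{-(n-2)}\,I^{\alpha+(n-2)/2}P^0_{-1/2+\i\lambda}(\cosh t),
\]
and then applies a single classical identity for fractional integrals of Legendre functions, namely \cite[p.~159,~(5)]{Er}, to convert $I^{\alpha+(n-2)/2}P^0_{-1/2+\i\lambda}$ into $(\sinh t)^{\alpha+(n-2)/2}P^{-\alpha-(n-2)/2}_{-1/2+\i\lambda}$. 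No direct computation of the spherical transform is needed.

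By contrast, you propose to compute the spherical transform of the kernel from scratch via the integral formula \eqref{e2.4} for $\varphi_\lambda$ and an Euler substitution. This is workable and has the virtue of not relying on \cite{Ko}, but it is heavier, and the obstacle you flag (the Erd\'elyi integral for $P^\mu_\nu$ quoted above the lemma is stated for $\zeta\notin[-1,\infty)$, whereas $\cosh t>1$) is real and must be handled by the Mehler-type formula you mention. A cleaner variant of your approach would be to use from the outset the well-known identification $\varphi_\lambda(r)=c_n(\sinh r)^{-(n-2)/2}P^{-(n-2)/2}_{-1/2+\i\lambda}(\cosh r)$; then your key integral becomes exactly the fractional integral of a Legendre function on $(1,\infty)$ and the identity \cite[p.~159,~(5)]{Er} applies directly, bypassing both the double integral and the domain issue. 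In effect this reproduces Kohen's computation and then the paper's final step in one stroke.
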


\begin{proof}
In  \cite[Theorem 1]{Ko}, El Kohen showed that for $\alpha\in\CC$ with $\Re\alpha>(1-n)/2$ one can write
\begin{eqnarray*}
M^\alpha_t(f)(z)=m^\alpha_t(D)(f)(z)
\end{eqnarray*}
via analytic continuation, where
\begin{equation}\label{e2.5}
m^\alpha_t(\lambda)=2^{{n-2\over 2}+\alpha}  \Gamma\! \left({n\over 2}\right)
\e^{\alpha t}  (\e^t -1)^{-2\alpha} (\sinh t)^{-(n-2)}I^{\alpha +{n-2\over 2}}P^0_{-{1\over 2}+\i\lambda}(\cosh t)
\end{equation}
and $I^\gamma$ is the fractional integral operator given by
\begin{eqnarray*}
I^\gamma(f)(s)={1\over\Gamma(\gamma)}\int_1^sf(t)\,(s-t)^{\gamma-1}\,\d t.
\end{eqnarray*}
In view of \cite[(5) on p. 159]{Er}, we have
\begin{eqnarray*}
I^{\alpha+(n-2)/2}P^0_{-1/2+\i\lambda}(\cosh t)=(\sinh t)^{\alpha+(n-2)/2}P^{-\alpha-(n-2)/2}_{-1/2+\i\lambda}(\cosh t).
\end{eqnarray*}
Substituting this back into \eqref{e2.5},   we obtain \eqref{e2.6}.
\end{proof}

We have the following asymptotics of the multiplier function $m^\alpha_t(\lambda)$, which will be used in the proof of Theorem~\ref{thm:1.2}.

\begin{lem}\label{lem:2.3}
Suppose $\lambda\in\R$.
\begin{asparaenum}[(i)]
\item If $t>0$, then
\begin{eqnarray*}
|m^\alpha_t(\lambda)|\leq C_\alpha \,(1+t)\,\e^{-(n-1) t/2}.
\end{eqnarray*}

\item If $0<t\leq \uppi$ satisfying $t|\lambda|\geq 1$, then $m^\alpha_\lambda(t)$ can be written in the form
\begin{eqnarray}\label{emmm}
m^\alpha_t(\lambda)=\e^{\i\lambda t}a^{\alpha,N}_1(\lambda,t)+\e^{-\i\lambda t}a^{\alpha,N}_1(-\lambda,t)+E^{\alpha,N}(\lambda,t),
\end{eqnarray}
where
\begin{eqnarray}\label{e2.601}
\begin{cases}
\displaystyle|{\partial^k_\lambda}{\partial^l_t}a^{\alpha,N}_1(\lambda,t)|\leq C_N\,(|\lambda|t)^{-\Re \alpha-(n-1)/2}|\lambda|^{-k}t^{-l},\\[4pt]
|E^{\alpha,N}(\lambda,t)|\leq C_N\,(|\lambda|t)^{-\Re\alpha-(n-1)/2-N-1} 
\end{cases}
\end{eqnarray}
for all integers $N, k, l\geq 0$, and for $\lambda, t$ in the ranges stated above.

\item If $t>(\log 2)/2$, then $m^\alpha_t(\lambda)$ can be written in the form
\begin{eqnarray*}
m^\alpha_t(\lambda)=\e^{-(n-1) t/2}\left(\e^{\i\lambda t}\bc^\alpha(\lambda)\,a^\alpha_2(\lambda,t)+\e^{-\i\lambda t}\bc^\alpha(-\lambda)\,a^\alpha_2(-\lambda,t)\right),
\end{eqnarray*}
where $\bc^\alpha$ is as in \eqref{e2.2} and $a^\alpha_2(\lambda,t)\in S^0_\lambda$, i.e.,
\begin{eqnarray*}
|\partial^k_\lambda\partial^l_t a^\alpha_2(\lambda,t)|\leq C_{k,l}(1+|\lambda|)^{-k}.
\end{eqnarray*}
\end{asparaenum}
\end{lem}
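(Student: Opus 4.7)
The plan is to deduce every assertion directly from \eqref{e2.6}, which writes $m^\alpha_t(\lambda)$ as a smooth prefactor
\begin{equation*}
F_\alpha(t):=2^{(n-2)/2+\alpha}\Gamma(n/2)\,\e^{\alpha t}(\e^t-1)^{-2\alpha}(\sinh t)^{\alpha-(n-2)/2}
\end{equation*}
times the Legendre function $P^{-\mu}_{-1/2+\i\lambda}(\cosh t)$, with $\mu:=\alpha+(n-2)/2$ and $\Re\mu>-1/2$ in our admissible range. A direct computation yields $F_\alpha(t)(\sinh t)^{\mu}\asymp\e^{-(n-1)t/2}$ for $t\ge(\log 2)/2$ and $F_\alpha(t)(\sinh t)^{\mu}\asymp 1$ as $t\to 0^+$, so the work reduces to analysing $(\sinh t)^{-\mu}P^{-\mu}_{-1/2+\i\lambda}(\cosh t)$ in the three regimes. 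For \emph{(i)}, I will start from the Mehler--Dirichlet representation
\begin{equation*}
P^{-\mu}_{-1/2+\i\lambda}(\cosh t)=\frac{\sqrt{2}\,(\sinh t)^{\mu}}{\sqrt{\uppi}\,\Gamma(\mu+1/2)}\int_0^t\frac{\cos(\lambda s)\,\d s}{(\cosh t-\cosh s)^{1/2-\mu}},
\end{equation*}
valid for $\Re\mu>-1/2$. Bounding $|\cos(\lambda s)|\le 1$ and using $\cosh t-\cosh s=2\sinh\tfrac{t+s}{2}\sinh\tfrac{t-s}{2}$ to compute the elementary integral gives $|P^{-\mu}_{-1/2+\i\lambda}(\cosh t)|\lesssim(1+t)(\sinh t)^{\Re\mu}$, the $1+t$ factor arising from combining the two exponential branches at large $t$; multiplying by $F_\alpha(t)$ yields \emph{(i)}.

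For \emph{(ii)}, with $0<t\le\uppi$ and $|\lambda|t\ge 1$, I will keep the Mehler--Dirichlet integral but exploit the oscillation of the cosine. Writing $\cos(\lambda s)=\tfrac12(\e^{\i\lambda s}+\e^{-\i\lambda s})$ and substituting $s=t-u$, the singularity of $(\cosh t-\cosh s)^{\mu-1/2}$ is concentrated at $u=0$, where $(\cosh t-\cosh(t-u))^{\mu-1/2}=(u\sinh t)^{\mu-1/2}\psi(u,t)$ with $\psi$ smooth in $(u,t)$ and $\psi(0,t)\equiv 1$. Pulling out $\e^{\pm\i\lambda t}$ and $(\sinh t)^{\mu-1/2}$, the leading piece becomes $\e^{\pm\i\lambda t}\int_0^t u^{\mu-1/2}\psi(u,t)\e^{\mp\i\lambda u}\,\d u$; Taylor-expanding $\psi$ in $u$ to order $N$, each monomial evaluates via $\int_0^\infty u^{\mu-1/2+j}\e^{\mp\i\lambda u}\,\d u=\Gamma(\mu+1/2+j)(\mp\i\lambda)^{-\mu-1/2-j}$, which after absorbing the prefactor $F_\alpha(t)(\sinh t)^{\mu-1/2}\asymp t^{-(\Re\mu+1/2)}\e^{-(n-1)t/2}$ produces an amplitude of the required size $(|\lambda|t)^{-\Re\alpha-(n-1)/2}$. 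The Taylor remainder contributes $O(u^{N+1})$ to the integrand and yields a gain $(|\lambda|t)^{-N-1}$, while the truncation from $[0,\infty)$ to $[0,t]$ is handled by one integration by parts (valid because $|\lambda|t\ge 1$) and likewise feeds $E^{\alpha,N}$. Derivatives in $\lambda$ bring down only explicit powers of $\lambda^{-1}$ from the gamma factors, and derivatives in $t$ fall on $\psi$ or the smooth prefactors and cost $t^{-1}$ each, yielding~\eqref{e2.601}. The delicate point I expect is the uniform bookkeeping of these derivative bounds in all $N,k,l$ and in the crossover range $t\sim|\lambda|^{-1}$.

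For \emph{(iii)}, with $t>(\log 2)/2$ so that $\e^{-2t}<1/2$, I will invoke the classical expansion of $P^{-\mu}_{-1/2+\i\lambda}(\cosh t)$ at infinity: writing $P$ as an explicit linear combination of two Legendre functions of the second kind and expanding each in its convergent series in $\e^{-2t}$ yields
\begin{equation*}
P^{-\mu}_{-1/2+\i\lambda}(\cosh t)=A(\mu,\lambda)\,\e^{(\i\lambda-1/2)t}\Phi(\lambda,t)+A(\mu,-\lambda)\,\e^{(-\i\lambda-1/2)t}\Phi(-\lambda,t),
\end{equation*}
where $A(\mu,\lambda)$ is a ratio of gamma functions proportional to $\bc^\alpha(\lambda)$ from~\eqref{e2.2}, and $\Phi(\lambda,t)=1+\sum_{k\ge 1}a_k(\lambda)\e^{-2kt}$ has coefficients $a_k$ which are rational in $\lambda$ with uniformly bounded $\lambda$-derivatives by the standard hypergeometric recursion, so $\Phi(\lambda,t)\in S^0_\lambda$ uniformly in $t\ge(\log 2)/2$. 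Multiplying by $F_\alpha(t)$ and combining the $\e^{-t/2}$ with $F_\alpha(t)\asymp\e^{-(n-2)t/2}$ yields the overall factor $\e^{-(n-1)t/2}$ and the stated decomposition, with $a^\alpha_2(\lambda,t)$ equal to $\Phi(\lambda,t)$ multiplied by the smooth $t$-dependent normalization.
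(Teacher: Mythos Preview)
Your strategy for (i) and (iii) coincides with the paper's: both use the Mehler--Dirichlet integral with the trivial bound $|\cos(\lambda s)|\le 1$ for (i), and both invoke the hypergeometric expansion of $P^{-\mu}_{-1/2+\i\lambda}(\cosh t)$ at infinity for (iii) (the paper cites Schindler for the identity you describe). For (ii) you take a genuinely different route. The paper quotes Schindler's ready-made expansion
\[
(\sinh t)^{1/2}P^{-\mu}_{-1/2+\i\lambda}(\cosh t)=t^{\mu+1/2}\sum_{j=0}^N t^{2j}\,b^\alpha_j(t)\,\J_{\mu+j}(\lambda t)+\mathfrak R^\alpha(\lambda,t),\qquad \J_m(r)=\int_{-1}^1\e^{\i vr}(1-v^2)^{m-1/2}\,\d v,
\]
and then feeds in the large-argument asymptotics of $\J_m$; this delivers explicit amplitudes (reused later in the paper, cf.\ \eqref{e2.702}) and packages the uniformity in small $t$ automatically. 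Your direct endpoint analysis of the Mehler--Dirichlet integral is more self-contained and amounts to rederiving that expansion by hand; it works, but two points need correction. First, the Mehler--Dirichlet formula carries $(\sinh t)^{-\mu}$, not $(\sinh t)^{\mu}$; once this sign is fixed the total prefactor after extracting $(\sinh t)^{\mu-1/2}$ from the integrand is $F_\alpha(t)(\sinh t)^{-1/2}$, which indeed has size $t^{-\Re\mu-1/2}$ for small $t$ and $\e^{-(n-1)t/2}$ for large $t$ (your stated asymptotics for $F_\alpha(t)(\sinh t)^{\mu}$ are not correct as written). Second, the Taylor coefficients of your $\psi(u,t)$ in $u$ carry powers of $\coth t\sim t^{-1}$, so they blow up as $t\to 0$; this is harmless because each extra $t^{-1}$ pairs with the extra $|\lambda|^{-1}$ from the gamma integral to give $(|\lambda|t)^{-1}$, but it forces the bookkeeping to be done in the scaled variable $\lambda t$, and the truncation from $[0,t]$ to $[0,\infty)$ then requires $O(N)$ integrations by parts (not one) to reach the $E^{\alpha,N}$ bound.
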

\begin{proof}
According to \cite[p. 156, (8)]{Er},
\begin{eqnarray*}
P_{-1/2+\i\lambda}^{-\alpha-(n-2)/2}(\cosh t)
=C_\alpha\, (\sinh t)^{-\alpha-(n-2)/2}\int_0^t(\cosh t-\cosh s)^{\alpha+(n-3)/2}\cos (\lambda s)\,\d s.
\end{eqnarray*}
A direct calculation shows that for any integer $k\geq 0$
\begin{eqnarray*}
|P_{-1/2+\i\lambda}^{-\alpha-(n-2)/2}(\cosh t)|\leq C_\alpha \begin{cases}
t^{\Re \alpha+(n-2)/2}&\mbox{for } 0<t\leq 1,\\
t\,\e^{-t/2}&\mbox{for }t>1.
\end{cases}
\end{eqnarray*}
Substituting this back into \eqref{e2.6},   we obtain  (i).

For part (ii) we invoke \cite[(2.4.1(5))]{Sc} to write that for $t\leq \uppi$,
\begin{equation} \label{e2.7}
(\sinh t)^{1/2}P_{-1/2+\i\lambda}^{-\alpha-(n-2)/2}(\cosh t)
=t^{\alpha+(n-1)/2}\sum_{j=0}^N
t^{2j}b^\alpha_j(t)\,\J_{\alpha+(n-2)/2+j}(\lambda t)+ {\mathfrak R}^\alpha(\lambda,t),
\end{equation}
where    $|\d^l b^\alpha_j(t)/\d t^l|\leq C_{\alpha,l}$ for $l\geq 0$ uniformly in $j\geq 0$
(\cite[(2.4.1(4))]{Sc}),
\begin{eqnarray*}
\J_m(r)=\int_{-1}^1\e^{\i vr}(1-v^2)^{m-1/2}\,\d v
\end{eqnarray*}
and by \cite[(2.4.2(9))]{Sc},
$$
|{\mathfrak R}^\alpha(\lambda,t)|\leq C_{\alpha}(|\lambda|t)^{-\Re \alpha-(n-1)/2-N-1}.
$$
It's well known that for all $m\in \CC$ with $\Re m>-1/2$ and $r$ large,
\begin{eqnarray*}
\J_m(r)=\e^{\i r}\psi_m(r)+\e^{-\i r}\psi_m(-r)+ {\mathfrak O}_m(r),
\end{eqnarray*}
where $\psi_m(r)$ are smooth functions such that for all integer $k\geq 0$,
\begin{eqnarray}\label{e2.701}
\left|{\d^k\psi_m(r)\over\d r^k}\right|\leq C_m|r|^{-m-1/2-k}
\end{eqnarray}
and $|{\mathfrak O}_m(r)|\leq C_m|r|^{-m-1/2-N-1}$. See \cite[p. 51]{Io}.
Now set
\begin{equation}\label{e2.702}
a^{\alpha,N}_1(\lambda,t)=2^{{n-2\over 2}+\alpha}  \Gamma\! \left({n\over 2}\right)\e^{\alpha t}(\e^t -1)^{-2\alpha } (\sinh t)^{\alpha-{n-1\over 2}}t^{\alpha+(n-1)/2}\sum_{j=0}^Nt^{2j}b^\alpha_j(t)\,\psi_{\alpha+(n-2)/2+j}(\lambda t)
\end{equation}
and
\begin{eqnarray*}
E^{\alpha,N}(\lambda,t) &=& 2^{{n-2\over 2}+\alpha}  \Gamma\! \left({n\over 2}\right)\e^{\alpha t}(\e^t -1)^{-2\alpha } (\sinh t)^{\alpha-(n-1)/2}\\
&&\times
\left({\mathfrak R}^\alpha(\lambda,t)+t^{\alpha+(n-1)/2}\sum_{j=0}^Nt^{2j}b^\alpha_j(t)\,\mathfrak{O}_{\alpha+(n-2)/2+j}(\lambda t)\right).
\end{eqnarray*}
Substitute  \eqref{e2.7}  back into \eqref{e2.6}, we obtain \eqref{emmm}. Estimates \eqref{e2.601} follow from \eqref{e2.701} and the estimates on the error terms ${\mathfrak R}^\alpha$ and ${\mathfrak O}_m$.

To prove part (iii), it follows from the first equation on \cite[p. 262]{Sc} that
\begin{eqnarray}\label{e2.9}
(\sinh t)^{1/2}P^{-\alpha-(n-2)/2}_{1/2+\i\lambda}(\cosh t)
&=& \bc^\alpha(\lambda)\,\e^{\i\lambda t}F\!\left(-\alpha-{n-3\over 2},\alpha+{n-1\over 2};1-\i\lambda;{-1\over \e^{2t}-1}\right)\notag\\
&&+ \bc^\alpha(-\lambda)\,\e^{-\i\lambda t}F\!\left(-\alpha-{n-3\over 2},\alpha+{n-1\over 2};1+\i\lambda;{-1\over \e^{2t}-1}\right),
\end{eqnarray}
where $F$ is the classical hypergeometric function, which has the integral representation
\begin{eqnarray*}
F(a,b;c;\nu)={\Gamma(c)\over \Gamma(b)\Gamma(c-b)}\int_0^1s^{b-1}(1-s)^{c-b-1}(1-s\nu)^{-a}\,\d s\ \ \ \ \mbox{for }\Re c>\Re b>0
\end{eqnarray*}
by \cite[p. 59, (10)]{Er}. For $\zeta\in \CC$ and $k\in{\mathbb N}$, we denote $
(\zeta)_k=\prod_{l=1}^k(\zeta+l-1)
$.
It follows from  \cite[p. 76, (10)]{Er} that   for $t>(\log 2)/2$,
\begin{eqnarray*}
F\!\left(-\alpha-{n-3\over 2},\alpha+{n-1\over 2};1+\i\lambda;{-1\over \e^{2t}-1}\right)
=\sum_{k\geq 0}{(-\alpha-(n-3)/2)_k(\alpha+(n-1)/2)_k\over (1+\i\lambda)_k\,k!}\left({-1\over \e^{2t}-1}\right)^k
\end{eqnarray*}
belongs to $S^0_\lambda$. Now we substitute  \eqref{e2.9} back into \eqref{e2.6} to  write
\begin{eqnarray*}
m^\alpha_t(\lambda)=\e^{-(n-1)t/2}\left(\e^{\i\lambda t}\bc^\alpha(\lambda)\,a^\alpha_2(\lambda,t)+\e^{-\i\lambda t}\bc^\alpha(-\lambda)\,a^\alpha_2(-\lambda,t)\right),
\end{eqnarray*}
where
\begin{eqnarray*}
a^\alpha_2(\lambda,t)&=&2^{(n-2)/2+\alpha}\Gamma\! \left({n\over 2}\right)
\e^{(\alpha+(n-1)/2)t}  (\e^t -1)^{-2\alpha} (\sinh t)^{\alpha-(n-1)/2}\\
&&\times F\!\left(-\alpha-{n-3\over 2},\alpha+{n-1\over 2};1-\i\lambda;{-1\over \e^{2t}-1}\right)
\end{eqnarray*}
belongs to $S^0_\lambda$. The proof of (iii) is concluded.
\end{proof}
\begin{rk}\label{rk:2.5}
When $\alpha=0$, $m^\alpha_t(\lambda)$ is exactly a constant multiple of the spherical function $\varphi_\lambda(t)$. Therefore, $\varphi_\lambda(t)$ has the above asymptotics in Lemma \ref{lem:2.3} with $\alpha=0$.
\end{rk}

\medskip


\section{Proof of Theorem \ref{thm:1.1}}\label{sec:4}
\setcounter{equation}{0}

To prove Theorem \ref{thm:1.1}, it reduces to show  the following Lemmas~\ref{le4.1}, \ref{le4.2} and \ref{le4.3}. To do this,  we set $\rho(\cdot)=d(\cdot,\0)$ in which $\0=(1, 0, \cdots, 0)$.

Note that for $z,w\in\H^n$
\begin{eqnarray*}
[\e^tz-w]=2\e^t(\cosh t-\cosh d(z,w)).
\end{eqnarray*}
Also, for $0\leq s\leq t\leq 1$, we have
\begin{eqnarray*}
\cosh t-\cosh s={\e^t\over 2}(1-\e^{s-t})\,(1-\e^{-s-t})\sim t^2-s^2.
\end{eqnarray*}
These relations will be used often in this section.

\begin{lem}\label{le4.1}
Suppose $\alpha\in\R$ and $\fm^\alpha$ is bounded on $L^p(\H^n)$ for $1<p<\infty$.
Then we must have $\alpha>1-n+n/p$.
\end{lem}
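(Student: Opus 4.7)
The plan is to contradict the hypothesized $L^p$-bound by testing on the indicators $f_\delta=\chi_{B_\delta(\0)}$ of small geodesic balls and letting $\delta\to 0^+$. Using polar coordinates $\d z=(\sinh r)^{n-1}\d r\,\d\omega$ and $\sinh r\sim r$ near $0$, one has $\|f_\delta\|_{L^p(\H^n)}\simeq \delta^{n/p}$. The core is a pointwise lower bound for $\fm^\alpha f_\delta(z)$ on a shell $\{c\delta\leq \rho(z)\leq 1\}$ for some suitable absolute constant $c$, followed by polar integration.

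For the pointwise bound, fix $z$ with $\rho(z)=r_0\in[c\delta,1]$ and, by rotation invariance, take $z=(\cosh r_0,\sinh r_0,0,\ldots,0)$. Choose the test time $t=r_0$, so that $\0\in\partial B_{r_0}(z)$. In geodesic normal coordinates $w=\exp_{\0}(v)$ around $\0$, Taylor expansion gives $d(z,w)=r_0-v_1+O(|v|^2)$, hence $\cosh r_0-\cosh d(z,w)=r_0 v_1+O(|v|^2)$ and $[\e^{r_0}z-w]=2\e^{r_0}(\cosh r_0-\cosh d(z,w))\simeq 2r_0 v_1$ uniformly on the wedge $W_\delta=\{v:|v|\leq\delta,\ v_1\geq |v|/2\}\subset B_{r_0}(z)\cap B_\delta(\0)$, provided $c$ is chosen large enough. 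The prefactors in \eqref{e1.8} are $\simeq r_0^{-2\alpha-n+2}$ for $r_0\leq 1$. Restricting the integral to $W_\delta$ and writing $v=|v|\hat v$ in spherical coordinates yields
\begin{equation*}
M^\alpha_{r_0}f_\delta(z)\gtrsim r_0^{-2\alpha-n+2}\int_{W_\delta}(r_0 v_1)^{\alpha-1}\d v\simeq r_0^{-\alpha-n+1}\delta^{\alpha+n-1},
\end{equation*}
since the radial factor $\int_0^\delta |v|^{\alpha+n-2}\d|v|\simeq \delta^{\alpha+n-1}$ (valid for $\alpha>1-n$) and the angular factor $\int_{\hat v_1\geq 1/2}\hat v_1^{\alpha-1}\d\hat v$ is a positive finite constant.

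Thus $\fm^\alpha f_\delta(z)\gtrsim \rho(z)^{-\alpha-n+1}\delta^{\alpha+n-1}$ on the shell, and polar integration gives
\begin{equation*}
\|\fm^\alpha f_\delta\|_{L^p(\H^n)}^p\gtrsim\delta^{(\alpha+n-1)p}\int_{c\delta}^1 r^{-(\alpha+n-1)p+n-1}\d r.
\end{equation*}
When $\alpha<1-n+n/p$, the radial exponent exceeds $-1$, the integral is bounded below by a positive constant, and $\|\fm^\alpha f_\delta\|_{L^p}/\|f_\delta\|_{L^p}\gtrsim \delta^{\alpha+n-1-n/p}\to+\infty$ as $\delta\to 0^+$. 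At the endpoint $\alpha=1-n+n/p$, the exponent equals $-1$, so the integral is $\log(1/\delta)$ and the ratio grows like $\log^{1/p}(1/\delta)\to+\infty$. Either outcome contradicts $L^p$-boundedness, forcing $\alpha>1-n+n/p$.

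The main technical point is extending the integral lower bound to $\alpha\leq 0$, where \eqref{e1.8} is defined only via analytic continuation. This is handled by mollifying $f_\delta$ to $f_\delta*\phi_\varepsilon\in C^\infty_c$, on which $M^\alpha_t$ acts through the multiplier representation $m^\alpha_t(D)$ of Lemma~\ref{lem:2.2} for all admissible $\alpha$; the same pointwise computation yields the same lower bound uniformly in $\varepsilon$, and the conclusion follows by letting $\varepsilon\to 0$.
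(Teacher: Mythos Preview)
Your route is genuinely different from the paper's. The paper tests against
$f_\delta(w)=\rho(w)^{1-n-\alpha}(-\log\rho(w))^{-1}$ supported on a narrow cone $\Gamma$ with vertex $\0$, intersected with the annulus $B_{1/2}(\0)\setminus B_\delta(\0)$, and evaluates $M^\alpha_{\rho(z)}f_\delta(z)$ for $z$ in a \emph{fixed} small ball about a point at distance $\approx\tfrac12$ from $\0$: the cone forces $\rho(z)-d(z,w)\sim\rho(w)$, and the logarithmic weight is tuned so that $\|f_\delta\|_p$ stays bounded precisely when $\alpha\le 1-n+n/p$ while $\|\fm^\alpha f_\delta\|_p\gtrsim\log\log(1/\delta)$. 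You instead use the simpler $f_\delta=\chi_{B_\delta(\0)}$, let $z$ sweep a shell, and read off the exponent --- the classical Stein counterexample transplanted to $\H^n$. It is more elementary and treats the endpoint through the same integral (now producing a $\log$). One structural advantage of the paper's construction, however, is that its $\supp f_\delta$ sits in the \emph{open} ball $B_{\rho(z)}(z)$ with $\rho(z)-d(z,w)\gtrsim\delta$; the kernel $[\e^tz-w]^{\alpha-1}$ is therefore continuous on $\supp f_\delta$, so the integral formula \eqref{e1.8} and the lower bound obtained by restricting to a sub-cone remain literally valid for all real $\alpha>(1-n)/2$ away from the nonpositive integers.

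This is exactly where your last paragraph has a genuine gap. With $t=\rho(z)$ the sphere $\partial B_{\rho(z)}(z)$ passes through $\0$, so $\supp f_\delta=B_\delta(\0)$ meets the singular set of the kernel; for $\alpha\le 0$ the integral in \eqref{e1.8} does not converge over $B_\delta(\0)$ and ``the same pointwise computation'' no longer makes sense. Mollifying $f_\delta$ does not fix this --- the obstruction is the kernel singularity, not the regularity of $f$. An easy repair within your framework is to shift the radius to $t=\rho(z)+K\delta$ with $K>1$, so that $\overline{B_\delta(\0)}\subset B_t(z)$ and $[\e^tz-w]\sim r_0\delta$ uniformly on $B_\delta(\0)$; then \eqref{e1.8} is an absolutely convergent integral of fixed sign and gives $|M^\alpha_t f_\delta(z)|\simeq|\Gamma(\alpha)|^{-1}\,r_0^{-\alpha-n+1}\delta^{\alpha+n-1}$ for every $\alpha\in((1-n)/2,\infty)\setminus\{0,-1,\dots\}$, after which your shell integration goes through unchanged.
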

\begin{proof}
Let $\Gamma$ be the cone vertexed at $\0$ and tangent to the ball $B_{3c_1}(\0\cdot \fa(1/2))$, where $c_1$ is a constant to be determined and $\fa(r)$ is the Lorentz boost given by
\begin{eqnarray*}
\fa(r)=\left(\begin{array}{ccc}
\cosh r & \sinh r & 0 \\
\sinh r & \cosh r & 0 \\
0 & 0 &\Id_{n-1}
\end{array}\right).
\end{eqnarray*}
Recall that $\rho(\cdot)=d(\cdot,\0)$. Define $f_\delta$ by
\begin{eqnarray*}
f_\delta(w)=
\begin{cases}
\displaystyle{(\rho(w))^{1-n-\alpha}\over -\log \rho(w)}&\mbox{in }(B_{1/2}(\0)\setminus B_\delta(\0))\cap \Gamma,\\[6pt]
0&\mbox{outside }(B_{1/2}(\0)\setminus B_\delta(\0))\cap\Gamma
\end{cases}
\end{eqnarray*}
for $\delta$ sufficiently small.

Now we estimate $M^\alpha_{\rho(z)}(f_\delta)(z)$ for $z\in B_{c_1}(\0\cdot \fa(1/2))$. To do this we let $\widetilde{S}_z\subset\Gamma$ be a cone vertexed at $\0$ and tangent to $B_{c_1}(z)$. Set $S_z=B_{1/10}(\0)\cap \widetilde{S}_z$. We choose $c_1$ sufficiently small such that for all sufficiently small $\delta>0$ and $z\in B_{c_1}(\0\cdot a(1/2))$, $S_z\cap \supp f_\delta$ is contained in the interior of $B_{\rho(z)}(z)$, and that for $w\in S_z\cap \supp f_\delta$ we have
\begin{eqnarray*}
\rho(z)-d(z,w)\sim \rho(w)>0.
\end{eqnarray*}
Thus in view of \eqref{e1.8} for such $z$ we can write
\begin{eqnarray*}
|M^\alpha_{\rho(z)}(f_\delta)(z)|
&\geq &C_\alpha\int_{S_z\cap \supp f_\delta} (\cosh\rho(z)-\cosh d(z,w))^{\alpha-1}{(\rho(w))^{1-n-\alpha}\over -\log(\rho(w))}\,\d w\\
&\geq &C_\alpha\int_\delta^{1/10} r^{\alpha-1}{r^{1-n-\alpha}\over -\log r}r^{n-1}\,\d r\\
&\geq &C_\alpha\log\log(1/\delta)
\end{eqnarray*}
using polar coordinates. Hence
\begin{eqnarray*}
\|\fm^\alpha(f_\delta)\|_p\geq \left(\int_{B_{c_1}(\0\cdot \fa(1/2))}|M^\alpha_{\rho(z)}(f_\delta)(z)|^p\,\d z\right)^{1/p}\geq C_\alpha\log\log(1/\delta).
\end{eqnarray*}
On the other hand,
\begin{eqnarray*}
\|f_\delta\|_p&\leq &C\left(\int_\delta^{1/2}{r^{(1-n-\alpha)p}\over (-\log r)^p}r^{n-1}\,\d r\right)^{1/p}\\
&\leq & \begin{cases}
C_\alpha& \mbox{for }\alpha\leq 1-n+n/p,\\[4pt]
C_\alpha\delta^{1-n-\alpha+n/p}& \mbox{for }\alpha>1-n+n/p.
\end{cases}
\end{eqnarray*}
If $\|\fm^\alpha(f)\|_p\leq C\|f\|_p$ for all $f$, then we must have $\alpha>1-n+n/p$.
\end{proof}

\begin{lem}\label{le4.2} 	Suppose $\alpha\in\R$ and $\fm^\alpha$ is bounded on $L^p(\H^n)$ for $1<p<\infty$.
Then   we must have $\alpha\geq1/p-(n-1)/2$.
\end{lem}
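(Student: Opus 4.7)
The plan is to construct a family of test functions $\{f_R\}_{R\geq 1}$, parametrized by a large Helgason-spectral scale $R$, and argue by contradiction: if $\fm^\alpha$ is bounded on $L^p(\H^n)$, then the ratio $\|\fm^\alpha f_R\|_p/\|f_R\|_p$ must stay bounded as $R\to\infty$, forcing $\alpha \geq 1/p - (n-1)/2$. The method is modeled on the Euclidean counterexample of \cite{LiShSoYa}, adapted to the curved geometry of $\H^n$.

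First, I would take $f_R$ to be a Knapp-type wave packet of frequency $R$, say
\[
\F(f_R)(\lambda,\omega) = \chi(\lambda - R)\,\Psi(\omega),
\]
for suitable smooth bumps $\chi\in C_c^\infty(\R)$ and $\Psi\in C^\infty(\SS^{n-1})$. Lemma~\ref{lem:2.0} and the Helgason inversion formula allow one to control $\|f_R\|_p$ precisely. To estimate $M_t^\alpha f_R$, I would apply the asymptotic expansion of $m_t^\alpha(\lambda)$ in Lemma~\ref{lem:2.3}(ii): for $0<t\leq\pi$ and $\lambda t\gtrsim 1$, the multiplier is a principal $\e^{\pm\i\lambda t}(\lambda t)^{-(n-1)/2-\Re\alpha}$ plus lower-order and error terms. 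Substituting into the Helgason inversion and carrying out stationary phase, $M_t^\alpha f_R$ emerges as a wave packet of amplitude $(Rt)^{-(n-1)/2-\alpha}$ propagating at unit speed along geodesics emanating from $\0$.

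Next, I would extract the pointwise lower bound $\fm^\alpha f_R(z) \gtrsim (R\,d(z,\0))^{-(n-1)/2-\alpha}$ on a suitable tubular neighborhood of a geodesic ray, by choosing $t = d(z,\0)$ so that the wavefront of the wave packet passes through $z$ with constructive interference. Integrating the $p$-th power against the hyperbolic volume element $\sinh^{n-1}(r)\,\d r\,\d\omega$ over a short portion of the tube (confined to the near-Euclidean regime $r\lesssim 1$ where $\sinh^{n-1}(r)\sim r^{n-1}$), and comparing with $\|f_R\|_p$, yields the desired condition $\alpha \geq 1/p - (n-1)/2$ upon letting $R\to\infty$.

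The main obstacle is extracting the sharp $1/p$ factor. A naive single wave packet of unit spatial extent yields only a weaker condition, since the pointwise amplitude $(Rt)^{-(n-1)/2-\alpha}$ decays in $t$ and the range of $t$ over which coherent phases persist is limited. To recover the correct exponent, one typically must instead use a coherent superposition of wave packets distributed along the geodesic ray, so that the maximal operator picks out each packet at its own $t$ and effectively integrates along the full length of the ray. As a secondary technical point, one must verify that the exponentially damped regime $t>\log 2$ from Lemma~\ref{lem:2.3}(iii) (a feature special to the hyperbolic setting) does not spoil the lower bound derived from the near-Euclidean regime; the exponential decay in fact makes that regime essentially negligible compared with the near-Euclidean contribution.
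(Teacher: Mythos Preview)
Your Fourier-side construction, as stated, does not produce the Knapp geometry that the bound $\alpha\geq 1/p-(n-1)/2$ requires. With $\Psi$ a \emph{fixed} angular bump, $f_R$ carries no angular concentration at scale $R^{-1/2}$, so $M^\alpha_t f_R$ for $t\sim 1$ is large on a (partial) annulus of essentially unit width rather than on a thin tube around a single geodesic ray; comparing norms over such a region cannot recover the sharp $1/p$ exponent. You recognize this yourself (``the main obstacle is extracting the sharp $1/p$ factor'') and propose instead a superposition of packets along a ray, but that remedy is never carried out and is in any case not the mechanism behind this particular necessary condition. To make the spectral approach genuinely Knapp-like one would need $\Psi=\Psi_R$ supported on a cap of angular radius $\sim R^{-1/2}$, and one would then still have to control $\|f_R\|_p$ for $p\neq 2$, which is not immediate on the Helgason side.

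The paper bypasses all of this with a direct physical-space Knapp example, using no Fourier analysis whatsoever. One sets $g_j=\indicator_{R_j}$ where $R_j=\{(w_0,w_1,w'')\in\H^n:|w_1-2\cdot 2^{-j}|\leq 2^{-j},\ |w''|\leq c_2\,2^{-j/2}\}$, a box of dimensions roughly $2^{-j}\times (2^{-j/2})^{n-1}$ near $\0$, so $\|g_j\|_p\leq C\,2^{-(n+1)j/(2p)}$. For $z$ with $\rho(z)\sim 1$ and $|z''|\leq c_2\,2^{-j/2}$ (a tube of measure $\sim 2^{-(n-1)j/2}$), one evaluates $M^\alpha_{\rho(z)}(g_j)(z)$ directly from the integral formula \eqref{e1.8}: each $w\in R_j$ satisfies $\rho(z)-d(z,w)\sim 2^{-j}$, hence $(\cosh\rho(z)-\cosh d(z,w))^{\alpha-1}$ is of order $2^{-(\alpha-1)j}$, and integrating over $R_j$ gives $|M^\alpha_{\rho(z)}(g_j)(z)|\geq C_\alpha\,2^{-(\alpha+(n-1)/2)j}$. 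Thus $\|\fm^\alpha g_j\|_p\geq C_\alpha\,2^{-(n-1)j/(2p)}\,2^{-(\alpha+(n-1)/2)j}$, and comparing with $\|g_j\|_p$ forces $\alpha\geq 1/p-(n-1)/2$ as $j\to\infty$. No multiplier asymptotics, no stationary phase---just one elementary geometric estimate on a Knapp box.
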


\begin{proof}
For $j\in {\mathbb N}$ sufficiently large, we set $R_j=\{(w_0,w_1,w'')\in\H^n:|w_1-2\cdot2^{-j}|\leq 2^{-j},|w''|\leq c_22^{-j/2}\}$, where $c_2$ is a constant to be determined. We define $g_j(w)=\indicator_{R_j}(w)$. Recall that $\rho(\cdot)=d(\cdot,\0)$.

Now we consider $z=(z_0,z_1,z'')\in\H^n$ with $\rho(z)\sim 1$ and $|z''|\leq c_22^{-j/2}$. We choose $c_2$ sufficiently small such that for all sufficiently large integer $j$ and $z$ as above, $\supp g_j$ is contained in the interior of $B_{\rho(z)}(z)$. Under this circumstance, if $w\in \supp g_j$, then we have $\rho(z)-d(z,w)>0$. Thus in view of \eqref{e1.8} for such $z$ we can write
\begin{eqnarray*}
|M^\alpha_{\rho(z)}(g_j)(z)|&\geq &C_\alpha\int_{\supp g_j}(\cosh \rho(z)-\cosh d(z,w))^{\alpha-1}\,\d w\\
&\geq &C_\alpha\int_{\rho(z)-c2^{-j}}^{\rho(z)-c'2^{-j}}2^{-(n-1)j/2}(\rho(z)-r)^{\alpha-1}\,\d r\\
&\geq &C_\alpha2^{-(\alpha+(n-1)/2)j}
\end{eqnarray*}
using polar coordinates, with $c,c'>0$ independent of $j$. Hence
\begin{eqnarray*}
\|\fm^\alpha(g_j)\|_p&\geq &\left(\int_{\{z=(z_0,z_1,z'')\in\H^n:\rho(z)\sim1,|z''|\leq c_22^{-j/2}\}}|M^\alpha_{\rho(z)}(g_j)(z)|^p\,\d z\right)^{1/p}\\
&\geq &C_\alpha2^{-(n-1)j/(2p)}2^{-(\alpha+(n-1)/2)j}.
\end{eqnarray*}
On the other hand, $\|g_j\|_p\leq C2^{-(n+1)j/(2p)}$. If $\|\fm^\alpha(f)\|_p\leq C\|f\|_p$ for all $f$, then we must have
\begin{eqnarray*}
2^{-(n-1)j/(2p)}2^{-(\alpha+(n-1)/2)j}\leq 2^{-(n+1)j/(2p)},
\end{eqnarray*}
which implies exactly $\alpha\geq1/p+(1-n)/2$.
\end{proof}

\begin{lem}\label{le4.3} 	Suppose $\alpha\in\R$ and $\fm^\alpha$ is bounded on $L^p(\H^n)$ for $1<p
<\infty$.
Then   we must have $\alpha\geq(1-n)/p$.
\end{lem}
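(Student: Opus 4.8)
The plan is to freeze the dilation at $t=1$ and test the resulting averaging operator against a function concentrated in a thin shell around the unit geodesic sphere centred at $\0$; this is the hyperbolic counterpart of the $\alpha\ge-(n-1)/p$ necessary condition on $\R^n$ from \cite{LiShSoYa}. Since $|M^\alpha_1(f)(z)|\le\fm^\alpha(f)(z)$ for every $z$, boundedness of $\fm^\alpha$ on $L^p(\H^n)$ gives $\|M^\alpha_1(f)\|_p\le C\|f\|_p$. By Lemma~\ref{lem:2.2}, $M^\alpha_1=m^\alpha_1(D)$ is a radial Fourier multiplier, so $M^\alpha_1(f)(z)=\int_{\H^n}f(w)\,K_1(d(z,w))\,\d w$ with $K_1=\F^{-1}(m^\alpha_1)$. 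Comparing with the integral formula \eqref{e1.8} for $\Re\alpha>0$ and continuing analytically in $\alpha$, one sees that $\supp K_1\subseteq\{r\le1\}$ and that on $\{r<1\}$ the kernel $K_1$ coincides with the genuine function $K_1(r)=C_\alpha\,(\cosh1-\cosh r)^{\alpha-1}$, where $C_\alpha$ is a nonzero real constant (at least when $\alpha\notin\{0,-1,-2,\dots\}$). The point to retain is that, for $r$ slightly below $1$, $K_1(r)$ has a fixed sign and $|K_1(r)|\sim(1-r)^{\alpha-1}$.

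Fix a small $\delta>0$, put $E_\delta=\{w\in\H^n:1-2\delta\le\rho(w)\le1-\delta\}$, $g_\delta=\indicator_{E_\delta}$, and $F_\delta=\{z\in\H^n:\rho(z)\le\delta/4\}$; then $|E_\delta|\sim\delta$ and $|F_\delta|\sim\delta^n$, so $\|g_\delta\|_p\sim\delta^{1/p}$. If $z\in F_\delta$ and $w\in E_\delta$, the triangle inequality gives $d(z,w)<1$ and $1-d(z,w)\sim\delta$, hence $\cosh1-\cosh d(z,w)\sim\delta$, so that $|K_1(d(z,w))|\sim\delta^{\alpha-1}$ and $K_1(d(z,w))$ has the fixed sign of $C_\alpha$, uniformly in $w\in E_\delta$. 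Since all contributions have the same sign there is no cancellation, and
\[
|M^\alpha_1(g_\delta)(z)|=\int_{E_\delta}|K_1(d(z,w))|\,\d w\sim\delta^{\alpha-1}|E_\delta|\sim\delta^\alpha\qquad(z\in F_\delta),
\]
whence $\|M^\alpha_1(g_\delta)\|_p\gtrsim\delta^\alpha|F_\delta|^{1/p}\sim\delta^{\alpha+n/p}$. Feeding this and $\|g_\delta\|_p\sim\delta^{1/p}$ into $\|M^\alpha_1(g_\delta)\|_p\le C\|g_\delta\|_p$ yields $\delta^{\alpha+n/p}\lesssim\delta^{1/p}$ for all small $\delta$, and letting $\delta\to0$ forces $\alpha+n/p\ge1/p$, i.e. $\alpha\ge(1-n)/p$, which is the claim.

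The only delicate step is the absence of cancellation in the integral defining $M^\alpha_1(g_\delta)(z)$, which rests on the precise description of $K_1$ near its boundary singularity at $r=1$; the remaining ingredients are elementary geometry on $\H^n$. When $\alpha$ is a non-positive integer the constant $C_\alpha$ vanishes and $K_1$ is instead (a normal derivative of) the surface measure on $\{r=1\}$; for these finitely many values one reaches the same conclusion by replacing $g_\delta$ with a rescaled profile $w\mapsto\psi\!\big(\delta^{-1}(1-\rho(w))\big)$, the bump $\psi$ being chosen so that its pairing against $K_1$ is still of size $\delta^\alpha$ without sign cancellation.
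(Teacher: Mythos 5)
Your proof is correct and matches the paper's argument in all essentials: both test the frozen operator $M^\alpha_1$ against the indicator of a thin shell of width $\sim\delta$ (resp.\ $\varepsilon$) just inside the unit sphere, evaluate at points within distance $\sim\delta$ of the origin, and read off $\delta^{\alpha+n/p}\lesssim\delta^{1/p}$ from the fixed-sign kernel behavior $(\cosh 1-\cosh r)^{\alpha-1}\sim\delta^{\alpha-1}$. The one point where you go beyond the paper is in flagging the degeneracy of the $1/\Gamma(\alpha)$ prefactor at non-positive integers and sketching a workaround; the paper's write-up tacitly uses $(1.8)$ with an implicit nonzero constant and does not address this case.
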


\begin{proof}
Recall that $\rho(\cdot)=d(\cdot,\0)$. Set $E_\varepsilon=\{w\in\H^n:|\rho(w)-1+2\varepsilon|<\varepsilon\}$ for $\varepsilon$ sufficiently small. We define $h_\varepsilon(w)=\indicator_{E_\varepsilon}(w)$.

Now we consider $z\in B_{\varepsilon/100}(\0)$. Then it's obvious that $\supp h_\varepsilon$ is contained in the interior of $B_1(z)$, which indicates that if $w\in \supp h_\varepsilon$, then we have $1-d(z,w)>0$. Thus in view of \eqref{e1.8} for such $z$ we can write
\begin{eqnarray*}
|M^\alpha_1(h_\varepsilon)(z)|&\geq &C_\alpha\int_{\supp h_\varepsilon}(\cosh 1-\cosh d(z,w))^{\alpha-1}\,\d w\\
&\geq &C_\alpha\int_{1-\widetilde{c}\varepsilon}^{1-\widetilde{c}'\varepsilon}(1-r)^{\alpha-1}\,\d r\geq C_\alpha\varepsilon^\alpha
\end{eqnarray*}
using polar coordinates, with $\widetilde{c},\widetilde{c}'>0$ independent of $\varepsilon$. Hence
\begin{eqnarray*}
\|\fm^\alpha(h_\varepsilon)\|_p\geq \left(\int_{B_{\varepsilon/100}(\0)}|M^\alpha_1(h_\varepsilon)(z)|^p\,\d z\right)^{1/p}\geq C_\alpha\varepsilon^{n/p}\varepsilon^\alpha.
\end{eqnarray*}
On the other hand, $\|h_\varepsilon\|_p\leq C\varepsilon^{1/p}$. If $\|\fm^\alpha(f)\|_p\leq C\|f\|_p$ for all $f$, then we must have
\begin{eqnarray*}
\varepsilon^{n/p}\varepsilon^\alpha\leq \varepsilon^{1/p},
\end{eqnarray*}
which is exactly $\alpha\geq(1-n)/p$.
\end{proof}

\medskip


\section{Proof of Theorem \ref{thm:1.2}}\label{sec:5}
\setcounter{equation}{0}

To begin with, we state some facts about the local smoothing estimates for Fourier integral operators (\cite[Chapter 8]{So1}), which will be used in the proof of Theorem \ref{thm:1.2}.
To do this, we  let  $\psi_0,\psi_1\in C_c^\infty(\R^n)$  having sufficiently small supports, and suppose     $\Phi(x,y)$ is a smooth function defined on $\supp\psi_0\times\supp\psi_1$ such that the Monge-Ampere matrix associated to  $\Phi$  is non-singular:
\begin{equation}\label{e3.1}
\left|\det \!\left(\begin{array}{cc}
0 & \partial\Phi/\partial x \\
\displaystyle{\partial\Phi\over\partial y} & \displaystyle{\partial^2\Phi\over\partial x\partial y}
\end{array}\right)\right|\geq c>0, \ \ \ \ \ x\in\supp \psi_0,\ y\in\supp\psi_1,
\end{equation}
and there is a homogeneous function $q(x, \cdot)$ with $\rank \Hess q(x, \cdot)\,\equiv n-1$ such that
\begin{equation}\label{e3.2}
q\big(x,  \Phi_x^{'} (x,y)\big)\,\equiv 1, \ \ \ \ \ x\in\supp \psi_0, \ y\in\supp\psi_1.
\end{equation}
The  property \eqref{e3.1} is usually referred to as rotational curvature while the property
\eqref{e3.2} is a simplified version of Sogge's cinematic curvature hypothesis. It follows from \cite[p. 354]{So2} that the above $\Phi(x,y)$ in \eqref{e3.2} satisfies a Carleson-Sj\"olin type hypothesis.

Given a symbol $a(\lambda,t,x,y)\in S^m_\lambda$, i.e., $a\in C^\infty(\R\times\R\times\R^n\times\R^n)$ satisfying
\begin{equation}\label{e4.2.1}
|\partial^k_\lambda \partial^l_t\partial^\gamma_{x,y} a(\lambda,t,x,y)|\leq C_{k,\gamma,l}\,(1+|\lambda|)^{m-k}\qquad\mbox{for }x\in\supp\psi_0,\ y\in\supp\psi_1\mbox{ and }t\in I
\end{equation}
for some closed interval $I\subset\R$, we can define the associated Fourier integral operator by
\begin{eqnarray*}
F_t(f)(x)= \int_{\R^n} f(y)\,K_{t}(x,y)\,\d y,
\end{eqnarray*}
where
\begin{eqnarray}\label{e3.3}
K_{t}(x,y)=\psi_0(x) \,\psi_1(y)  \int_\R \e^{\i\lambda(t-\Phi(x,y))}a(\lambda,t,x,y)\,\d\lambda.
\end{eqnarray}

The following result concerning the local smoothing estimates for Fourier integral operators is a combination of \cite[Proposition 3.2]{BeHiSo} and  \cite[Theorem 1.4]{GaLiMiXi}. Recall that  $p_n$ is given in \eqref{e1.15}, i.e.,
$p_n={2(n+1)/ (n-1)} $ for $n\geq 3$;    $p_n=4$  for $n=2$.

\begin{lem}\label{prop:3.1}
Suppose $\Phi$ satisfies \eqref{e3.1}, \eqref{e3.2}   and $a(\lambda,t,x,y)$ satisfies \eqref{e4.2.1} with $t\in I$ for some closed interval $I\subset\R$. Then for any $s<1/p_n$ with $p_n$ as in \eqref{e1.15}
\begin{eqnarray}\label{e3.4}
\|F_t(f)(x)\|_{L^{p_n}_{x,t}(\R^n\times I)}\leq C_{m,s}\|f\|_{W^{m-(n-1)/p_n-s,p_n}(\R^n)}.
\end{eqnarray}
\end{lem}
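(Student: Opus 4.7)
The plan is to deduce Lemma~\ref{prop:3.1} as a direct consequence of the two cited local smoothing theorems after a standard Littlewood-Paley reduction.  The hypotheses \eqref{e3.1}--\eqref{e3.2} are precisely the rotational-plus-cinematic curvature conditions used by Sogge for Carleson-Sj\"olin type FIOs, so \cite[Proposition 3.2]{BeHiSo} applies when $n\ge 3$ (the bound $p_n=2(n+1)/(n-1)$ arising from $\ell^q$-decoupling) and \cite[Theorem 1.4]{GaLiMiXi} applies when $n=2$ (the bound $p_2=4$ arising from the Guth-Wang-Zhang framework), once the symbol has been normalized to order zero and dyadically localized in $\lambda$.

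Carrying this out, I would introduce a Littlewood-Paley partition $1=\sum_{j\ge 0}\beta_j(\lambda)$ with $\beta_j$ supported in $|\lambda|\sim 2^j$ for $j\ge 1$, write $a=\sum_j a_j$ with $a_j:=\beta_j\,a$, and decompose $F_t=\sum_j F^j_t$ accordingly.  The bounds \eqref{e4.2.1} show that $2^{-jm}a_j$ is a normalized symbol of order $0$ supported in a dyadic annulus, uniformly in $j\ge 1$.  Compactness of $\supp\psi_0$, $\supp\psi_1$, and $I$ reduces matters, via a finite partition of unity in $(x,y,t)$, to a single coordinate chart in which the two cited theorems apply.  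Each such theorem furnishes, for every $\varepsilon>0$,
\begin{eqnarray*}
\|F^j_t f\|_{L^{p_n}_{x,t}(\R^n\times I)}\le C_\varepsilon\,2^{j(m-(n-1)/p_n-1/p_n+\varepsilon)}\,\|f\|_{L^{p_n}(\R^n)}.
\end{eqnarray*}

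The final step is to sum over $j$ and repackage the $2^{-j/p_n}$ gain as a fractional Sobolev norm on $f$.  Since $f=\sum_j P_j f$ with $P_j$ standard Littlewood-Paley projections, applying the $j$-th estimate to $P_j f$ (or, equivalently, noting that $F^j_t$ is essentially frequency-supported at scale $2^j$) and choosing $\varepsilon=1/p_n-s>0$ produces the bound $2^{j(m-(n-1)/p_n-s)}\|P_j f\|_{L^{p_n}}$ for each summand, which then recombines into \eqref{e3.4} via the $L^{p_n}$ Littlewood-Paley characterization of $W^{m-(n-1)/p_n-s,p_n}(\R^n)$.  The main obstacle I anticipate is purely bookkeeping: both cited local smoothing theorems are proved with an unspecified $\varepsilon$-loss for arbitrary $\varepsilon>0$, and tracking this loss (rather than invoking any sharp endpoint result) is precisely what forces the strict inequality $s<1/p_n$ in the conclusion.
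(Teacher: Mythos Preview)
The paper does not give a proof of this lemma: as the sentence introducing it and the Remark immediately following it make explicit, \eqref{e3.4} is simply the special case $p=p_n$ of the local smoothing estimate \eqref{e3.5}, which is precisely what \cite[Proposition 3.2]{BeHiSo} establishes for $n\ge 3$ and \cite[Theorem 1.4]{GaLiMiXi} establishes for $n=2$. The ``combination'' is literally: quote the two theorems and set $p=p_n$.

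Your Littlewood-Paley reduction is therefore unnecessary, though not wrong in spirit. Since the cited theorems already deliver the Sobolev norm on the right-hand side, there is no need to decompose in $\lambda$, extract a dyadic $L^{p_n}\to L^{p_n}$ bound, and then resum. If you do insist on that route, the one genuinely hand-wavy step is the claim that ``$F^j_t$ is essentially frequency-supported at scale $2^j$'': the variable $\lambda$ in the oscillatory integral \eqref{e3.3} is not the Fourier variable of $f$, and identifying $\lambda$-localization with frequency localization of the input requires either rewriting the kernel in the form $\int e^{i\varphi(x,t,\xi)}b(x,t,\xi)\,\hat f(\xi)\,d\xi$ via the FIO calculus (as the paper itself does later, invoking \cite[Proposition 6.2.4]{So1}) or an explicit integration-by-parts argument showing that $F^j_t(I-\widetilde P_j)$ has a rapidly decaying kernel for a fattened Littlewood-Paley projector $\widetilde P_j$. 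That step is standard and fixable, but it is the only substantive content of your argument and you glossed over it; without it the triangle-inequality summation $\sum_j\|F^j_t f\|$ does not recover a Sobolev norm of $f$.
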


\smallskip

\begin{rk}   Beltran et al. \cite{BeHiSo} showed that for $n\geq 2$ and $2(n+1)/(n-1)\leq p<\infty$,
\begin{eqnarray}\label{e3.5}
\|F_t(f)(x)\|_{L^p_{x,t}(\R^n\times I)}\leq C_{m,s,p}\|f\|_{W^{m-(n-1)/p-s,p}(\R^n)},\ \ \ \ s<1/p
\end{eqnarray}
by establishing a variable coefficient version of the Wolff-type decoupling estimate of Bourgain-Demeter \cite{BD}. Later,   Gao et al. \cite{GaLiMiXi} improved this result for the case $n=2$, and they showed that \eqref{e3.5} holds for $4\leq p<\infty$ and $n=2$ by establishing   a variable coefficient version of the square function estimate of Guth-Wang-Zhang (see \cite{GWZ}).
\end{rk}

\smallskip
Let $\beta\in C_c^\infty(\R)$ with $\supp\beta\subset [-2,-1/2]\cup [1/2,2]$ such that $\sum_{j\in \Z}\beta(2^{-j}\cdot)\,\equiv 1$ on $\R\setminus\{0\}$. We set $\beta_j=\beta(2^{-j}\cdot)$ for all integrals $j>0$ and $\beta_0=1-\sum_{j\geq 1}\beta_j$. Define
\begin{eqnarray*}
F_{t,j}(f)(x)= \int_{\R^n} f(y)\,K_{t, j}(x,y)\,\d y,
\end{eqnarray*}
where
\begin{eqnarray*}
K_{t, j}(x,y)=\psi_0(x)\, \psi_1(y)  \int_\R \beta_j(\lambda)\,\e^{\i\lambda(t-\Phi(x,y))}a(\lambda,t,x,y)\,\d\lambda.
\end{eqnarray*}

The following  Proposition \ref{prop:3.3} can be obtained by minor modifications of the proof  of \cite[Theorems 6.3.1 and 8.3.1]{So1}, so we omit it here.

\begin{prop}\label{prop:3.3}
Suppose $\Phi$ satisfies \eqref{e3.1}, \eqref{e3.2}  and $a(\lambda,t,x,y)$ satisfies \eqref{e4.2.1} with $t\in I$ for some closed interval $I\subset\R$. Then for every sufficiently small $\delta>0$ we have
\begin{eqnarray}\label{e3.666}
\left\|\sup_{t\in I}|F_{t,j}(f)|\right\|_{p_n}\leq C_{m,\delta} 2^{(m-(n-1)/p_n+\delta)j}\|f\|_{p_n}.
\end{eqnarray}
As a consequence, we have
\begin{eqnarray}\label{e3.7}
\left\|\sup_{t\in I}|F_t(f)|\right\|_{p_n}\leq C_m\|f\|_{p_n}
\end{eqnarray}
provided that   $m< (n-1)/p_n$.
\end{prop}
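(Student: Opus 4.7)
The strategy is to combine Lemma~\ref{prop:3.1} with a ``Sobolev in $t$'' trick, exploiting the frequency localization $|\lambda|\sim 2^j$ built into the amplitude of $F_{t,j}$. The first step is to establish the two dyadic local smoothing estimates
\[
\|F_{t,j}(f)\|_{L^{p_n}_{t,x}(\R^n\times I)}\leq C\,2^{j(m-(n-1)/p_n-s)}\|f\|_{p_n},\qquad s<1/p_n,
\]
\[
\|\partial_t F_{t,j}(f)\|_{L^{p_n}_{t,x}(\R^n\times I)}\leq C\,2^{j(m+1-(n-1)/p_n-s)}\|f\|_{p_n},\qquad s<1/p_n.
\]
The crucial observation is that $\beta_j(\lambda)\,a(\lambda,t,x,y)$ is supported in $|\lambda|\sim 2^j$, so although $a\in S^m_\lambda$, the product $\beta_j a$ can be viewed as an element of $S^{m'}_\lambda$ for any $m'\in\R$ with symbol seminorms of size $O(2^{j(m-m')})$. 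Choosing $m'=(n-1)/p_n+s$ so that the target Sobolev space in Lemma~\ref{prop:3.1} collapses to $L^{p_n}$ yields the first bound; the second is obtained identically after noting that $\partial_t K_{t,j}$ has amplitude $\i\lambda\beta_j(\lambda)\,a+\beta_j(\lambda)\,\partial_t a$, whose leading term gains an extra factor $\lambda\sim 2^j$ on the support.

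Next I would use the elementary identity $|g(t)|^{p_n}=|g(t_0)|^{p_n}+p_n\int_{t_0}^{t}|g(s)|^{p_n-2}\Re(\bar g(s)\,g'(s))\,\d s$. Averaging over $t_0\in I$, taking the supremum in $t$, and applying H\"older's inequality in $s$ yields
\[
\sup_{t\in I}|g(t)|^{p_n}\leq \frac{1}{|I|}\|g\|_{L^{p_n}(I)}^{p_n}+p_n\|g\|_{L^{p_n}(I)}^{p_n-1}\|g'\|_{L^{p_n}(I)}.
\]
Applying this pointwise with $g(t)=F_{t,j}(f)(x)$, integrating in $x$, and using H\"older's inequality in $x$ with conjugate exponents $p_n/(p_n-1)$ and $p_n$ produces
\[
\left\|\sup_{t\in I}|F_{t,j}(f)|\right\|_{p_n}^{p_n}\leq \tfrac{1}{|I|}\|F_{t,j}(f)\|_{L^{p_n}_{t,x}}^{p_n}+p_n\|F_{t,j}(f)\|_{L^{p_n}_{t,x}}^{p_n-1}\|\partial_t F_{t,j}(f)\|_{L^{p_n}_{t,x}}.
\]
Plugging in the two dyadic bounds above, the second term dominates (being larger by the factor $2^j$ supplied by $\partial_t$), and taking $p_n$-th roots yields
\[
\left\|\sup_{t\in I}|F_{t,j}(f)|\right\|_{p_n}\leq C\,2^{j(m-(n-1)/p_n-s+1/p_n)}\|f\|_{p_n}.
\]
Setting $s=1/p_n-\delta$ with $\delta>0$ small then gives \eqref{e3.666}.

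To deduce \eqref{e3.7} I would sum over $j$: by Minkowski's inequality,
\[
\left\|\sup_{t\in I}|F_t(f)|\right\|_{p_n}\leq \sum_{j\geq 0}\left\|\sup_{t\in I}|F_{t,j}(f)|\right\|_{p_n}\leq C\|f\|_{p_n}+C\sum_{j\geq 1}2^{j(m-(n-1)/p_n+\delta)}\|f\|_{p_n},
\]
where the $j=0$ piece is trivially bounded because $\beta_0$ is compactly supported in $\lambda$, making $F_{t,0}$ a bounded operator on $L^{p_n}$ uniformly in $t\in I$. If $m<(n-1)/p_n$ one chooses $\delta<(n-1)/p_n-m$, so that the geometric series converges and \eqref{e3.7} follows. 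The only step that is not routine is the trade-off in the first paragraph, which converts Lemma~\ref{prop:3.1} into a genuine dyadic estimate with the correct $j$-scaling; once this observation is in place, the remainder of the argument is the standard Littlewood--Paley/Sobolev embedding template from \cite[\S\S 6.3, 8.3]{So1}.
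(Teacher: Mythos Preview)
Your proposal is correct and follows precisely the approach the paper points to: the paper omits the proof entirely, citing \cite[Theorems 6.3.1 and 8.3.1]{So1}, and your argument is exactly the execution of that template---feed the frequency-localized symbol $\beta_j a$ into Lemma~\ref{prop:3.1} to get the $L^{p_n}_{t,x}$ bounds for $F_{t,j}$ and $\partial_tF_{t,j}$, then upgrade to a sup-in-$t$ bound via the Sobolev/fundamental-theorem-of-calculus inequality, losing $2^{j/p_n}$ which is absorbed into the smoothing gain by taking $s=1/p_n-\delta$.
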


Now we apply Proposition~\ref{prop:3.3} to prove the following result, which implies Theorem \ref{thm:1.2} as a special case with $p=p_n$.  Precisely, we have

\begin{thm}\label{thm:5.1}
Let $p_n$ be as in \eqref{e1.15}. 	 Then $\fm^\alpha$ is bounded on $L^{p_n}(\H^n)$ whenever  $\Re\alpha>(2-n)/p_n-1/p_n^2$.
\end{thm}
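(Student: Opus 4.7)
The plan is to follow the scheme of Ionescu \cite{Io}, representing the hyperbolic multiplier $m^\alpha_t(D)$ as a Fourier integral operator on $\R^n$ via Iwasawa coordinates, and then applying the local-smoothing maximal estimate of Proposition \ref{prop:3.3}. First I would perform a Littlewood--Paley decomposition $m^\alpha_t = \sum_{j \geq 0} \beta_j m^\alpha_t$ in $\lambda$, handling the low-frequency piece $j = 0$ by a direct kernel estimate. For each $j \geq 1$ I would further dyadically decompose the time axis into small times $t \in [2^{-k-1}, 2^{-k}]$ for $k \geq 0$ and large times $t > 1$ in dyadic blocks, and prove a maximal bound for each piece whose growth in $(j,k)$ is summable.

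In the small-time, high-frequency regime ($k \leq j$), the asymptotic expansion in Lemma \ref{lem:2.3}(ii) exhibits the oscillatory structure $m^\alpha_{t,j}(\lambda) = \e^{\i\lambda t} a(\lambda,t) + \e^{-\i\lambda t} a(-\lambda,t) + E(\lambda,t)$ with amplitude bounded by $(|\lambda|t)^{-\Re\alpha - (n-1)/2}$ and a rapidly decaying remainder. The complementary range $k > j$, where $|\lambda|t < 1$, is controlled by trivial size estimates on $m^\alpha_{t,j}$. In the large-time regime, Lemma \ref{lem:2.3}(iii) provides a similar oscillatory expansion weighted by $\bc^\alpha(\pm\lambda)$ together with the decisive exponential decay $\e^{-(n-1)t/2}$, which is more than enough to sum the dyadic contributions once a maximal estimate is in hand on each piece.

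Next, in Iwasawa coordinates identifying $\H^n$ with $\R^{n-1}\times\R$ and Haar measure $\e^{-(n-1)y}\,\d x \d y$, I would absorb the weight via conjugation by $\e^{(n-1)y/p_n}$ to convert the $L^{p_n}(\H^n)$ bound into an unweighted $L^{p_n}(\R^n)$ bound on a fixed coordinate patch. Each localized dyadic piece of $M^\alpha_{t,j}$ is then represented in the form \eqref{e3.3}, with a phase function $\Phi(x,y)$ coming from the hyperbolic distance and amplitude in the symbol class determined by Lemma \ref{lem:2.3}. After an anisotropic rescaling $t \mapsto 2^k t$, $\lambda \mapsto 2^{-k}\lambda$ that normalizes the small-$t$, high-$\lambda$ regime to a unit-scale FIO, I would invoke Proposition \ref{prop:3.3} to obtain, for each $(j,k)$, a maximal $L^{p_n}$ bound enjoying the characteristic local-smoothing gain $(n-1)/p_n+\delta$.

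Finally, summing the dyadic bounds over $(j,k)$ yields the full maximal estimate. The weight shift from conjugating by $\e^{(n-1)y/p_n}$ contributes the term $(2-n)/p_n$ to the summability threshold (matching Kohen's exponent \eqref{e1.12}), while the local-smoothing gain of Proposition \ref{prop:3.3} compounded with the dyadic rescaling in $k$ produces the additional $1/p_n^2$ improvement, pinpointing exactly the hypothesis $\Re\alpha > (2-n)/p_n - 1/p_n^2$. The hard part will be the geometric reduction: verifying that the phase function $\Phi$ that arises in Iwasawa coordinates satisfies both the rotational curvature condition \eqref{e3.1} and the cinematic curvature condition \eqref{e3.2} on a suitable patch, and bookkeeping carefully how the weight $\e^{-(n-1)y}$ propagates through the symbol class under each anisotropic rescaling so that the sharp exponent $(2-n)/p_n - 1/p_n^2$ indeed emerges at the endpoint of summability.
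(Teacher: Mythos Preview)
Your plan has the right ingredients (Littlewood--Paley in $\lambda$, Iwasawa coordinates, local smoothing via Proposition~\ref{prop:3.3}) but misidentifies where the critical exponent $(2-n)/p_n-1/p_n^2$ comes from, and this leads to a genuine gap in the large-$t$ regime.

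For $t\gg 1$ you write that the factor $\e^{-(n-1)t/2}$ in Lemma~\ref{lem:2.3}(iii) is ``more than enough to sum the dyadic contributions once a maximal estimate is in hand.'' It is not. To apply Proposition~\ref{prop:3.3} one must first localize the kernel $\CK^\alpha_{t,j}(d(z,w))$ to $z,w$ lying in balls of bounded radius, and a ball of radius $\approx t$ in $\H^n$ contains $\sim\e^{(n-1)t}$ such balls. After the H\"older losses in the patching argument, the localized local-smoothing bound at $L^{p_n}$ carries a net factor $\e^{(n-1)(1-1/p_n)t}$, i.e.\ exponential \emph{growth} in $t$; this is exactly Lemma~\ref{lem:5.6}. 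Your proposed conjugation by $\e^{(n-1)y/p_n}$ does not evade this: it is just another bookkeeping of the same hyperbolic volume growth.

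The paper resolves this not by rescaling but by playing the local-smoothing bound off against a second estimate obtained by interpolating the elementary $L^\infty$ and $L^2$ bounds (Lemmas~\ref{lem:5.4} and~\ref{lem:5.5}). That interpolated bound has only the Kohen-type gain $2^{-(\Re\alpha+(n-2)/p_n)j}$ in $j$ but enjoys the favorable decay $\e^{-(n-1)t/p_n}$. Choosing the crossover $2^{j_0}\sim\e^{(n-1)p_n t}$ and summing the two bounds over $j\le j_0$ and $j>j_0$ is precisely what produces the extra $1/p_n^2$; see the proof of Proposition~\ref{prop:5.2}. Your proposal omits the $L^2$ and $L^\infty$ ingredients entirely, and the claim that the $1/p_n^2$ arises from ``dyadic rescaling in $k$'' for small $t$ is incorrect: the small-time part (Proposition~\ref{prop:5.7}) only requires $\Re\alpha>(1-n)/p_n$ and is never the bottleneck.
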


To prove Theorem~\ref{thm:5.1}, it is sufficient to show the following   Proposition~\ref{prop:5.2} for the ``global'' part
of the operator $\fm^\alpha$ 
and  Proposition~\ref{prop:5.7} for the ``local" part.  Roughly speaking, our approach in the proof of Proposition~\ref{prop:5.2}   is inspired by  an argument of Ionescu in \cite{Io}, in which the crucial step is to adopt the Iwasawa coordinates to use the local smoothing of  Fourier integral operators.
We will prove Proposition~\ref{prop:5.7}  by    using the local smoothing of Fourier integral operators.

\begin{prop}\label{prop:5.2}
Let $p_n$ be as in \eqref{e1.15}. Then for every $T\geq 10$ and every small $\delta>0$,
\begin{eqnarray}\label{e5.111}
\left\|\sup_{t\in[T,T+1]}|M^\alpha_t(f)|\right\|_{p_n}\leq C_{\alpha,\delta}\e^{-(\Re\alpha+(n-2)/p_n+1/p_n^2-\delta)(n-1)p_nT}\|f\|_{p_n}.
\end{eqnarray}
As a consequence, we have
\begin{eqnarray}\label{e5.1}
\left\|\sup_{t>10}|M^\alpha_t(f)|\right\|_{p_n}\leq C_\alpha\|f\|_{p_n}
\end{eqnarray}
provided that   $\Re\alpha>(2-n)/p_n-1/p_n^2$.
\end{prop}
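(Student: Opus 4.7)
The strategy, following Ionescu \cite{Io}, is to transfer $M^\alpha_t$ to a Fourier integral operator in Iwasawa (horospherical) coordinates and invoke the local smoothing estimate of Proposition \ref{prop:3.3}. First, for $T\geq 10$ and $t\in[T,T+1]$, apply Lemma \ref{lem:2.3}(iii) to write
\begin{equation*}
m^\alpha_t(\lambda) = \e^{-(n-1)t/2}\bigl(\e^{\i\lambda t}\bc^\alpha(\lambda)\,a^\alpha_2(\lambda,t) + \e^{-\i\lambda t}\bc^\alpha(-\lambda)\,a^\alpha_2(-\lambda,t)\bigr).
\end{equation*}
This extracts the decay $\e^{-(n-1)t/2}$ together with the two oscillatory phases $\e^{\pm\i\lambda t}$; by symmetry one may focus on the $+$ part, call it $M^{\alpha,+}_t$. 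Insert a dyadic partition $\{\beta_j\}_{j\geq 0}$ in $\lambda$ and write $M^{\alpha,+}_t=\sum_{j\geq 0}M^{\alpha,+}_{t,j}$. By Lemma \ref{lem:2.1}, $\beta_j(\lambda)\bc^\alpha(\lambda)\,a^\alpha_2(\lambda,t)$ is a $t$-smooth symbol of order $-(n-1)/2-\Re\alpha$ supported on $|\lambda|\sim 2^j$, and in particular satisfies the derivative bounds in \eqref{e4.2.1}.

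Next, I would realize each $M^{\alpha,+}_{t,j}$ as a Fourier integral operator on $\R^n$ via Iwasawa coordinates $(x,v)\in\R^{n-1}\times\R$ on $\H^n$. In these coordinates the invariant measure is $\e^{-(n-1)v}\,dx\,dv$, so the weight $\e^{-(n-1)v/p_n}$ provides an isometric identification $L^{p_n}(\H^n)\cong L^{p_n}(\R^n,dx\,dv)$. After this conjugation and a standard localization (the kernel is effectively supported in a region of $O(T)$-diameter), the kernel of $M^{\alpha,+}_{t,j}$ takes the form \eqref{e3.3} with phase $\lambda(t-\Phi(z,w))$, where $\Phi$ is the hyperbolic distance expressed in horospherical coordinates. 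The crucial geometric input is that $\Phi$ satisfies the rotational curvature condition \eqref{e3.1} and the cinematic curvature condition \eqref{e3.2}; both follow from the nonvanishing Gaussian curvature of geodesic spheres in $\H^n$ and can be verified by a direct Hessian computation in horospherical coordinates.

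With the curvature conditions in place, Proposition \ref{prop:3.3} applied to each dyadic piece yields, for every small $\delta>0$,
\begin{equation*}
\left\|\sup_{t\in[T,T+1]}|M^{\alpha,+}_{t,j}(f)|\right\|_{L^{p_n}(\H^n)} \leq C_\delta\,\e^{-(n-1)T/2}\,\kappa(T)\,2^{(-(n-1)/2-\Re\alpha-(n-1)/p_n+\delta)j}\|f\|_{L^{p_n}(\H^n)},
\end{equation*}
where $\kappa(T)$ is an exponential factor in $T$ accounting for the Iwasawa weight and the $O(T)$-diameter of the kernel's support. Summing in $j$ converges under the hypothesis on $\Re\alpha$, and assembling the constants produces \eqref{e5.111}; finally, summing \eqref{e5.111} as a geometric series in $T\in\{10,11,\ldots\}$ converges precisely when $\Re\alpha>(2-n)/p_n-1/p_n^2$, yielding \eqref{e5.1}. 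I expect the main obstacle to be the careful geometric verification of the cinematic curvature for $\Phi$ together with the bookkeeping needed to extract the exact exponent appearing in \eqref{e5.111}: one likely needs to partition the kernel into pieces on which $\Phi$ behaves like a suitably rescaled phase admitting uniform symbol estimates in the sense of \eqref{e4.2.1}, generalizing Ionescu's argument from the $n=2$, $\alpha=0$ setting to higher dimensions and complex $\alpha$.
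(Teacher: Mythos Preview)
Your outline captures the Iwasawa-coordinate reduction and the role of local smoothing, but it misses the mechanism that actually produces the exponent in \eqref{e5.111}, and as written cannot yield that estimate.

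First, a symbol-order correction: when you pass from the multiplier $m^\alpha_t$ to the convolution kernel via the inverse spherical transform you pick up the factors $\varphi_\lambda(r)\,|\bc(\lambda)|^{-2}$. Using $\varphi_\lambda(r)\approx \e^{-(n-1)r/2}\bc(-\lambda)\,\e^{-\i\lambda r}$ for large $r$ (Remark \ref{rk:2.5}) and $|\bc(\lambda)|^{-2}=(\bc(\lambda)\bc(-\lambda))^{-1}$, the amplitude in front of $\e^{\i\lambda(t-r)}$ becomes $\bc^\alpha(\lambda)/\bc(\lambda)\in S^{-\Re\alpha}_\lambda$, not $S^{-(n-1)/2-\Re\alpha}_\lambda$. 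Proposition \ref{prop:3.3} then gives the $j$-factor $2^{(-\Re\alpha-(n-1)/p_n+\delta)j}$, not the stronger one you wrote.

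Second, and this is the essential gap: the localization required to apply Proposition \ref{prop:3.3}---covering a ball of radius $\approx T$ by roughly $\e^{(n-1)T}$ balls of bounded radius and reassembling via H\"older---combined with the kernel factor $\e^{-(n-1)(d(z,w)+t)/2}$ and the Iwasawa measure weight, produces a net factor $\e^{(n-1)(1-1/p_n)T}$; this is exactly what your $\e^{-(n-1)T/2}\kappa(T)$ turns out to be (Lemma \ref{lem:5.6}). It is exponential \emph{growth} in $T$. If you simply sum in $j$ you obtain at best $C_\alpha\,\e^{(n-1)(1-1/p_n)T}$, a bound whose $T$-exponent is independent of $\Re\alpha$; the geometric series over $T$ then diverges for every $\alpha$, so \eqref{e5.1} cannot follow this way. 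The paper supplies the missing ingredient: an elementary $L^\infty$ bound $\|\fm^\alpha_{T,j}(f)\|_\infty\le C\,2^{-\Re\alpha j}\|f\|_\infty$ and a Plancherel-based $L^2$ bound $\|\fm^\alpha_{T,j}(f)\|_2\le C\,2^{-(\Re\alpha+(n-2)/2)j}\e^{-(n-1)T/2}\|f\|_2$ (Lemmas \ref{lem:5.4}--\ref{lem:5.5}), whose interpolation at $L^{p_n}$ gives $2^{-(\Re\alpha+(n-2)/p_n-\delta)j}\e^{-(n-1)T/p_n}$: exponential \emph{decay} in $T$, but $1/p_n$ worse decay in $j$. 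One then optimizes, using the interpolated bound for $j\le j_0$ and the local-smoothing bound for $j>j_0$ with $2^{j_0}\approx \e^{(n-1)p_nT}$. It is precisely this optimization---not any further rescaling of the phase---that converts the $j$-decay $2^{-(\Re\alpha+\cdots)j_0}$ into the $\Re\alpha$-dependent $T$-exponent in \eqref{e5.111}.
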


Let us see how to prove Proposition~\ref{prop:5.2}. 
Assume $T\geq 10$ and set
\begin{eqnarray*}
\fm^\alpha_T(f)(z)=\sup_{t\in[T,T+1]}|M^\alpha_t(f)(z)|.
\end{eqnarray*}
It follows from  (iii) of Lemma \ref{lem:2.3} that  for $t\in[T,T+1]$,
\begin{eqnarray*}
m^\alpha_t(\lambda)=\e^{-(n-1)t/2}\left(\e^{\i\lambda t}\bc^\alpha(\lambda)\,a^\alpha_2(\lambda,t)+\e^{-\i\lambda t}\bc^\alpha(-\lambda)\,a^\alpha_2(-\lambda,t)\right),
\end{eqnarray*}
where $a^\alpha_2(\lambda,t)\in S^0_\lambda$ and $\bc^\alpha$ is as in \eqref{e2.2}. Set $\phi_s\in C^\infty_c(\R)$, $0\leq \phi_s\leq 1$, $\phi_s\equiv 1$ on $[0,s+1]$ and $\phi_s\equiv 0$ outside $(-1,s+2)$.
The     convolution kernel $K^\alpha_t(r)$ of $M^\alpha_t$    can be written as the following:
\begin{eqnarray*}
K^\alpha_t(r) = \F^{-1}(m^\alpha_t)(r)
=\phi_T(r)\int_0^\infty m^\alpha_t(\lambda)\,\varphi_\lambda(r)\,|\bc(\lambda)|^{-2}\,\d \lambda 
\end{eqnarray*}
where we used the fact that   $\supp K^\alpha_t(r)\subset \{r:0\leq r\leq t\}$, see \eqref{e1.8}. Define
\begin{eqnarray}\label{ekkk}
K^\alpha_{t,j}(r) = \phi_T(r)\int_0^\infty \beta_j(\lambda)\, m^\alpha_t(\lambda)\,\varphi_\lambda(r)\,|\bc(\lambda)|^{-2}\,\d \lambda 
\end{eqnarray}
so that  $ K^\alpha_{t,j}(r) =\sum\limits_{j\geq 0} K^\alpha_{t,j}(r)$.  
By Lemma \ref{lem:2.2}
and the asymptotics of $m^\alpha_t$ and $\varphi_\lambda$ (see Lemma \ref{lem:2.3}(iii) and Remark \ref{rk:2.5}), we rewrite $  K^\alpha_{t,j}(r) = \CK^\alpha_{t,j}(r) +\CE^\alpha_{t,j}(r),
$
where
\begin{eqnarray}\label{e4.200}
\CK^\alpha_{t,j}(r)&=&
(\phi_T(r)-\phi_0(r))\,\e^{-(n-1)(r+t)/2} \notag \\
&& \times \int_\R\beta_j(\lambda)\,\bc^\alpha(\lambda)\left({\e^{\i\lambda(t+ r)}\over \bc(-\lambda)}a_2^0(\lambda,r)+{\e^{\i\lambda(t-r)}\over \bc(\lambda)}a_2^0(-\lambda,r)\right)a_2^\alpha(\lambda,t)\,\d \lambda
\end{eqnarray}
and
\begin{eqnarray}\label{e4.201}
\CE^\alpha_{t,j}(r)&=&
\phi_0(r)\,\e^{-(n-1)t/2}  \notag\\
&&\times\int_0^\infty \beta_j(\lambda)\left( \e^{\i\lambda t}\bc^\alpha(\lambda)\,a^\alpha_2(\lambda,t)+\e^{-\i\lambda t}\bc^\alpha(-\lambda)\,a^\alpha_2(-\lambda,t)\right)\varphi_\lambda(r)\,|\bc(\lambda)|^{-2}\,\d \lambda.
\end{eqnarray}
Hence,
\begin{eqnarray*}
\fm^\alpha_{T,j}(f)(z)=\sup_{t\in[T,T+1]}|f*K^\alpha_{t,j}(z)|\leq {\mathfrak A}^\alpha_{T,j}(f)(z)
+\fe^\alpha_{T,j}(f)(z),
\end{eqnarray*}
where
\begin{eqnarray*}
{\mathfrak A}^\alpha_{T,j}(f)(z)&=&\sup_{t\in[T,T+1]}|f*\CK^\alpha_{t,j}(z)|,\\
\fe^\alpha_{T,j}(f)(z)&=&\sup_{t\in[T,T+1]}|f*\CE^\alpha_{t,j}(z)|.
\end{eqnarray*}

We begin with the following lemma for the error term   $\fe^\alpha_{T,j}$.
\begin{lem}\label{lem:5.3}
For $1\leq p\leq\infty$, $0\leq \sigma<(n-1)/2$ and $j\geq 0$
\begin{eqnarray}\label{e00}
\|\fe^\alpha_{T,j}(f)\|_p\leq C_{\alpha,\sigma}2^{-(\Re\alpha+\sigma)j}\e^{-(n-1)T/2}\|f\|_p.
\end{eqnarray}
\end{lem}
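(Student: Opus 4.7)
The approach is to prove the kernel bounds
\[
\|\CE^\alpha_{t,j}\|_{L^1(\H^n)}+\|\partial_t\CE^\alpha_{t,j}\|_{L^1(\H^n)}\lesssim_{\alpha,\sigma} 2^{-(\Re\alpha+\sigma)j}\,\e^{-(n-1)t/2}
\]
uniformly for $t\in[T,T+1]$, and then pass to the supremum estimate via
\[
\sup_{t\in[T,T+1]}|f*\CE^\alpha_{t,j}(z)|\le|f*\CE^\alpha_{T,j}(z)|+\int_T^{T+1}|f*\partial_t\CE^\alpha_{t,j}(z)|\,\d t
\]
together with Young's convolution inequality $\|f*K\|_p\le\|K\|_1\|f\|_p$. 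This will yield \eqref{e00} simultaneously for all $1\le p\le\infty$. Because $\CE^\alpha_{t,j}(r)$ is supported in $r\in\supp\phi_0\subset(-1,2)$, where $(\sinh r)^{n-1}\sim r^{n-1}$, and the factor $\e^{-(n-1)t/2}$ pulls out of \eqref{e4.201}, the problem reduces to controlling by $2^{-(\Re\alpha+\sigma)j}$ an $r$-weighted inner $\lambda$-integral.

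Split the $r$-integration at $r=2^{-j}$. On $r\le 2^{-j}$, use the trivial bound $|\varphi_\lambda(r)|\le 1$ together with the symbol estimates $|\bc^\alpha(\lambda)|\lesssim|\lambda|^{-(n-1)/2-\Re\alpha}$ from Lemma \ref{lem:2.1} and $|\bc(\lambda)|^{-2}\lesssim|\lambda|^{n-1}$ from Lemma \ref{lem:2.0}; the resulting $\lambda$-integral on $|\lambda|\sim 2^j$ is of size $2^{j((n+1)/2-\Re\alpha)}$, and pairing with $\int_0^{2^{-j}}r^{n-1}\,\d r\sim 2^{-jn}$ produces a contribution $\lesssim 2^{-j((n-1)/2+\Re\alpha)}$, acceptable for any $\sigma\le(n-1)/2$. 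This is the piece that forces the threshold in the statement. On $r\in[2^{-j},2]$, expand $\varphi_\lambda(r)$ via Lemma \ref{lem:2.3}(ii) with $\alpha=0$ (cf.\ Remark \ref{rk:2.5}) as $\e^{\pm\i\lambda r}a^{0,N}_1(\pm\lambda,r)$ plus a remainder $E^{0,N}$ controlled by \eqref{e2.601}. Combined with the factor $\e^{\pm\i\lambda t}$ appearing in \eqref{e4.201}, the oscillatory pieces carry the total phases $\e^{\i\lambda(t\pm r)}$, whose phase derivatives $t\pm r$ satisfy $|t\pm r|\ge T-2\ge 8$ (using $T\ge 10$ and $r\le 2$). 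Integration by parts in $\lambda$ against a non-vanishing phase is thus available; the composite amplitude $\bc^\alpha(\lambda)\,a^\alpha_2(\lambda,t)\,a^{0,N}_1(\pm\lambda,r)\,|\bc(\lambda)|^{-2}$ is a classical symbol of order $-\Re\alpha$ in $\lambda$ on $|\lambda|\sim 2^j$ with an overall $r^{-(n-1)/2}$ factor, so $N_0$ integrations by parts produce an inner bound of order $2^{j(1-\Re\alpha-N_0)}\,r^{-(n-1)/2}$. Since $\int_{2^{-j}}^2 r^{(n-1)/2}\,\d r=O(1)$, choosing $N_0$ sufficiently large yields the desired $2^{-(\Re\alpha+\sigma)j}$; the $E^{0,N}$ contribution, handled directly with $N$ large, likewise respects this bound.

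The estimate for $\partial_t\CE^\alpha_{t,j}$ is identical, since $\partial_t\e^{\pm\i\lambda t}$ costs only one extra factor of $|\lambda|\sim 2^j$ in the amplitude, and this is absorbed by performing one additional integration by parts in the oscillatory regime. The main technical point is verifying that each constituent of the amplitude — $\bc^\alpha$, $|\bc|^{-2}$, $a^\alpha_2$, and $a^{0,N}_1(\cdot,r)$ — is a classical symbol in $\lambda$ so that each $\lambda$-derivative loses only $|\lambda|^{-1}$ on $|\lambda|\sim 2^j$; this is already encoded in Lemmas \ref{lem:2.0}, \ref{lem:2.1}, and \ref{lem:2.3}. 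The strict inequality $\sigma<(n-1)/2$ guarantees that no endpoint subtlety arises in either the low-$r$ regime or the $E^{0,N}$ remainder, so the argument proceeds without further delicate input.
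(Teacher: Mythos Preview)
Your proof is correct and follows essentially the same strategy as the paper's. Both arguments split according to whether $\lambda r$ is small or large (your sharp cutoff at $r=2^{-j}$ corresponds, since $|\lambda|\sim 2^j$ on $\supp\beta_j$, to the paper's smooth cutoff $\gamma(\lambda r)$), use the trivial bound $|\varphi_\lambda(r)|\le C$ together with the symbol estimates of Lemmas~\ref{lem:2.0} and~\ref{lem:2.1} on the small piece, and on the complementary piece insert the asymptotic expansion of $\varphi_\lambda(r)$ from Lemma~\ref{lem:2.3}(ii) (with $\alpha=0$) and integrate by parts in $\lambda$ against the non-stationary phase $\lambda(t\pm r)$, which is harmless since $|t\pm r|\ge T-2\ge 8$.

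One small simplification: your fundamental-theorem-of-calculus step for the supremum in $t$ is not needed. The integration-by-parts bound you derive depends on $t$ only through $\e^{-(n-1)t/2}$ and through powers of $|t\pm r|^{-1}$, and the latter is uniformly bounded for $t\in[T,T+1]$, $r\le 2$. Thus you already have a $t$-uniform pointwise bound on $|\CE^\alpha_{t,j}(r)|$; this is exactly what the paper records as \eqref{e000}, namely $\sup_{t\in[T,T+1]}|\CE^\alpha_{t,j}(r)|\le C_\alpha\,\phi_0(r)\,\e^{-(n-1)t/2}2^{-(\Re\alpha+\sigma)j}r^{-\sigma-(n+1)/2}$, after which the constraint $\sigma<(n-1)/2$ is precisely what makes $r^{-\sigma-(n+1)/2}(\sinh r)^{n-1}$ integrable at the origin. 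Your $\partial_t$ estimate is therefore redundant, though harmless.
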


\begin{proof}
By \eqref{e4.201} and Lemmas \ref{lem:2.1} and \ref{lem:2.3}(i)(iii),
\begin{eqnarray*}
\sup_{t\in[T,T+1]}|\CE^\alpha_{t,0}(r)|\leq C_\alpha\phi_0(r)\,\e^{-(n-1)T/2}.
\end{eqnarray*}
It follows from Schur's lemma  that \eqref{e00} holds  for the case $j=0$.

Now we assume $j\geq 1$. We take an even cutoff function $\gamma\in C^\infty_c(\R)$ with $\gamma\equiv 1$ on $[-1,1]$ and $\gamma\equiv0$ outside $(-2,2)$. By Lemma \ref{lem:2.3}(ii), we write
$$
\CE^\alpha_{t,j}(r)=\CE^{\alpha, 1}_{t,j}(r) +\CE^{\alpha, 2}_{t,j}(r)+ \CE^{\alpha, 3}_{t,j}(r)
$$
where
\begin{eqnarray*} 	
\CE^{\alpha, 1}_{t,j}(r) 
&=&\ \phi_0(r)\,\e^{-(n-1)t/2} \\
&&\times  \int_0^\infty \beta_j(\lambda)\,\gamma(\lambda r) \left( \e^{\i\lambda t}\bc^\alpha(\lambda)\,a^\alpha_2(\lambda,t)+\e^{-\i\lambda t}\bc^\alpha(-\lambda)\,a^\alpha_2(-\lambda,t)\right)\varphi_\lambda(r)\,|\bc(\lambda)|^{-2}\,\d \lambda,
\end{eqnarray*}
\begin{eqnarray*} 	
\CE^{\alpha, 2}_{t,j}(r) 
&=& \phi_0(r)\,\e^{-(n-1)t/2} \\ 
&&\times\int_\R\beta_j(\lambda)\,(1-\gamma(\lambda r))\left(\e^{\i\lambda (t+ r)}a^{0,N}_1(\lambda,r)+\e^{\i\lambda (t- r)}a^{0,N}_1(-\lambda,r)\right)\bc^\alpha(\lambda)\,a^\alpha_2(\lambda,t)\,|\bc(\lambda)|^{-2}\,\d \lambda,
\end{eqnarray*}

\begin{eqnarray*}	
\CE^{\alpha, 3}_{t,j}(r) 
&=&\phi_0(r)\,\e^{-(n-1)t/2}\\  
&&\times \int_0^\infty\beta_j(\lambda)\,(1-\gamma(\lambda r)) \left(\e^{\i\lambda t}\bc^\alpha(\lambda)\,a^\alpha_2(\lambda,t)+\e^{-\i\lambda t}\bc^\alpha(-\lambda)\,a^\alpha_2(-\lambda,t)\right)E^{\alpha,N}(\lambda,r)\,|\bc(\lambda)|^{-2}\,\d \lambda.
\end{eqnarray*} 	
By Lemmas \ref{lem:2.0}, \ref{lem:2.1} and \ref{lem:2.3} and integration by parts, taking $N$ sufficiently large, we obtain that $\CE^{\alpha, i}_{t,j}(r)\ (i=1,2,3)$ are all bounded by a constant multiple of $\phi_0(r)\,\e^{-(n-1)t/2}2^{-(\Re\alpha+\sigma)j}r^{-\sigma-(n+1)/2}$.
Hence, we obtain
\begin{eqnarray}\label{e000}
\sup_{t\in[T,T+1]}|\CE^\alpha_{t,j}(r)|\leq C_\alpha \phi_0(r)\,\e^{-(n-1)T/2}2^{-(\Re\alpha+\sigma)j}r^{-\sigma-(n+1)/2}.
\end{eqnarray}
Note that  $\Re\alpha>(1-n)/2$. By  using polar coordinates, one can derive
\begin{eqnarray*}
\sup_{z\in\H^n}\left\|\sup_{t\in[T,T+1]}|\CE^\alpha_{t,j}(d(z,\cdot))|\right\|_1+\sup_{w\in\H^n}\left\|\sup_{t\in[T,T+1]}|\CE^\alpha_{t,j}(d(\cdot,w))|\right\|_1
\leq C_{\alpha,\sigma}2^{-(\Re\alpha+\sigma)j}\e^{-(n-1)T/2}
\end{eqnarray*}
provided that  $\sigma<(n-1)/2$. Then we apply Schur's lemma to derive the desired result \eqref{e00}.
The proof of Lemma~\ref{lem:5.3} is concluded.
\end{proof}

Next we establish  $L^p$ boundedness of the operator $\fm^\alpha_{T,j}$ by  the following three Lemmas~\ref{lem:5.4}, \ref{lem:5.5} and \ref{lem:5.6} as three special cases $p= \infty, 2$ and $  p_n$, respectively.
Firstly, we have

\begin{lem}\label{lem:5.4}
For $T\geq 10$ and $j\geq 0$ we have
\begin{eqnarray*}
\|\fm^\alpha_{T,j}(f)\|_\infty\leq C_{\alpha}2^{-\Re\alpha j}\|f\|_\infty,
\end{eqnarray*}
\end{lem}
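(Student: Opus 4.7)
My plan is to bound the operator norm by the $L^1$-norm of its convolution kernel: since $\fm^\alpha_{T,j}(f)(z) = \sup_{t\in[T,T+1]}|f*K^\alpha_{t,j}(z)|$, Young's inequality gives
$$\|\fm^\alpha_{T,j}(f)\|_\infty \leq \|f\|_\infty\,\sup_{t\in[T,T+1]}\|K^\alpha_{t,j}\|_{L^1(\H^n)},$$
so it suffices to establish $\sup_{t\in[T,T+1]}\|K^\alpha_{t,j}\|_{L^1(\H^n)}\leq C_\alpha 2^{-\Re\alpha\, j}$. Using polar coordinates, $\|K^\alpha_{t,j}\|_1 = \omega_{n-1}\int_0^\infty |K^\alpha_{t,j}(r)|(\sinh r)^{n-1}\,\d r$, and I will decompose $K^\alpha_{t,j}=\CK^\alpha_{t,j}+\CE^\alpha_{t,j}$ as in \eqref{e4.200}--\eqref{e4.201} and control each piece separately.

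For the error piece $\CE^\alpha_{t,j}$, which is supported in $r\in(-1,2)$, the pointwise bound \eqref{e000} established in the proof of Lemma~\ref{lem:5.3} with $\sigma=0$ gives $|\CE^\alpha_{t,j}(r)|\leq C_\alpha\phi_0(r)\e^{-(n-1)t/2}2^{-\Re\alpha\, j}r^{-(n+1)/2}$. Since $(\sinh r)^{n-1}\sim r^{n-1}$ for small $r$, the weight $r^{(n-3)/2}$ is integrable near $0$, so $\|\CE^\alpha_{t,j}\|_1\lesssim \e^{-(n-1)t/2}2^{-\Re\alpha\, j}$, which is far better than required.

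For the main piece $\CK^\alpha_{t,j}$, whose support lies in $r\in[1,T+2]$, I will combine the weighted symbol estimates from Lemmas~\ref{lem:2.0} and~\ref{lem:2.1} with the $S^0_\lambda$-membership of $a^0_2$ and $a^\alpha_2$ from Lemma~\ref{lem:2.3}(iii) to obtain, for every $k\geq 0$,
$$\Bigl|\tfrac{\d^k}{\d\lambda^k}\bigl[\beta_j(\lambda)\tfrac{\bc^\alpha(\lambda)}{\bc(\pm\lambda)}a^0_2(\pm\lambda,r)\,a^\alpha_2(\lambda,t)\bigr]\Bigr|\leq C_{\alpha,k}\,2^{-(\Re\alpha+k)j},$$
uniformly for $\lambda$ in the support of $\beta_j$. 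Integration by parts $N$ times against the phases $\e^{\i\lambda(t\pm r)}$ then yields
$$\Bigl|\int_\R\beta_j(\lambda)\,\e^{\i\lambda(t\pm r)}\tfrac{\bc^\alpha(\lambda)}{\bc(\pm\lambda)}a^0_2(\pm\lambda,r)\,a^\alpha_2(\lambda,t)\,\d\lambda\Bigr|\leq C_{\alpha,N}\,2^{(1-\Re\alpha)j}\min\!\bigl(1,(2^j|t\pm r|)^{-N}\bigr).$$
On the $+$ branch we have $t+r\geq T+1\geq 11$, so this contribution decays like any negative power of $T$ and is negligible after multiplying by $\e^{-(n-1)(r+t)/2}(\sinh r)^{n-1}\sim \e^{(n-1)(r-t)/2}$ and integrating.

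For the $-$ branch, the main computation is a change of variables $u=t-r$: the combined exponential weight becomes $\e^{-(n-1)u/2}$, and
$$\|\CK^\alpha_{t,j}\|_1 \lesssim 2^{(1-\Re\alpha)j}\!\int_{t-T-2}^{\,t-1}\e^{-(n-1)u/2}\min\!\bigl(1,(2^j|u|)^{-N}\bigr)\,\d u.$$
The integrand is $\lesssim 1$ on $|u|\lesssim 2^{-j}$ (contributing $2^{-j}$) and decays like $(2^j|u|)^{-N}$ otherwise, with the exponential weight ensuring convergence on the $u>0$ side; choosing $N>1$, this integral is $\lesssim 2^{-j}$, yielding $\|\CK^\alpha_{t,j}\|_1\lesssim 2^{-\Re\alpha\, j}$ uniformly in $t\in[T,T+1]$. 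Adding the two pieces completes the proof. The main obstacle I anticipate is the bookkeeping for the non-oscillatory region $|t-r|\lesssim 2^{-j}$: the $\min$ trick converts the lack of stationary-phase cancellation there into exactly the $2^{-j}$ factor that absorbs the $2^j$ length of $\supp\beta_j$, and everything else is controlled by the exponential weights native to the hyperbolic measure.
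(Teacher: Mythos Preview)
Your proposal is correct and follows essentially the same approach as the paper: decompose $K^\alpha_{t,j}=\CK^\alpha_{t,j}+\CE^\alpha_{t,j}$, handle $\CE^\alpha_{t,j}$ via Lemma~\ref{lem:5.3}, and bound $\CK^\alpha_{t,j}$ by integration by parts against the phases $\e^{\i\lambda(t\pm r)}$ followed by an $L^1$ estimate in polar coordinates. The paper compresses your two-branch analysis into the single pointwise bound $|\CK^\alpha_{t,j}(r)|\leq C_{\alpha,N}\phi_T(r)\,\e^{-(n-1)r}2^{(1-\Re\alpha)j}(1+2^j|t-r|)^{-N}$ (the $+$ branch is absorbed since $|t+r|\geq |t-r|$ on the support), but the substance is identical. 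One small point: the pointwise bound \eqref{e000} you cite is only derived for $j\geq 1$; for $j=0$ use the separate estimate at the start of the proof of Lemma~\ref{lem:5.3}, or simply invoke Lemma~\ref{lem:5.3} itself with $\sigma=0$, as the paper does.
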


\begin{proof}
In view of \eqref{e4.200}, by Lemmas \ref{lem:2.1} and \ref{lem:2.3}(iii) and integration by parts, for $t\in[T,T+1]$ we have
\begin{eqnarray*}
|\CK^\alpha_{t,j}(r)|\leq C_{\alpha,N}\phi_T(r)\,\e^{-(n-1)r}2^{(1-\Re\alpha)j}(1+2^j|t-r|)^{-N}.
\end{eqnarray*}
Using polar coordinates and taking $N$ sufficiently large, for $t$ as above, we have
\begin{eqnarray*}
\sup_{w\in\H^n}\|\CK^\alpha_{t,j}(d(\cdot,w))\|_1
\leq  C_{\alpha}2^{-\Re\alpha j}.
\end{eqnarray*}
By H\"{o}lder's inequality,
\begin{eqnarray*}
\|{\mathfrak A}^\alpha_{T,j}(f)\|_\infty
\leq  \|f\|_\infty\sup_{w\in \H^n}\sup_{t\in[T,T+1]}\|\CK^\alpha_{t,j}(\cdot,w)\|_1\leq C_{\alpha}2^{-\Re\alpha j}\|f\|_\infty.
\end{eqnarray*}
This,  in combination with 
Lemma \ref{lem:5.3} for $\sigma=0$, 
concludes 
the proof of Lemma~\ref{lem:5.4}.
\end{proof}

\begin{lem}\label{lem:5.5}
For $T\geq 10$ and $j\geq 0$ we have
\begin{eqnarray*}
\|\fm^\alpha_{T,j}(f)\|_2\leq C_{\alpha}2^{-(\Re\alpha+(n-2)/2)j}\e^{-(n-1)T/2}(1+T)\,\|f\|_2.
\end{eqnarray*}
\end{lem}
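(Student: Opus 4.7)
The plan is to estimate $\fm^\alpha_{T,j}(f) \leq \mathfrak A^\alpha_{T,j}(f) + \fe^\alpha_{T,j}(f)$ separately in $L^2$. The error piece $\fe^\alpha_{T,j}(f)$ is handled immediately by Lemma~\ref{lem:5.3} with the choice $\sigma = (n-2)/2$, which lies in $[0,(n-1)/2)$ for every $n \geq 2$ and therefore yields $\|\fe^\alpha_{T,j}(f)\|_{2} \leq C_{\alpha}\,2^{-(\Re\alpha+(n-2)/2)j}\,\e^{-(n-1)T/2}\|f\|_{2}$, which is consistent with the claimed bound. The substance is therefore the $L^{2}$ estimate for $\mathfrak A^\alpha_{T,j}(f)$.

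To control the supremum over the unit interval $t \in [T, T+1]$, I would apply the elementary Sobolev-type identity
\begin{eqnarray*}
\sup_{t\in[T,T+1]}|g(t)|^{2} \leq |g(T)|^{2}+2\int_{T}^{T+1}|g(s)||g'(s)|\,\d s
\end{eqnarray*}
with $g(t) = (f\ast\CK^\alpha_{t,j})(z)$, then integrate in $z$ and invoke Cauchy--Schwarz in $z$. This gives $\|\mathfrak A^\alpha_{T,j}(f)\|_{2} \leq A+\sqrt{2AB}$, where $A = \sup_{t\in[T,T+1]}\|f\ast\CK^\alpha_{t,j}\|_{2}$ and $B = \sup_{t\in[T,T+1]}\|f\ast\partial_{t}\CK^\alpha_{t,j}\|_{2}$. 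Since $\CK^\alpha_{t,j}=K^\alpha_{t,j}-\CE^\alpha_{t,j}$ and $K^\alpha_{t,j}(r)=\phi_{T}(r)\F^{-1}(\beta_{j}m^\alpha_{t})(r)$, with $\phi_{T}\equiv1$ on the physical support $[0,t]$ of $K^\alpha_{t}$, Plancherel on the spherical Fourier side, combined with the bound on $\CE^\alpha_{t,j}$ provided by Lemma~\ref{lem:5.3}, reduces the task to estimating $\sup_{|\lambda|\sim 2^{j}}|m^\alpha_{t}(\lambda)|$ and $\sup_{|\lambda|\sim 2^{j}}|\partial_{t}m^\alpha_{t}(\lambda)|$ uniformly for $t\in[T,T+1]$.

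For $j\geq 1$, Lemma~\ref{lem:2.3}(iii) together with Lemma~\ref{lem:2.1} gives $|m^\alpha_{t}(\lambda)|\leq C_{\alpha}\,\e^{-(n-1)T/2}\,2^{-((n-1)/2+\Re\alpha)j}$, while a $t$-derivative extracts an extra factor $|\lambda|$ from $\partial_{t}\e^{\pm\i\lambda t}$ and leaves the $S^{0}_{\lambda}$ amplitudes untouched, so $|\partial_{t}m^\alpha_{t}(\lambda)|\leq C_{\alpha}\,\e^{-(n-1)T/2}\,2^{(1-(n-1)/2-\Re\alpha)j}$. Hence $\sqrt{AB}\leq C_{\alpha}\,\e^{-(n-1)T/2}\,2^{-((n-2)/2+\Re\alpha)j}\|f\|_{2}$, and this dominates $A$. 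For $j=0$, Lemma~\ref{lem:2.3}(i) instead provides only the uniform bound $|m^\alpha_{t}(\lambda)|\leq C_{\alpha}(1+t)\e^{-(n-1)t/2}$ (with an analogous bound on $\partial_{t}m^\alpha_{t}$), which is precisely what produces the $(1+T)$ factor in the statement. Combining the two regimes gives the estimate claimed.

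The principal technical obstacle is making the Plancherel reduction rigorous, because $\CK^\alpha_{t,j}$ is only the leading piece of the asymptotic decomposition of $\F^{-1}(\beta_{j}m^\alpha_{t})$ and the cutoff $\phi_{T}$ does not commute with the spherical Fourier transform. I would resolve this by pairing the oscillatory representation \eqref{e4.200} of $\CK^\alpha_{t,j}$ against the corresponding asymptotic for $\varphi_{\lambda}(r)$ supplied by Lemma~\ref{lem:2.3}(iii) with $\alpha=0$ and Remark~\ref{rk:2.5}, and then executing non-stationary integration by parts in the resulting $(\mu,r)$-integral; the relevant phase derivatives are $t\pm r$ in $\mu$ and $\lambda\pm\mu$ in $r$, and each is large away from the main contribution. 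This will show that $\F(\CK^\alpha_{t,j})(\lambda)$ agrees with $\beta_{j}(\lambda)m^\alpha_{t}(\lambda)$ up to an error with the same $L^{\infty}$ bound, closing the argument.
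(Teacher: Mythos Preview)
Your approach is correct and follows the same route as the paper: a Sobolev-type reduction (this is \cite[Lemma~2.4.2]{So1} in the paper's phrasing) to fixed-$t$ $L^2$ bounds, followed by Plancherel and the multiplier estimates from Lemmas~\ref{lem:2.1} and~\ref{lem:2.3}. Two organizational differences are worth noting.

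First, the paper does not pass through the $\CK^\alpha_{t,j}/\CE^\alpha_{t,j}$ splitting here at all; it applies the Sobolev inequality directly to $g(t)=(f*K^\alpha_{t,j})(z)$ and reduces to the two fixed-$t$ bounds \eqref{ekk} and \eqref{ekkkk}. This avoids invoking Lemma~\ref{lem:5.3} twice and, more to the point, avoids the need to control $\|f*\partial_t\CE^\alpha_{t,j}\|_2$, which Lemma~\ref{lem:5.3} does not supply and which your scheme (Sobolev applied to $\CK^\alpha_{t,j}$) would require.

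Second, the cutoff obstacle you correctly flag in your final paragraph is handled in the paper by the \emph{complementary kernel} $\widetilde K^\alpha_{t,j}:=(1-\phi_T)\,\F^{-1}(\beta_j m^\alpha_t)$: then $K^\alpha_{t,j}+\widetilde K^\alpha_{t,j}$ has spherical Fourier transform exactly $\beta_j m^\alpha_t$, so Plancherel is immediate for the sum, and one only needs $\|\F(\widetilde K^\alpha_{t,j})\|_\infty\le C_N\,2^{-Nj}(1+t)\,\e^{-(n-1)t/2}$, obtained from the oscillatory form of $\widetilde K^\alpha_{t,j}$ (supported on $r>T+1$) together with Lemma~\ref{lem:2.3}(i) and integration by parts. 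Your proposed $(\mu,r)$ integration-by-parts for $\F(\CK^\alpha_{t,j})$ is morally the same computation but packaged less efficiently. One caution: the remark ``$\phi_T\equiv1$ on the physical support $[0,t]$ of $K^\alpha_t$'' in your second paragraph does \emph{not} by itself justify Plancherel for $K^\alpha_{t,j}$, since the Littlewood--Paley piece $\F^{-1}(\beta_j m^\alpha_t)$ is not compactly supported; that is precisely the gap your final paragraph (or the paper's complementary-kernel device) has to fill.
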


\begin{proof}  By \cite[Lemma 2.4.2]{So1}, we see that  
Lemma~\ref{lem:5.5} is  a standard consequence of the following 
two estimates: 
\begin{eqnarray}\label{ekk}
\|f*K^\alpha_{t,j}\|_2\leq C_{\alpha}2^{-(\Re \alpha+(n-1)/2)j}\e^{-(n-1)T/2}(1+T)\,\|f\|_2 
\end{eqnarray}
and
\begin{eqnarray}\label{ekkkk}
\|\partial_t(f*K^\alpha_{t,j})\|_2\leq C_{\alpha}2^{-(\Re \alpha+(n-3)/2)j}\e^{-(n-1)T/2}(1+T)\,\|f\|_2 
\end{eqnarray}
for any $t\in[T,T+1]$, 
where 
$K^\alpha_{t,j}$ is defined in  \eqref{ekkk}.

Let us first verify \eqref{ekk}.
To do this, we set
\begin{eqnarray*}
\widetilde{K}^\alpha_{t,j}(r)=(1-\phi_T(r))\,\F^{-1}(\beta_jm^\alpha_t)(r)
\end{eqnarray*}
to be the complementary kernel of $K^\alpha_{t,j}$ such that $K^\alpha_{t,j}+\widetilde{K}^\alpha_{t,j}=\F^{-1}(\beta_j\,m^\alpha_t)$.
It follows from Lemmas~\ref{lem:2.1}, \ref{lem:2.3}(i)(iii)  and Plancherel's theorem that
\begin{equation}\label{e5.3}
\|f*(K^\alpha_{t,j}+\widetilde{K}^\alpha_{t,j})\|_2 \leq \|\beta_jm^\alpha_t\|_\infty \|f\|_2  \leq  C_\alpha 2^{-(\Re\alpha+(n-1)/2)j}(1+t)\,\e^{-(n-1)t/2} \|f\|_2.
\end{equation}
On  
the other 
hand, we apply  Lemmas~\ref{lem:2.0}, \ref{lem:2.3}(iii) to write
$$
\widetilde{K}^\alpha_{t,j}(r)=(1-\phi_T(r))\,\e^{-(n-1)(r+t)/2}\int_\R\beta_j(\lambda)\left({\e^{\i\lambda(t+ r)}\over \bc(-\lambda)}a_2^0(\lambda,r)+{\e^{\i\lambda(t-r)}\over \bc(\lambda)}a_2^0(-\lambda,r)\right)a_2^\alpha(\lambda,t)\,\d \lambda.
$$
This, together with Lemma \ref{lem:2.3}(i) and  integration by parts,   implies
\begin{eqnarray*}
\|\F(\widetilde{K}^\alpha_{t,j})\|_\infty
\leq C_{\alpha,N}2^{-Nj}(1+t)\,\e^{-(n-1)t/2}
\end{eqnarray*}
provided $N$ sufficiently large. By   Plancherel's theorem again, we see that
\begin{equation}\label{e5.33}
\|f*\widetilde{K}^\alpha_{t,j}\|_2\leq\|\F(\widetilde{K}^\alpha_{t,j})\|_\infty\|f\|_2\leq  C_{\alpha,N}2^{-Nj}(1+t)\,\e^{-(n-1)t/2}    \|f\|_2.
\end{equation}
From these estimates \eqref{e5.3} and \eqref{e5.33},      the desired estimate  \eqref{ekk} follows readily. The proof of   \eqref{ekkkk} follows from a similar argument showing \eqref{ekk}. 
Hence the proof of Lemma~\ref{lem:5.5} is complete.
\end{proof}

The following lemma is the  essential step in proving Proposition~\ref{prop:5.2}
as a special case  $p=  p_n.$


\begin{lem}\label{lem:5.6}
Let $p_n$ be as in \eqref{e1.15}.
For $T\geq 10$, $j\geq 0$ and sufficiently small $\delta>0$,
\begin{eqnarray}\label{e5.4}
\left\|\fm^\alpha_{T,j}(f)\right\|_{p_n}\leq C_{\alpha,\delta} 2^{-(\Re \alpha+(n-1)/p_n-\delta)j}\e^{(n-1)(1-1/p_n)T}\|f\|_{p_n}.
\end{eqnarray}
\end{lem}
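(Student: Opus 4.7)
The proof begins with the decomposition $\fm^\alpha_{T,j}(f)\le \mathfrak{A}^\alpha_{T,j}(f)+\fe^\alpha_{T,j}(f)$ already set up above. The error term $\fe^\alpha_{T,j}$ is handled by Lemma~\ref{lem:5.3} with the choice $\sigma=(n-1)/p_n-\delta$, which is admissible since $p_n>2$ forces $\sigma<(n-1)/2$; the resulting bound in fact decays like $\e^{-(n-1)T/2}$ and is comfortably inside the right-hand side of~\eqref{e5.4}. Thus only the main term $\mathfrak{A}^\alpha_{T,j}$, whose kernel is given by \eqref{e4.200}, requires serious attention.

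Next I would split $\CK^\alpha_{t,j}$ into an ``outgoing'' piece carrying the phase $\e^{\i\lambda(t+r)}$ and an ``incoming'' piece carrying $\e^{\i\lambda(t-r)}$. For the outgoing piece, $(\phi_T-\phi_0)(r)$ is supported in $r\in[1,T+2]$ while $t\in[T,T+1]$, so $\partial_\lambda[\lambda(t+r)]=t+r\ge T+1$. Repeated integration by parts in $\lambda$ against the amplitude $\beta_j(\lambda)\,\bc^\alpha(\lambda)\bc(-\lambda)^{-1}a_2^0(\lambda,r)a_2^\alpha(\lambda,t)$, which by Lemmas~\ref{lem:2.0} and~\ref{lem:2.1} is a symbol of order $-\Re\alpha$ in $\lambda$ concentrated at $|\lambda|\sim 2^j$, produces arbitrarily fast decay in both $2^j$ and $T$, so the outgoing piece contributes far less than \eqref{e5.4} allows.

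The heart of the argument is the incoming piece, for which I would follow the strategy of Ionescu~\cite{Io} and move to the Iwasawa $NA$-coordinates. Identify $\H^n$ with the upper half-space $(u,y)\in\R^{n-1}\times(0,\infty)$, set $y=\e^{-s}$ so that $\d z=\e^{(n-1)s}\,\d u\,\d s$ and
\[
\cosh d(z,w)=\cosh(s-s')+\tfrac12\,\e^{s+s'}\,|u-u'|^2.
\]
Apply the isometry $f(z)=\e^{(n-1)s/p_n}g(u,s)$, so $\|f\|_{L^{p_n}(\H^n)}=\|g\|_{L^{p_n}(\R^n)}$, together with a unit-scale rescaling in $u$ designed to normalize the factor $\e^{s+s'}$. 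In these variables the incoming convolution becomes a Fourier integral operator on $\R^n$ of the form
\[
F_t g(x)=\psi_0(x)\int \e^{\i\lambda(t-\Phi(x,y))}\,a(\lambda,t,x,y)\,\psi_1(y)\,g(y)\,\d\lambda\,\d y,
\]
with phase $\Phi$ read off from the explicit formula for $d$ above. One verifies the Monge--Amp\`ere condition~\eqref{e3.1} and the cinematic-curvature condition~\eqref{e3.2} by direct computation; the amplitude $a$ is of symbol order $m=-\Re\alpha$ at $|\lambda|\sim 2^j$ by Lemmas~\ref{lem:2.0}--\ref{lem:2.1}. Proposition~\ref{prop:3.3} then yields
\[
\Bigl\|\sup_{t\in[T,T+1]}|F_tg|\Bigr\|_{p_n}\le C_{\alpha,\delta}\,2^{-(\Re\alpha+(n-1)/p_n-\delta)j}\|g\|_{p_n},
\]
and reversing the coordinate change, the kernel prefactor $\e^{-(n-1)(r+t)/2}$, the Iwasawa Jacobian $\e^{(n-1)s}$, and the $L^{p_n}$-conjugation weight $\e^{(n-1)s/p_n}$ combine to produce precisely the factor $\e^{(n-1)(1-1/p_n)T}$ appearing in~\eqref{e5.4}.

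The main obstacle is the explicit geometric verification of \eqref{e3.1}--\eqref{e3.2} for the phase $\Phi$ arising from the hyperbolic distance in Iwasawa coordinates, valid uniformly across the large range of $(r,t)$ occurring here, together with the careful bookkeeping of the exponential-in-$T$ factors. A dyadic decomposition of the support in the geodesic-distance variable $r$, followed by rescaling and patching, is likely necessary, with any polynomial-in-$T$ losses absorbed into the $\delta$-slack by adjusting $\delta$ slightly.
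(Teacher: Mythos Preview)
Your reduction to $\mathfrak{A}^\alpha_{T,j}$ via Lemma~\ref{lem:5.3} is correct, and your treatment of the outgoing piece $\e^{\i\lambda(t+r)}$ by integration by parts is fine and matches the paper. The genuine gap is in the incoming piece: you propose to pass globally to Iwasawa coordinates, conjugate by the weight $\e^{(n-1)s/p_n}$, and apply Proposition~\ref{prop:3.3} directly. But Proposition~\ref{prop:3.3} (indeed any local-smoothing estimate for Fourier integral operators of this type) requires compactly supported localizing functions $\psi_0,\psi_1$ in the kernel~\eqref{e3.3}. Without an a priori spatial localization there is no way to invoke it, and the curvature conditions \eqref{e3.1}--\eqref{e3.2} are not uniform over a noncompact range of $(z,w)$.

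More seriously, your accounting of the exponential factors is wrong. In the paper's argument one first fixes a partition of unity $\{\eta_k\}$ into balls of a small fixed radius $c_0$, proves the \emph{localized} estimate
\[
\|\eta_k\,\mathfrak{A}^\alpha_{T,j}(\eta_l f)\|_{p_n}\le C_{\alpha,\delta}\,2^{-\varepsilon j}\,\e^{-(n-1)T/p_n}\|\indicator_{\supp\eta_l}f\|_{p_n}
\]
(note the \emph{decay} in $T$), and only then sums over $k,l$. The loss $\e^{(n-1)(1-1/p_n)T}$ arises entirely from this summation: for each $k$ there are $\#L_k\lesssim \e^{(n-1)T}$ indices $l$ with $d(\supp\eta_k,\supp\eta_l)\le T+2$, and H\"older on the inner sum together with the symmetric counting on the outer sum produces the factor $(\#L_k)^{p_n/p_n'}\cdot\#L_k\sim \e^{(n-1)p_nT}$, which combined with $\e^{-(n-1)T}$ from the localized bound gives exactly $\e^{(n-1)(p_n-1)T}$ inside the $p_n$-th power. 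The kernel prefactor $\e^{-(n-1)(r+t)/2}$ and the Iwasawa Jacobian by themselves do not produce the stated growth; on the contrary, they conspire to give the favorable $\e^{-(n-1)T/p_n}$ in the localized estimate. Your ``dyadic decomposition in $r$ followed by rescaling and patching'' is not the right mechanism here---the issue is spatial localization in $z$ and $w$, driven by the exponential volume growth of $\H^n$, not a decomposition of the radial variable.

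Finally, you flag the verification of \eqref{e3.1}--\eqref{e3.2} as the main obstacle, but once one has localized to $\supp\psi_0\subset B_{c_0}(0,0)$ and $\supp\psi_1\subset B_{c_0}(0,-T_0)$ in Iwasawa coordinates, the distance expands as $T_0+u-u'+(v-v')^2A$ with $A$ uniformly smooth and nondegenerate, and both conditions follow by a direct computation (the Monge--Amp\`ere matrix is a $c_0$-perturbation of a fixed nonsingular matrix, and the cinematic condition is just $|\nabla d(\cdot,w)|_g\equiv 1$). This is not where the difficulty lies.
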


\begin{proof}
By Lemma~\ref{lem:5.3} with $\sigma=(n-1)/p_n-\delta$, it suffices to show that
\begin{eqnarray}\label{e5.44}
\|{\mathfrak A}^\alpha_{T,j}(f)\|_{p_n}\leq C_{\alpha,\delta} 2^{-\varepsilon j}\e^{(n-1)(1-1/p_n)T}\|f\|_{p_n}
\end{eqnarray}
with  $\varepsilon=\Re\alpha+(n-1)/p_n-\delta$.

We now prove \eqref{e5.44}.
Roughly speaking, the favorable factor $2^{-\varepsilon j}$ in \eqref{e5.44} comes from the local smoothing estimate \eqref{e3.666}, while the unfavorable factor $\e^{(n-1)(1-1/p_n)T}$ is due to several localizations we have to make and to quantitative estimates on the   rotational curvature (the term on the RHS of \eqref{e3.1}) of defining functions of circles of radius $\approx T$.
We start by localizing the operator ${\mathfrak A}^\alpha_{T,j}$. Note that it suffices to prove that for any $\psi_0,\psi_1\in C_c^\infty(\H^n)$ with supports inside balls of radius $c_0$ ($c_0$ is sufficiently small to be determined later), one has
\begin{eqnarray}\label{e5.6}
\|\psi_0\,{\mathfrak A}^\alpha_{T,j}(\psi_1f)\|_{p_n}\leq C_{\alpha,\delta} 2^{-\varepsilon j}\e^{-(n-1)T/p_n}\|\indicator_{\supp \psi_1}f\|_{p_n}.
\end{eqnarray}

To see that \eqref{e5.6} implies the desired estimate \eqref{e5.44} we fix a partition of unity $\{\eta_k\}_{k\in K}$ on $\H^n$ with each $\supp \eta_k$ contained in a ball $B_k$ of radius $c_0$ satisfying that $(1/5)B_k$ are mutually disjoint. By $cB_k$ we mean the ball with the same center as $B_k$'s and radius $c$ times as $B_k$'s. Hence it's obvious that
\begin{eqnarray}\label{e5.7}
\left\|\sum_{k\in K}\indicator_{2B_k}\right\|_\infty<\infty.
\end{eqnarray}
For any $k\in K$ we set $L_k=\{l\in K:d(\supp\eta_k,\supp\eta_l)\leq T+2\}$. Note that
\begin{eqnarray*}
\sup_{k\in K}\# L_k
\leq C{\e^{(n-1)T}\over c_0^n}.
\end{eqnarray*}
Recall also that the Schwartz kernels $\CK^\alpha_{t,j}(d(z,w))$ vanish unless $d(z,w)\leq T+2$. Therefore
\begin{eqnarray*}
\|{\mathfrak A}^\alpha_{T,j}(f)\|^{p_n}_{p_n}&=&\int_{\H^n}\left|\sum_{k\in K}\eta_k(z)\,{\mathfrak A}^\alpha_{T,j}\!\left(\sum_{l\in L_k}\eta_lf\right)\!(z)\right|^{p_n}\d z\\
&\leq&C\,(\# L_k)^{p_n/p_n'}\sum_{k\in K}\sum_{l\in L_k}\|\eta_k{\mathfrak A}^\alpha_{T,j}(\eta_lf)\|_{p_n}^{p_n}\\
&\leq& C_{c_0,\alpha,\delta}2^{-\varepsilon p_nj}\e^{-(n-1)T}\e^{(n-1)p_nT}\|f\|_{p_n}^{p_n},
\end{eqnarray*}
which proves \eqref{e5.44}. Here we use \eqref{e5.7} in the second and the last inequalities, and the penultimate inequality follows from \eqref{e5.6}. Hence to prove \eqref{e5.44} it suffices to show \eqref{e5.6}.

Let us now prove    \eqref{e5.6}.  To do this, we define another cutoff function $\widetilde{\phi}_s\in C_c^\infty(\R)$ by $\widetilde{\phi}_s\equiv \phi_s$ on $[s,\infty)$ and $\widetilde{\phi}_s\equiv 0$ on $(-\infty,s-1]$. Note that in view of \eqref{e4.200}, we can write
\begin{equation}\label{e5.8}
\CK^\alpha_{t,j}(r)=\widetilde{\phi}_T(r)\,\e^{-(n-1)(r+t)/2}\int_\R\beta_j(\lambda)\,\e^{\i\lambda(t-r)}{\bc^\alpha(\lambda)\over \bc(\lambda)}a_2^\alpha(\lambda,t)\,a_2^0(-\lambda,r)\,\d \lambda+ \CR^\alpha_{t,j}(r),\end{equation}
where
\begin{eqnarray*}
\CR^\alpha_{t,j}(r)&=&\e^{-(n-1)(r+t)/2}\left((\phi_T(r)-\widetilde{\phi}_T(r)-\phi_0(r))\int_\R\beta_j(\lambda)\,\e^{\i\lambda(t-r)}{\bc^\alpha(\lambda)\over \bc(\lambda)}a_2^\alpha(\lambda,t)\,a_2^0(-\lambda,r)\,\d \lambda\right.\\
&&+\left.(\phi_T(r)-\phi_0(r))\int_\R\beta_j(\lambda)\,\e^{\i\lambda(t+r)}{\bc^\alpha(\lambda)\over \bc(-\lambda)}a_2^\alpha(\lambda,t)\,a_2^0(\lambda,r)\,\d \lambda\right).
\end{eqnarray*}
Lemmas \ref{lem:2.0}, \ref{lem:2.1} and \ref{lem:2.3}(iii) and integration by parts give that
\begin{eqnarray*}
\sup_{t\in[T,T+1]}|\CR^\alpha_{t,j}(r)|\leq C_{\alpha,N}\e^{-(n-1)(r+t)/2}2^{-Nj}\phi_T(r)\,(1+|t-r|)^{-N}.
\end{eqnarray*}
Hence by H\"{o}lder's inequality we have
\begin{eqnarray*}
\left\|\sup_{t\in[T,T+1]}|(\psi_0(\psi_1f)*\CR^\alpha_{t,j})|\right\|_{p_n}
\leq C_{c_0,\alpha,N}2^{-Nj}\e^{-(n-1)t/2}\|\indicator_{\supp \psi_1}f\|_{p_n},
\end{eqnarray*}
which indicates that, in view of \eqref{e5.6}, we are able to replace the convolution kernel $\CK^\alpha_{t,j}(r)$ by another kernel
\begin{equation}\label{e5.10}
\widetilde{\CK}^\alpha_{t,j}(z,w)=\psi_0(z)\,\psi_1(w)\,\e^{-(n-1)(d(z,w)+t)/2}\int_\R\beta_j(\lambda)\,\e^{\i\lambda(t-d(z,w))}b^\alpha(\lambda,t,d(z,w))\,\d \lambda
\end{equation}
with
\begin{eqnarray}\label{e4.25.1}
b^\alpha(\lambda,t,r)=\widetilde{\phi}_T(r){\bc^\alpha(\lambda)\over \bc(\lambda)}a_2^\alpha(\lambda,t)\,a_2^0(-\lambda,r)\in S^{-\Re\alpha}_\lambda
\end{eqnarray}
by Lemmas \ref{lem:2.1} and \ref{lem:2.3}(iii). To show \eqref{e5.6}, it remains to show that
\begin{equation}\label{e5.9}
\left\|\sup_{t\in[T,T+1]}\left|\int_{\H^n}f(w)\,\widetilde{\CK}^\alpha_{t,j}(\cdot,w)\,\d w\right|\right\|_{p_n}\leq C_{\alpha,\delta}2^{-\varepsilon j}\e^{-(n-1)T/p_n}\|\indicator_{\supp \psi_1}f\|_{p_n}.
\end{equation}

The estimate \eqref{e5.9} follows from the local smoothing estimate \eqref{e3.666} in Proposition \ref{prop:3.3}. To see this, we shall adopt the Iwasawa coordinates on $\H^n$ (see \cite[p. 294]{Io}). Recall that $\H^n\cong\R^{n-1}\rtimes \R$ with multiplication law given by
\begin{eqnarray*}
(v,u)\cdot(v',u')=(v+\e^u v',u+u').
\end{eqnarray*}
Also under these coordinates the geodesic distance between $(v,u)$ and $(v',u')$ is given by
\begin{equation}\label{e5.11}
d((v,u),(v',u'))=\arcosh (\e^{-u-u'}|v-v'|^2+\cosh (u-u')).
\end{equation}
Note that $d$ is invariant under left translations.

Without losing any generality, we assume the cutoff functions $\psi_0,\psi_1$ in \eqref{e5.10} satisfy $\supp\psi_0\subset B_{c_0}(0,0)$ and $\supp\psi_1\subset B_{c_0}(0,-T_0)$, where $T_0\in[T-1-2c_0,T+2+2c_0]$. Let $(v,u)\in\supp\psi_0$ and $(0,-T_0)\cdot(e^{T_0}v',u')=(v',u'-T_0)\in\supp\psi_1$. In view of \eqref{e5.11} this implies $|v|,|u|,|u'|\leq Cc_0$ and $|v'|\leq Cc_0\,\e^{-T_0}$ with $C$ independent of $T_0$. Hence by \eqref{e5.11} and the Taylor formula we can write
\begin{equation}\label{e5.12}
d((v,u),(v',u'-T_0))=T_0+u-u'+(v-v')^2 A(v,u,v',u';T_0),
\end{equation}
where $A(\cdot,\cdot,\cdot,\cdot;T_0)$ is a smooth function satisfying $A(0,0,0,0;T_0)\sim 1$ uniformly in $T_0>5$, and that all the derivatives of $A(\cdot,\cdot,\cdot,\cdot;T_0)$ at $(0,0,0,0)$ are uniformly bounded in $T_0>5$.

In view of \eqref{e5.10}, we set the function $\Phi((v,u),(v',u'))$ in \eqref{e3.3} to be $d((v,u),(v',u'-T_0))$ and the symbol $a(\lambda,t,(v,u),(v',u'))$ in \eqref{e3.3} to be $b^\alpha(\lambda,t,d((v,u),(v',u'-T_0)))$. Notice that the problem in $[(v,u),(v',u')]$-coordinates is no longer degenerate. Indeed, on the one hand, by \eqref{e5.12} the Monge-Ampere matrix of $\Phi$ turns into
\begin{eqnarray*}
\left(\begin{array}{ccccc}
0&\varepsilon_{1,2}&\dots&\varepsilon_{1,n-1}&1+\varepsilon_{1,n}\\
\varepsilon_{2,1}&-2a+\varepsilon_{2,2}&\dots&\varepsilon_{2,n-1}&\varepsilon_{2,n}\\
\vdots&\vdots&\ddots&\vdots&\vdots\\
\varepsilon_{n-1,1}&\varepsilon_{n-1,2}&\dots&-2a+\varepsilon_{n-1,n-1}&\varepsilon_{n-1,n}\\
-1+\varepsilon_{n,1}&\varepsilon_{n,2}&\dots&\varepsilon_{n,n-1}&\varepsilon_{n,n}
\end{array}\right),
\end{eqnarray*}
where $a=A(0,0,0,0;T_0)$ and each $|\varepsilon_{i,j}|\leq Cc_0$ uniformly in $T_0>5$. Hence \eqref{e3.1} is fulfilled provided $c_0$ is sufficiently small but independent of $T_0$. On the other hand, if we let $q$ be the homogeneous function in \eqref{e3.2} satisfying $q^2((v,u),\xi)=\e^{2u}(\xi_1^2+\dots+\xi_{n-1}^2)+\xi_n^2$, which is the cometric on $\H^n$, then for $u,v,u',v'$ as above one has
\begin{eqnarray*}
q\!\left((v,u),{\partial d((v,u),(v',u'-T_0))\over\partial (v,u)}\right)=|(\nabla d(\cdot,(v',u'-T_0)))(v,u)|_g\equiv 1,
\end{eqnarray*}
which is \eqref{e3.2}. Here $\nabla$ is the gradient and $|\cdot|_g$ is the norm both induced by the Riemannian metric on $\H^n$. Also, in view of \eqref{e4.25.1} and \eqref{e5.12}, $b^\alpha(\lambda,t,d((v,u),(v',u'-T_0)))$ satisfies \eqref{e4.2.1} with $m$ replaced by $-\Re\alpha$. Hence by \eqref{e3.666} in Proposition \ref{prop:3.3}, we can write
\begin{eqnarray*}
&&\int_{\R^n}\sup_{t\in[T,T+1]}\left|\int_{\R^n}f(v',u'-T_0)\,\widetilde{\CK}^\alpha_{t,j}((v,u),(v',u'-T_0))\,\d v'\d u'\right|^{p_n}\d v\d u\\
&&\qquad\leq C_{\alpha,\delta}2^{-\varepsilon p_nj}\e^{-(n-1)p_nT}\int_{\supp \psi_1}|f(v',u'-T_0)|^{p_n}\,\d v'\d u',
\end{eqnarray*}
where the term $\e^{-(n-1)p_nT}$ comes from the exponential decay in \eqref{e5.10}. Note that under the Iwasawa coordinates, the measure induced by the Riemannian metric reads $\e^{-(n-1)u}\,\d v\d u$. So
\begin{eqnarray*}
\left\|\sup_{t\in[T,T+1]}\left|\int_{\H^n}f(w)\,\widetilde{\CK}^\alpha_{t,j}(\cdot,w)\,\d w\right|\right\|_{p_n}\leq C_{\alpha,\delta}2^{-\varepsilon j}\e^{-(n-1)T/p_n}\|\indicator_{\supp \psi_1}f\|_{p_n},
\end{eqnarray*}
which is \eqref{e5.9}.
\end{proof}

Now we apply Lemma~\ref{lem:5.4}, Lemma~\ref{lem:5.5} and Lemma~\ref{lem:5.6}  to prove    Proposition~\ref{prop:5.2}.

\begin{proof}[Proof of Proposition~\ref{prop:5.2}]
Let $\delta$ be as in Lemma \ref{lem:5.6}. By Lemmas \ref{lem:5.4} and \ref{lem:5.5} and complex interpolation, we have
\begin{eqnarray}
\|\fm^\alpha_{T,j}(f)\|_{p_n}\leq C_\alpha2^{-(\Re \alpha+(n-2)/p_n-\delta)j}\e^{-(n-1)T/p_n}(1+T)^{2/p_n}\|f\|_{p_n}\label{e5.5}.
\end{eqnarray}

In order to sum over $j$, we choose  $j_0$ with $2^{j_0}=\e^{(n-1)p_n}(1+T)^{-2}$   such that the two norms in \eqref{e5.4} and \eqref{e5.5} are  equal. The result is
\begin{eqnarray*}
\|\fm^\alpha_T(f)\|_{p_n}
&\leq&\sum_{j=0}^{j_0}\|\fm^\alpha_{T,j}(f)\|_{p_n} +\sum_{j= j_0}^{\infty} \|\fm^\alpha_{T,j}(f)\|_{p_n}\\
&\leq &C_{\alpha,\delta}\e^{[-(\Re\alpha+(n-1)/p_n-\delta)(n-1)p_n+(n-1)(1-1/p_n)]T}(1+T)^{-2(\Re\alpha+(n-1)/p_n-\delta)}\|f\|_{p_n},
\end{eqnarray*}
which proves   Proposition \ref{prop:5.2}.
\end{proof}

Next we turn to handle the ``local'' part of the operator $\fm^\alpha$ by proving the following result.

\begin{prop}\label{prop:5.7} Let $p_n$ be as in \eqref{e1.15}.
If $\Re\alpha>(1-n)/p_n$, then
\begin{eqnarray}\label{e5.122}
\left\|\sup_{0<t\leq 10}|M^\alpha_t(f)|\right\|_{p_n}\leq C_\alpha\|f\|_{p_n}.
\end{eqnarray}
\end{prop}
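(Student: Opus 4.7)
My plan is to adapt the strategy of Proposition~\ref{prop:5.2} to the local regime $0 < t \leq 10$, splitting the time range into $[(\log 2)/2, 10]$, where Lemma~\ref{lem:2.3}(iii) applies, and $(0, (\log 2)/2)$, where Lemma~\ref{lem:2.3}(ii) is required. In both pieces I perform the Littlewood--Paley decomposition $M^\alpha_t = \sum_{j \geq 0} M^\alpha_{t,j}$ introduced before~\eqref{ekkk}; the $j=0$ piece is directly $L^p$-bounded via Lemma~\ref{lem:2.3}(i) and Schur's lemma, so I focus on $j \geq 1$.

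On $[(\log 2)/2, 10]$, the argument of Lemmas~\ref{lem:5.4}--\ref{lem:5.6} goes through verbatim with $T$ in a bounded interval instead of $T \geq 10$, since all exponential factors $e^{cT}$ collapse to absolute constants. The outcome is
\[
\Big\|\sup_{t \in [(\log 2)/2, 10]} |M^\alpha_{t,j}(f)|\Big\|_{p_n} \leq C_{\alpha, \delta}\, 2^{-(\Re\alpha + (n-1)/p_n - \delta) j}\, \|f\|_{p_n},
\]
and the sum over $j \geq 1$ converges precisely when $\Re\alpha > (1-n)/p_n$, which is the sharp constraint of the proposition.

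For $t \in (0, (\log 2)/2)$ I would further partition the time interval into dyadic pieces $[T_k, 2T_k]$ with $T_k = 2^{-k}$, $k \geq 1$, and split each $(j,k)$ pair into the oscillatory regime $2^j T_k \geq 1$ and the non-oscillatory regime $2^j T_k < 1$. In the oscillatory regime Lemma~\ref{lem:2.3}(ii) writes $\beta_j(\lambda) m^\alpha_t(\lambda)$ as two FIO phases $\beta_j(\lambda)\, e^{\pm i\lambda t} a^{\alpha, N}_1(\pm\lambda, t)$ plus a remainder $E^{\alpha,N}$ made negligible by large $N$. Passing to Iwasawa coordinates as in the proof of Proposition~\ref{prop:5.2}, the corresponding kernel takes the Fourier integral form~\eqref{e3.3} with phase $\Phi(x,y) = d(x,y)$ satisfying~\eqref{e3.1}--\eqref{e3.2}, and symbol in $S^{-\Re\alpha - (n-1)/2}_\lambda$ with seminorm $\lesssim T_k^{-\Re\alpha - (n-1)/2}$ when localized to $|\lambda| \sim 2^j$. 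After rescaling $s = t/T_k \in [1,2]$ to normalize the time interval, Proposition~\ref{prop:3.3} delivers
\[
\Big\|\sup_{t \in [T_k, 2T_k]} |M^\alpha_{t,j}(f)|\Big\|_{p_n} \leq C_{\alpha, \delta}\, T_k^{-\Re\alpha - (n-1)/2}\, 2^{-(\Re\alpha + (n-1)/2 + (n-1)/p_n - \delta) j}\, \|f\|_{p_n},
\]
and summing over $1 \leq k \leq j$ gives $\lesssim 2^{-((n-1)/p_n - \delta) j}$, summable in $j$ for any admissible $\alpha$. The non-oscillatory regime $k > j$ is easier: the multiplier $\beta_j(\lambda) m^\alpha_t(\lambda)$ has size $\lesssim (2^j T_k)^{\Re\alpha + (n-1)/2}$ by the Taylor expansion underlying Lemma~\ref{lem:2.3}(i), and interpolating the resulting $L^2$ bound (via Plancherel) with a trivial $L^\infty$ kernel bound yields an $L^{p_n}$ contribution summable in both $j$ and $k$.

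The principal technical obstacle is verifying that after Iwasawa coordinates and the rescaling $t = T_k s$, the rotational curvature~\eqref{e3.1} and cinematic curvature~\eqref{e3.2} hypotheses hold uniformly in $k$ for the FIO extracted from the small-$t$ kernel. This reduces to stability of those two conditions under the small-scale rescaling $T_k^{-1} d(T_k \cdot, T_k \cdot)$ of the hyperbolic distance, which follows from the Monge--Amp\`ere computation after~\eqref{e5.12} with error terms $\varepsilon_{i,j}$ taken uniform in $k$.
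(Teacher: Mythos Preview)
Your overall strategy---split the time range, use Lemma~\ref{lem:2.3}(iii) for $t$ bounded away from $0$, and a rescaling argument for small $t$---is correct and matches the paper's, but the implementation differs in both pieces. For $t$ bounded away from zero (the paper takes $t\in[1,10]$), the paper does \emph{not} rerun the Iwasawa machinery of Lemma~\ref{lem:5.6}; it simply writes the kernel as a Fourier integral with phase $\Phi=d(z,w)$ in ordinary local coordinates, cites the remark after \cite[Corollary~2.2]{So2} for the curvature hypotheses~\eqref{e3.1}--\eqref{e3.2}, and applies Proposition~\ref{prop:3.3} directly. Your claim that Lemmas~\ref{lem:5.4}--\ref{lem:5.6} go through verbatim gives the right conclusion, but the Iwasawa setup of Lemma~\ref{lem:5.6} (translation by $\fa(T_0)$ and the expansion~\eqref{e5.12}) was built for $T_0>5$ and is superfluous when $t$ is merely bounded.

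For $0<t\leq 1$ the paper works in \emph{hyperboloid} coordinates $\tau(x)=(\sqrt{1+|x|^2},x)$ rather than Iwasawa, and rescales via $\widetilde d^{\,t}(x,y)=t^{-1}d(t\tau(x),t\tau(y))\to|x-y|$ as $t\to0$. After reducing to the standard FIO form via \cite[Proposition~6.2.4]{So1}, the phase becomes $x\cdot\xi\pm|\xi|+O(t|\xi|)$, and a single cut at $t=2^{-j_0}$ followed by the rescaling argument of \cite[Corollary~6.3.3]{So1} absorbs what you do by an explicit dyadic sum over $T_k=2^{-k}$. Your double $(j,k)$ bookkeeping would also work, but the sentence ``this follows from the Monge--Amp\`ere computation after~\eqref{e5.12}'' is a genuine misstep: that computation is for points separated by $T_0>5$ in Iwasawa coordinates and says nothing about the small-scale limit. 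What you actually need---that $T_k^{-1}d(T_k\cdot,T_k\cdot)$ satisfies~\eqref{e3.1}--\eqref{e3.2} uniformly in $k$---is a separate (easy) verification, the point being that the limit is Euclidean distance, for which both curvature conditions are classical. The paper's hyperboloid-coordinate route makes this limit transparent and avoids having to redo that check.
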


\begin{proof}
Consider a partition of unity $\{ \varkappa_k\}_{k\in\widetilde{K}}$ on $\H^n$ with each $\supp \varkappa_k$ contained in a ball $B_k$ of radius $10$, where the balls $B_k$ satisfy that $(1/5)B_k$ are mutually disjoint. 
Let $\varsigma_k\in C_c^\infty(\H^n)$ such that $\varsigma_k\equiv 1$ on $2B_k$ and $\varsigma_k\equiv 0$ outside $3B_k$. By \eqref{e1.8} we have that  $M^\alpha_t( \varkappa_kf)=\varsigma_kM^\alpha_t( \varkappa_kf)$ for  $0<t\leq 10$.

To prove Proposition~\ref{prop:5.7},   it is enough to show that for all $\alpha\in\CC$ with $\Re\alpha>(1-n)/p_n$ there holds
\begin{eqnarray}\label{e5.15}
\left\|\sup_{t\in[1,10]}|\psi_0\,M^\alpha_t( \psi_1f)|\right\|_{p_n}
+ \left\|\sup_{0<t\leq 1}|\psi_0\,M^\alpha_t( \psi_1f)|\right\|_{p_n} \leq C_\alpha\|\indicator_{\supp\psi_1}f\|_{p_n}
\end{eqnarray}
where $\psi_0,\psi_1\in C^\infty_c(\H^n)$ have sufficiently small supports.

\medskip

Let us prove \eqref{e5.15} for the case $1\leq t\leq 10$. Since the convolution kernel $K^\alpha_t$ of $M^\alpha_t$ satisfies $\supp K^\alpha_t(r)\subset \{r:0\leq r\leq t\}$ (see \eqref{e1.7}), we  write
\begin{eqnarray*}
K^\alpha_t(r) =\phi_t(r)\int_0^\infty m^\alpha_t(\lambda)\,\varphi_\lambda(r)\,|\bc(\lambda)|^{-2}\,\d\lambda=:\mathscr{B}^\alpha_t(r)+\mathscr{D}^\alpha_t(r),
\end{eqnarray*}
where by Lemma \ref{lem:2.0} and the asymptotics of $m^\alpha_t$ and $\varphi_\lambda$ (see Lemma \ref{lem:2.3}(iii) and Remark \ref{rk:2.5})
\begin{eqnarray}\label{e5.160}
\mathscr{B}^\alpha_t(r)=(\phi_t(r)-\phi_0(r))\,\e^{-(n-1)(r+t)/2}\int_\R\e^{\i\lambda(t-r)}{\bc^\alpha(\lambda)\over\bc(\lambda)}a^\alpha_2(\lambda,t)\,a^0_2(-\lambda,r)\,\d\lambda,
\end{eqnarray}
$\mathscr{D}^\alpha_t(r)=K^\alpha_t(r)-\mathscr{B}^\alpha_t(r)$ and $\phi_s$ is the cutoff function as above.

Observe  that in terms of \eqref{e5.15}, $\mathscr{D}^\alpha_t$
is an error term. To see this, by Lemmas \ref{lem:2.0}, \ref{lem:2.1} and \ref{lem:2.3} 
together with 
an argument in the proof of Lemma \ref{lem:5.3},  we have  
\begin{eqnarray}\label{e5.162}
|\mathscr{D}^\alpha_t(r)|\leq C_\alpha(\phi_{10}(r)+\phi_0(r)\,r^{\Re\alpha-(n+1)/2}),
\end{eqnarray}
which implies  
\begin{eqnarray*}
\sup_{z\in\H^n}\left\|\sup_{t\in[1,10]}|\mathscr{D}^\alpha_t(d(z,\cdot))|\right\|_1+\sup_{w\in\H^n}\left\|\sup_{t\in[1,10]}|\mathscr{D}^\alpha_t(d(\cdot,w))|\right\|_1\leq C_\alpha.
\end{eqnarray*}
By Schur's lemma,   we obtain 
\begin{eqnarray}\label{e5.161}
\left\|\sup_{t\in[1,10]}|\psi_0\,((\psi_1f)* \mathscr{D}^\alpha_t)|\right\|_{p_n}\leq C_\alpha\|f\|_{p_n}.
\end{eqnarray}

Let us turn to the term $\mathscr{B}^\alpha_t$.  In view of 
the remark following \cite[Corollary 2.2]{So2}, under local coordinates, for $1\leq t\leq 10$, if we set the function $\Phi$ in \eqref{e3.3} to be $d(z,w)$ and the symbol $a$ in \eqref{e3.3} to be
$$
(\phi_t(d(z,w))-\phi_0(d(z,w))){\bc^\alpha(\lambda)\over\bc(\lambda)}a^\alpha_2(\lambda,t)\,a^0_2(-\lambda,d(z,w)),
$$
then the conditions on $\Phi$ and $a$ in Proposition \ref{prop:3.3} are fulfilled. Therefore we can invoke \eqref{e3.7} in Proposition \ref{prop:3.3} to obtain
\begin{eqnarray*}
\left\|\sup_{t\in[1,10]}|\psi_0\,((\psi_1f)*\mathscr{B}^\alpha_t)|\right\|_{p_n}\leq C_\alpha\|\indicator_{\supp \psi_1}f\|_{p_n}\ \ \ \ \mbox{for }\Re\alpha>{1-n\over p_n}.
\end{eqnarray*}
This, combined with \eqref{e5.161}, concludes the proof of \eqref{e5.15} for the case $1\leq t\leq 10$.

\medskip

Now we consider \eqref{e5.15} for the case $0<t\leq 1$.   Let $\gamma$ be the cutoff function in the proof of Lemma \ref{lem:5.3}. We define another cutoff function $\widetilde{\gamma}\in C_c^\infty(\R_+)$ with $\widetilde{\gamma}\equiv 1$ on $[1/2,\infty)$ and $\widetilde{\gamma}\equiv 0$ on $[0,1/4]$. Let $K^\alpha_t(r)$ be the convolution kernel of $M^\alpha_t$. Note that in view of \eqref{e1.8}, $\supp K^\alpha_t(r)\subset \{r:0\leq r\leq t\}$. So we can write
\begin{eqnarray*}
K^\alpha_t(r)=\gamma\!\left({r\over t}\right)\int_0^\infty m^\alpha_t(\lambda)\,\varphi_\lambda(r)\,|\bc(\lambda)|^{-2}\,\d \lambda=\mathscr{P}^\alpha_t(r)+\mathscr{W}^\alpha_t(r),
\end{eqnarray*}
where by the asymptotics of $m^\alpha_t$ and $\varphi_\lambda$ (see Lemma \ref{lem:2.3}(ii) and Remark \ref{rk:2.5})
\begin{eqnarray*}
\mathscr{P}^\alpha_t(r)=\widetilde{\gamma}\!\left({r\over t}\right)\int_\R\e^{\i\lambda(t-r)}a^{\alpha,N}_1(\lambda,t)\,a^{0,N}_1(-\lambda,r)\,(1-\gamma(\lambda r))\,|\bc(\lambda)|^{-2}\,\d\lambda
\end{eqnarray*}
and $\mathscr{W}^\alpha_t(r)=K^\alpha_t(r)-\mathscr{P}^\alpha_t(r)$.

Observe that in terms of \eqref{e5.15}, $\mathscr{W}^\alpha_t$ is an error term. To see this, by Lemmas \ref{lem:2.0}, \ref{lem:2.1} and \ref{lem:2.3}, an argument in  the proof of Lemma \ref{lem:5.3} shows that
\begin{eqnarray*}
|\mathscr{W}^\alpha_t(r)|\leq C_\alpha t^{-n}\indicator_{[0,2t]}(r)\left(1+\left({t\over r}\right)^{-\Re\alpha+(n+1)/2}\right),
\end{eqnarray*}
and so
\begin{eqnarray}\label{e5.166}
\left\|\sup_{0<t\leq 1}|\psi_0\,((\psi_1f)*\mathscr{W}^\alpha_t)|\right\|_{p_n}\leq C_\alpha \|f\|_{p_n}\ \ \ \mbox{for }\Re\alpha>{1-n\over p_n}.
\end{eqnarray}

Now we claim that for $ \Re\alpha>{(1-n)/p_n}$,
\begin{eqnarray}\label{e5.17}
\left\|\sup_{0<t\leq 1}|\psi_0\,((\psi_1f)*\mathscr{P}^\alpha_t)|\right\|_{p_n}\leq C_\alpha \|f\|_{p_n}.
\end{eqnarray}
To prove \eqref{e5.17}, we observe that 
\begin{eqnarray*}
\mathscr{P}^\alpha_t(r)=\widetilde{\widetilde{\gamma}}\!\left({r\over t}\right)t^{-n}\int_{\R}\e^{\i\lambda(1-r/t)}|\lambda|^{-\Re\alpha}\widetilde{a}^\alpha_1(\lambda,t)\,\widetilde{a}^0_1\!\left(\lambda {r\over t},r\right)\left(1-\gamma\!\left(\lambda {r\over t}\right)\right)\fc\!\left({\lambda\over t}\right)\d\lambda,
\end{eqnarray*}
where $\widetilde{\widetilde{\gamma}}(s)=s\,\widetilde{\gamma}(s)$,
$\fc(\lambda)=|\lambda|^{-(n-1)}|\bc(\lambda)|^{-2}$ and $\widetilde{a}^\alpha_1(\lambda,t)=|\lambda|^{-\Re\alpha-(n-1)/2}a^{\alpha,N}_1(\lambda/t, t)$.  By Lemma \ref{lem:2.1} and \eqref{e2.702}, we see that $\fc(\lambda),\widetilde{a}^\alpha_1(\lambda,t)\in S^0_\lambda$.
Without losing any generality, we assume the supports of $\psi_0$ and $\psi_1$ in \eqref{e5.17} are in some neighborhoods of $\0$. Here we adopt the hyperboloid coordinates. Let
\begin{eqnarray*}
\begin{array}{cccc}
\tau:&\R^n&\to&\H^n,\\
&x&\mapsto&(\sqrt{1+|x|^2},x).
\end{array}
\end{eqnarray*}
Under this coordinates, the geodesic distance reads
\begin{eqnarray*}
d(\tau(x),\tau(y))=\arcosh(\sqrt{1+|x|^2}\sqrt{1+|y|^2}-x\cdot y).
\end{eqnarray*}	
We set $\widetilde{d}^t(x,y)=t^{-1}d(t\,\tau(x),t\,\tau(y))$ and
\begin{eqnarray*}
\widetilde{\mathscr{P}}^\alpha_t(r)=t^{n}\mathscr{P}^\alpha_t(tr)=\widetilde{\widetilde{\gamma}}(r)\int_{\R}\e^{\i\lambda(1-r)}|\lambda|^{-\Re\alpha}\widetilde{a}^\alpha_1(\lambda,t)\,\widetilde{a}^0_1(\lambda r,tr)\,(1-\gamma(\lambda r))\,\fc\!\left({\lambda\over t}\right)\d\lambda.
\end{eqnarray*}
Then \eqref{e5.17} is equivalent to
\begin{equation}\label{e5.18}
\left\|\sup_{0<t\leq 1}\left|\widetilde{\psi}_0(\cdot)\int_{\R^n}\widetilde{\psi}_1(y)\,g(y)\,\widetilde{\mathscr{P}}^\alpha_t(\widetilde{d}^t(\cdot,y))\,\d y\right|\right\|_{L^{p_n}(\R^n)}\leq C_{\alpha}\|g\|_{L^{p_n}(\R^n)}
\end{equation}
for $ \Re\alpha>{(1-n)/p_n}.$

To prove \eqref{e5.18},  we set
\begin{eqnarray*}
b(\lambda,t,x,y)=\widetilde{\widetilde{\gamma}}(\widetilde{d}^t(x,y))\,(1-\gamma(\lambda \widetilde{d}^t(x,y)))\,|\lambda|^{-\Re\alpha}\widetilde{a}^\alpha_1(\lambda,t)\,\widetilde{a}^0_1(\lambda \widetilde{d}^t(x,y),t\widetilde{d}^t(x,y))\,\fc\!\left({\lambda\over t}\right).
\end{eqnarray*}
In view of \cite[Proposition 6.2.4]{So1}, modulo a $C^\infty$ error, for $0<t\leq1$, $\mathscr{P}^\alpha_t(\widetilde{d}^t(x,y))$ can be written as the sum of 
finite
terms, each of which is of the form
\begin{eqnarray}\label{e5.21}
\widetilde{\widetilde{\gamma}}(\widetilde{d}^t(x,y))\,\widetilde{\widetilde{\psi}}_0(x)\,\widetilde{\widetilde{\psi}}_1(y)\int_{\R^n}\e^{\i(\widetilde{\varphi}(x,t,\xi)-y\cdot\xi)}\widetilde{b}(x,t,\xi)\,\d\xi,
\end{eqnarray}
where   
\begin{eqnarray*}
|\partial^{\gamma_1}_x\partial^{\gamma_2}_\xi\widetilde{b}(x,t,\xi)|\leq C_{\alpha,\gamma_1,\gamma_2}(1+|\xi|)^{-\Re\alpha-(n-1)/2-|\gamma_2|}\qquad\mbox{for }0<t\leq 2^{-j_0},
\end{eqnarray*}
\begin{eqnarray*}
|\partial^{\gamma_1}_x\partial^k_t\partial^{\gamma_2}_\xi\widetilde{b}(x,t,\xi)|\leq C_{\alpha,k,\gamma_1,\gamma_2}(1+|\xi|)^{-\Re\alpha-(n-1)/2-|\gamma_2|}\qquad\mbox{for }t\in [2^{-j_0},1] 
\end{eqnarray*}
for an integer $j_0$ 
to be
chosen later, and  $\widetilde{\varphi}$ is homogeneous of degree $1$ on $\xi$ satisfying $\det({\partial^2\widetilde{\varphi}/\partial x\partial\xi})\ne 0$. If we set $T_t\ (0<t\leq 1)$ to be the operator given by integration against the Schwartz kernel \eqref{e5.21}, then by \cite[Proposition 3.2]{BeHiSo} and \cite[Theorem 1.4]{GaLiMiXi},
\begin{eqnarray*}
\|T_t(g)(x)\|_{L^{p_n}_{x,t}(\R^n\times[2^{-j_0},1])}\leq C_\alpha\|g\|_{W^{-\Re\alpha-(n-1)/p_n-s,p_n}(\R^n)}\qquad\mbox{for any }s<{1\over p_n}.
\end{eqnarray*}
We then use an argument in 
the proof of 
\cite[Theorem 6.3.1]{So1} to obtain
$
\|\sup_{t\in[2^{-j_0},1]}|T_t(g)|\|_{p_n}\leq C_\alpha \|g\|_{p_n}.
$
Hence,  the proof of  \eqref{e5.18} reduces to 
showing 
\begin{eqnarray}\label{e5.23}
\left\|\sup_{0<t\leq 2^{-j_0}}|T_t(g)|\right\|_{p_n}\leq C_\alpha \|g\|_{p_n}.
\end{eqnarray}
For this,  we need to consider $\mathscr{P}^\alpha_t(\widetilde{d}^t(x,y))$ when $t\to0$. Note that $\widetilde{d}^t(x,y)\to |x-y|$ as $t\to 0$ uniformly in $x,y$ belonging to a fixed compact set. We also have  that $\fc(\lambda)\to |C_n'|^{-2}$ as $\lambda\to\infty$, with $C_n'=2^{n-2}\Gamma(n/2)/\sqrt\uppi$ being the constant in \eqref{e2.1}.  It tells us  that $\widetilde{K}^\alpha_0(\widetilde{d}^0(x,y))$ is of the form
\begin{eqnarray*}
&&\widetilde{\widetilde{\gamma}}(|x-y|)\int_{\R}\e^{\i\lambda(1-|x-y|)}|\lambda|^{-\Re\alpha}\widetilde{a}^\alpha_1(\lambda,0)\,\widetilde{a}^0_1(\lambda |x-y|,0)\,(1-\gamma(\lambda |x-y|))\,\d\lambda.
\end{eqnarray*}
This, together with \cite[Proposition 6.2.4]{So1} and the asymptotics of the   Fourier transform of the spherical measure on $\R^n$ derived using \cite[Chapter VIII, (15) and (25)]{St1}, indicates that $\widetilde{\varphi}$ in \eqref{e5.21} satisfies $\widetilde{\varphi}(x,0,\xi)=x\cdot\xi\pm|\xi|$. Choosing $j_0$ sufficiently large and noting that $\widetilde{\varphi}(x,t,\cdot)$ is homogeneous of degree $1$, we use  Taylor's formula to derive that the phase function in \eqref{e5.21} satisfies
\begin{eqnarray*}
\widetilde{\varphi}(x,t,\xi)=x\cdot\xi\pm|\xi|+O(t|\xi|) \qquad\mbox{for }0<t\leq 2^{-j_0}.
\end{eqnarray*}
Then  we rescale the integral representation of $T_t$ into the form of the operator $\mathcal{F}_t$ on \cite[p. 192]{So1}. Using the argument showing \cite[Corollary 6.3.3]{So1}, together with \cite[Proposition 3.2]{BeHiSo} and \cite[Theorem 1.4]{GaLiMiXi}, we obtain \eqref{e5.23}, and the estimate \eqref{e5.18} is valid. This proves \eqref{e5.17} and finishes the proof of \eqref{e5.15} for the case $0<t\leq 1$. Hence, the proof of Proposition~\ref{prop:5.7} is concluded. 
\end{proof}

\begin{proof}[Proof of Theorem  \ref{thm:5.1}]
This is a direct consequence of \eqref{e5.1} and Proposition~\ref{prop:5.7}.
\end{proof}

We finally 
come to the
\begin{proof}[Proof of Theorem~\ref{thm:1.2}]
We shall use the interpolation argument in \cite{Ko}. In view of \cite[Theorem 3]{Ko}, $\fm^\alpha$ is bounded on $L^p(\H^n)$ for $\Re\alpha>0$ and $p=\infty$, or $\Re\alpha>(2-n)/2$ and $p=2$. Then we interpolate between these results and Theorem \ref{thm:5.1} to derive Theorem \ref{thm:1.2}.
\end{proof}

\bigskip

\noindent
{\bf Acknowledgements}: The authors    were supported  by National Key R$\&$D Program of China 2022YFA1005700. P. Chen was supported by NNSF of China 12171489. M. Shen was supported by China Postdoctoral Science Foundation 2024M761509. L. Yan was supported by NNSF
of China 12571111. The authors  thank X. Chen, N. Liu, L. Roncal, L. Song and H.-W. Zhang 
  for helpful  discussions and suggestions.

\bigskip

\bibliographystyle{plain}

\end{document}